\newcommand{\A}{\mathbf{A}}
\newcommand{\Z}{\mathbb{Z}}
\newcommand{\sC}{\mathcal{C}}
\newcommand{\sY}{\mathcal{Y}}
\newcommand{\Cor}{\operatorname{\mathbf{Cor}}}
\newcommand{\ul}[1]{{\underline{#1}}}
\newcommand{\Set}{{\operatorname{\mathbf{Set}}}}
\newcommand{\PST}{{\operatorname{\mathbf{PST}}}}
\newcommand{\NST}{\operatorname{\mathbf{NST}}}
\newcommand{\DM}{\operatorname{\mathbf{DM}}}
\newcommand{\Sm}{\operatorname{\mathbf{Sm}}}
\newcommand{\Sch}{\operatorname{\mathbf{Sch}}}
\newcommand{\tr}{{\operatorname{tr}}}
\newcommand{\eff}{{\operatorname{eff}}}
\newcommand{\fin}{{\operatorname{fin}}}
\renewcommand{\o}{{\operatorname{o}}}
\newcommand{\red}{{\operatorname{red}}}
\newcommand{\id}{{\operatorname{Id}}}
\newcommand{\pr}{{\operatorname{pr}}}
\renewcommand{\lim}{\operatornamewithlimits{\varprojlim}}
\newcommand{\ol}{\overline}
\renewcommand{\phi}{\varphi}
\renewcommand{\epsilon}{\varepsilon}
\newcommand{\MNST}{\operatorname{\mathbf{MNST}}}
\newcommand{\MCor}{\operatorname{\mathbf{MCor}}}
\newcommand{\MSm}{\operatorname{\mathbf{MSm}}}
\newcommand{\ulMCorel}{\operatorname{\mathbf{\underline{M}Cor}^{\mathrm{el}}}}
\newcommand{\Bl}{{\mathbf{Bl}}}
\newcommand{\Sets}{\operatorname{\mathbf{Sets}}}
\newcommand{\Sq}{{\operatorname{\mathbf{Sq}}}}
\newcommand{\ulMSm}{\operatorname{\mathbf{\underline{M}Sm}}}
\newcommand{\ulMNST}{\operatorname{\mathbf{\underline{M}NST}}}
\newcommand{\ulMCor}{\operatorname{\mathbf{\underline{M}Cor}}}
\newcommand{\ulMEt}{\operatorname{\mathbf{\underline{M}Et}}}
\newcommand{\MEt}{\operatorname{\mathbf{MEt}}}
\newcommand{\OD}{\mathrm{OD}}
\newcommand{\ctimes}{\times^\mathrm{c}}
\newcommand{\Comp}{\operatorname{\mathbf{Comp}}}
\newcommand{\MV}{\operatorname{MV}}
\newcommand{\ulMV}{\operatorname{\underline{MV}}}
\newcommand{\ulMVfin}{\operatorname{\underline{MV}^{\mathrm{fin}}}}
\newcounter{spec}
{\end{list}}%
\newtheorem{Th}{Theorem}
\newtheorem{lemma}{Lemma}[subsection]
\newtheorem{thm}[lemma]{Theorem}
\newtheorem*{thm*}{Theorem}
\newtheorem{prop}[lemma]{Proposition}
\newtheorem{cor}[lemma]{Corollary}
\newtheorem{corollary}[lemma]{Corollary}
\theoremstyle{definition}
\newtheorem{defn}[lemma]{Definition}
\theoremstyle{remark}
\newtheorem{remark}[lemma]{Remark}
\newtheorem{claim}[lemma]{Claim}
\numberwithin{equation}{subsection}
\def\Comp{\Comp^{\fin}}
\def\MSm{\operatorname{\mathbf{MSm}}}
\def\MSm{\operatorname{\mathbf{MSm}}}
\def\ulMSm{\operatorname{\mathbf{\ul{M}Sm}}}
\def\ulMNST{\operatorname{\mathbf{\ul{M}NST}}}
\def\MNST{\operatorname{\mathbf{MNST}}}
\def\Comp{\operatorname{\mathbf{Comp}}}
\begin{document}

\title[Nisnevich topology with modulus]
{Nisnevich topology with modulus}
\author[H. Miyazaki]{Hiroyasu Miyazaki}
\address{RIKEN iTHEMS, Wako, Saitama 351-0198, Japan}
\email{hiroyasu.miyazaki@riken.jp}
\date{\today}
\thanks{This work is supported by RIKEN Special Postdoctoral Researchers (SPDR) Program, by RIKEN Interdisciplinary Theoretical and Mathematical Sciences Program (iTHEMS), and by JSPS KAKENHI Grant (19K23413).
}

\begin{abstract}
In the theory of motives \`a la Voevodsky, the Nisnevich topology on smooth schemes is used as an important building block. 
In this paper, we introduce a Grothendieck topology on proper modulus pairs, which will be used to construct a non-homotopy invariant generalization of motives.
We also prove that the topology satisfies similar properties to the Nisnevich topology.
\end{abstract}

\subjclass[2010]{14F20 (18F10, 19E15, 14C25)}

\maketitle

\tableofcontents

\section{Introduction}\label{section:intro}
\setcounter{subsection}{1}

In the theory of motives \`a la Voevodsky in \cite{voetri}, the Nisnevich topology on the category of smooth schemes over a field $k$ plays a fundamental role. 
In this paper, we introduce a Grothendieck topology on proper modulus pairs, which will be used to construct a non-homotopy invariant generalization of motives.
We also prove that the topology satisfies similar properties to the Nisnevich topology.

A Nisnevich cover $f : Y \to X$ is an \'etale cover such that any point $x \in X$ admits a point $y \in Y$ with $f(y) = x$ and $k(y) = k(x)$. 
Therefore, the Nisnevich topology is finer than the Zariski topology and is coarser than the \'etale topology.
Voevodsky defined the category of effective motives $\DM^\eff$ as the derived category of the abelian category of Nisnevich sheaves with transfers $\NST$, modulo $\A^1$-homotopy invariance:
\begin{equation}\label{eq:DM}
\DM^\eff := \frac{\mathbf{D}(\NST )}{(\text{$\A^1$-homotopy invariance})}.
\end{equation}

We briefly recall the definition of $\NST$. 
Let $\PST$ be the category of additive abelian presheaves on the category of finite correspondences $\Cor$.
Then we have a natural functor $\Sm \to \Cor$, where $\Sm$ denotes the category of smooth schemes over $k$.
Then $\NST$ is defined to be the full subcategory of $\PST$ which consists of $F \in \PST$ such that the restriction $F|_{\Sm}$ is a Nisnevich sheaf on $\Sm$.

The definition of $\NST$ is simple, but it is non-trivial that $\NST$ is an abelian category. 
It follows from the existence of a left adjoint to the inclusion functor $\NST \to \PST$.
A key ingredient of the proof of its existence is the following fact: 
for any Nisnevich cover $U \to X$, the following \v{C}ech complex is exact as a complex of Nisnevich sheaves:
\begin{equation*}
\cdots \xrightarrow{} \Z_\tr (U \times_X U) \xrightarrow{} \Z_\tr (U) \xrightarrow{} \Z_\tr (X) \to 0,
\end{equation*}
where $\Z_\tr (-): \Cor \to \PST$ denotes the Yoneda embedding (see for example \cite[Prop. 6.12]{mvw}). 
Moreover, the Nisnevich topology is \emph{subcanonical}, i.e., every representable presheaf in $\Sm$ is a sheaf.

The category of motives $\DM^\eff$ has provided vast applications to the study of arithmetic geometry, but on the other hand, it has a fundamental constraint that it cannot capture {\it non-$\A^1$-homotopy invariant phenomena}, e.g., wild ramification.
Indeed, the arithmetic fundamental group $\pi_1 (X)$, which captures the information of ramifications, is not $\A^1$-homotopy invariant. 

An attempt to develop a theory of motives which captures non-$\A^1$-homotopy invariant phenomena started in \cite{motmod}.
The strategy is to extend Voevodsky's theory to \emph{modulus pairs}.
A \emph{modulus pair} is a pair $M=(\ol{M},M^\infty)$ of a scheme $\ol{M}$ and an effective Cartier divisor $M^\infty$ on $\ol{M}$ such that the \emph{interior} $M^\o := \ol{M} - M^\infty$ is smooth over $k$.
We can define a reasonable notion of morphisms between modulus pairs, and we obtain a category of modulus pairs $\ulMSm$.
A modulus pair $M$ is \emph{proper} if $\ol{M}$ is proper over $k$, and we denote by $\MSm$ the full subcategory of $\ulMSm$ consisting of proper modulus pairs
 (see Definition \ref{def:mod-pair} for details).

These categories embed in categories of ``modulus correspondences'' $\ulMCor$ and $\MCor$, just as $\Sm$ embeds in $\Cor$ (see Definition \ref{def:MCor}).
In \cite{motmod}, categories of ``modulus sheaves with transfers'' $\ulMNST$ (relative to $\ulMCor$) and $\MNST$ (relative to $\MCor$) were introduced, in order to parallel the definition of \eqref{eq:DM}. However, the proof that these categories are abelian was found to contain a gap.  This gap was filled in \cite{modsheaf1} for $\ulMNST$,  by showing that its objects are indeed the sheaves with transfers for a suitable Grothendieck topology on $\ulMSm$.

In this paper, we construct a Gro\-then\-dieck topology on $\MSm$ with nice properties. It will be shown in \cite{modsheaf2}, using \cite{cofinality}, that the objects of $\MNST$ are the sheaves (with transfers) for this topology and that this category is abelian. Thus the present paper contains the tools to finish filling the gap of \cite{motmod}. Moreover, we prove an important exactness result.

Our guide is the following characterization of the Nisnevich topology on $\Sm$:
the Nisnevich topology is generated by coverings $U \sqcup V \to X$ associated with some commutative square $S$ in $\Sm$ of the form 
\[\xymatrix{
W \ar[r] \ar[d] & V \ar[d] \\
U \ar[r] & X
}\]
which satisfies the following properties:
\begin{enumerate}
\item $S$ is a cartesian square,
\item the horizontal morphisms are open immersions,
\item the vertical morphisms are \'etale, and
\item the morphism $(V-W)_{\red} \to (X-U)_{\red}$ is an isomorphism.
\end{enumerate}

Such squares are called \emph{elementary Nisnevich squares}.
Elementary Nisnevich squares form a \emph{cd-structure} on $\Sm$ in the sense of \cite{cdstructures}.
A remarkable property of the Nisnevich cd-structure is the following fact:
a presheaf of sets $F$ on $\Sm$ is a Nisnevich sheaf if and only if $F(\emptyset) = \{\ast\}$ and for any elementary Nisnevich square as above, the square
\[\xymatrix{
F(X) \ar[r] \ar[d] & F(U) \ar[d] \\
F(V) \ar[r] & F(W)
}\]
is cartesian. 
This equivalence holds for any cd-structure which is complete and regular (see \cite[Def. 2.3, 2.10, Cor. 2.17]{cdstructures}). 

In \cite{modsheaf1}, a cd-structure on $\ulMSm$ is introduced. 
It is denoted $P_{\ulMV}$, and satisfies properties similar to elementary Nisnevich squares. Its definition will be recalled in \S 4.1. For short, we call the topology on $\ulMSm$ associated with $P_{\ulMV}$ the \emph{$\ulMV$-topology.}

\ 

Our main result is the following.

\begin{Th}
The category of proper modulus pairs $\MSm$ admits a cd-structure $P_{\MV}$ such that the following assertions hold:
For short, we call the topology associated with $P_{\MV}$ the $\MV$-topology. 
\begin{enumerate}
\item  (see Theorem \ref{thm:completeness}, \ref{thm:regularity}, \ref{thm:sheaf-criterion})
The cd-structure $P_{\MV}$ is complete and regular. 
In particular, a presheaf of sets $F$ on $\MSm$ is a sheaf for the $\MV$-topology if and only if $F(\emptyset ) = \{\ast \}$ and for any square $T \in P_{\MV}$ of the form 
\[\xymatrix{
W \ar[r] \ar[d] & V \ar[d] \\
U \ar[r] & M,
}\]
the square 
\[\xymatrix{
F(M) \ar[r] \ar[d] & F(U) \ar[d] \\
F(V) \ar[r] & F(W)
}\]
is cartesian.
\item  (see Theorem \ref{thm:subcanonicality})
The $\ulMV$-topology and the $\MV$-topology are subcanonical.
\item (see Corollary \ref{cor:Mayer-Vietoris})
For any $M \in \ulMSm$, consider the preshaf $\Z_\tr (M)$ on $\ulMCor$ represented by $M$, which is a sheaf for the $\ulMV$-topology by \cite[Th. 2 (2)]{modsheaf1}.
Then, for any square as above, the following complex of sheaves for the $\ulMV$-topology is exact:
\[
0 \to \Z_\tr (W) \xrightarrow{} \Z_\tr (U) \oplus  \Z_\tr (V)\xrightarrow{} \Z_\tr (M) \to 0 .
\]
\end{enumerate}
\end{Th}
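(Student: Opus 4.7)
The overall plan is to bootstrap from the $\ulMV$-topology on $\ulMSm$ (established in \cite{modsheaf1}), exploiting that $\MSm$ sits as a full subcategory of $\ulMSm$ and that the definition of $P_{\MV}$ is set up to be compatible with $P_{\ulMV}$ --- essentially, a $P_{\MV}$-square should be a $P_{\ulMV}$-square whose four corners happen to be proper modulus pairs. With this in hand, I would verify assertion (1) by a direct check of completeness and regularity as formulated in \cite[Def. 2.3, 2.10]{cdstructures}. Completeness asks that every $P_{\MV}$-covering admit a refinement by a ``simple'' covering (a chain of elementary squares); regularity is a diagonal-comparison condition. In both cases, I would reduce to the corresponding statement for $P_{\ulMV}$, which is already available, and then verify that all auxiliary modulus pairs produced in the argument actually lie in $\MSm$; this is plausible because they are constructed from pullbacks of proper ambient schemes along morphisms appearing in the original square. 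The sheaf criterion in the cartesian-square form is then formal from \cite[Cor. 2.17]{cdstructures}.

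For assertion (2), I would show that for each $N \in \ulMSm$ and each elementary $P_{\ulMV}$-square with corners $W, U, V, M$, the canonical map
\[
\Hom_{\ulMSm}(M, N) \to \Hom_{\ulMSm}(U, N) \times_{\Hom_{\ulMSm}(W, N)} \Hom_{\ulMSm}(V, N)
\]
is a bijection. Given a compatible pair $(u \colon U \to N,\; v \colon V \to N)$, scheme-theoretic Nisnevich descent on the ambient squares supplies a unique morphism $\ol{m} \colon \ol{M} \to \ol{N}$. The remaining point is to verify the modulus inequality $\ol{m}^\ast N^\infty \leq M^\infty$, which may be checked locally on the \'etale covering $\{\ol{U}, \ol{V}\} \to \ol{M}$ and then follows at once from the fact that $u$ and $v$ are already morphisms in $\ulMSm$. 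Subcanonicality for the $\MV$-topology follows by restriction, once (1) is in hand and identifies the $\MV$-sheaf condition with the cartesian-square criterion.

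Assertion (3) is a formal consequence of (1) and (2). For exactness in the middle and on the right, I would apply $\uHom_{\ulMPST}(-, F)$ for a generic $\ulMV$-sheaf with transfers $F$ and use Yoneda to identify the resulting diagram with the sheaf-condition square for $F$ evaluated on the given elementary $P_{\ulMV}$-square. By (1) this diagram is cartesian, which is precisely the statement that the cokernel of $\Z_\tr(W) \to \Z_\tr(U) \oplus \Z_\tr(V)$ equals $\Z_\tr(M)$ as a $\ulMV$-sheaf. Exactness on the left reduces to injectivity of $\Z_\tr(W) \to \Z_\tr(U)$, which holds because the ambient-scheme map $\ol{W} \to \ol{U}$ is an open immersion in the elementary-square axioms and hence induces a monomorphism on sections of any presheaf with transfers.

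The main obstacle is expected to be assertion (1), specifically verifying that the combinatorial manipulations demanded by the cd-structure axioms --- iterated fibre products and diagonal comparisons --- can be performed inside $\MSm$ and not only in $\ulMSm$. Unwinding the compactification construction underlying the category of proper modulus pairs, and tracking properness of ambient schemes through the relevant pullbacks, will form the technical heart of the argument; the very constrained shape of an elementary square (an open-immersion part together with an ``abstract blow-up'' part modelled on the Nisnevich case) should nonetheless make this achievable.
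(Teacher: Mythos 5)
There are two genuine gaps here, and they are the two places where the paper has to do real work. First, your starting premise --- that a $P_{\MV}$-square is ``essentially a $P_{\ulMV}$-square whose four corners happen to be proper modulus pairs'' --- does not survive inspection. An $\ulMV$-square has $\ol{S}(10)\to\ol{S}(11)$ an open immersion of ambient schemes; if both ends are proper this forces a clopen decomposition, so such squares with all proper corners are essentially degenerate and cannot generate the intended topology. The paper instead defines an $\MV$-square $T$ by three conditions: it is a pull-back in $\MSm$, it receives a map from an $\ulMV$-square $S$ inducing isomorphisms $S^\o\cong T^\o$ and $S(11)\cong T(11)$ (so $U,V$ are \emph{compactifications} of the open/\'etale pieces, not open subobjects), and the off-diagonals satisfy $\OD(q_T)\cong\OD(p_T)$, where $\OD$ is the functor of Theorem \ref{thm:def-OD} splitting $M\times_N M\cong M\sqcup\OD(f)$. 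This last condition is the intrinsic replacement for ``$(V-W)_{\red}\cong(X-U)_{\red}$'' and is exactly what makes the regularity axiom work: the derived diagonal square $d(T)$ is identified with a square built from $T(0i)\sqcup\OD(\cdot)$, and one needs $\OD(q_T)\cong\OD(p_T)$ to see that $d(T)$ is again in $P_{\MV}$ (with a witnessing $\ulMV$-square $d(T)_0$ that uses the non-proper pair $(T(00)^\o,\emptyset)$). None of this is a ``reduction to the corresponding statement for $P_{\ulMV}$''; your plan omits the off-diagonal machinery entirely, and without it you cannot even state, let alone verify, the regularity condition. (Your sketch of subcanonicality is closer to the mark, though the modulus inequality is not checked ``locally on an \'etale cover'': the paper pulls everything back to the normalized graph of the rational map $\ol{S}(11)\dashrightarrow\ol{M}$ and invokes the Krishna--Park descent lemma for Cartier divisors along the surjection $\ol{S}_1(10)\sqcup\ol{S}_1(01)\to\ol{S}_1(11)$.)

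Second, assertion (3) is emphatically not ``a formal consequence of (1) and (2).'' What is formal from (1), (2) and \cite[Lemma 2.18]{cdstructures} is the exactness of $0\to\Z(T(00))\to\Z(T(10))\oplus\Z(T(01))\to\Z(T(11))\to0$ for the presheaves of sets represented in $\MSm$ (Theorem \ref{thm:easy-MV}). But $\Z_\tr(N)(M)=\ulMCor(M,N)$ is the free abelian group on admissible \emph{finite correspondences}, and cartesianness of the square of $\Hom$-sets in $\MSm$ says nothing about cartesianness of the square of correspondence groups. Your Yoneda argument with $\uHom(-,F)$ only detects the sheafification in the transfer-free world. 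The middle exactness of \eqref{eq:exactness-tr} is the substantive Theorem \ref{thm:exactness}: given $\alpha=\sum m_i\alpha_i$ with $p_{T*}(\alpha)$ supported on $T(10)$, one must handle cancellation among distinct elementary components $\alpha_i,\alpha_j$ with the same image, which is done via Lemma \ref{lem:resurgence} --- lifting the pair to an elementary correspondence into $T(01)\times_{T(11)}T(01)\cong T(01)\sqcup\OD(p_T)$ (Proposition \ref{prop:fb-fc}, whose surjectivity holds only at the level of \emph{elementary} correspondences, as the paper's remark warns) and then transporting it across $\OD(p_T)\cong\OD(q_T)$. Surjectivity on the right likewise is not obtained from representable-presheaf descent but from the known epimorphism for the auxiliary $\ulMV$-square $S$ in $\ulMNST$ \cite[Theorem 4.5.7]{modsheaf1}, factored through $T$. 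Your proposal as written would establish only the transfer-free Mayer--Vietoris sequence.
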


The organization of the paper is as follows. 
In \S \ref{section:basics}, we recall basic definitions and results on modulus pairs from \cite{modsheaf1}.
In \S \ref{section-OD}, we introduce ``the off-diagonal functor'', which is a key ingredient to define the cd-structure on the category of proper modulus pairs. 
In \S \ref{section:cd-structure}, we define the cd-structure on the category of proper modulus pairs, and prove that it satisfies completeness and regularity. 
Finally, in \S \ref{section:Mayer-Vietoris}, we prove the exactness of the Mayer-Vietroris sequences associated with the distinguished squares with respect to the cd-structure.

\bigskip
\subsection*{Acknowledgements}
The author thanks Shuji Saito deeply for many helpful discussions on the first draft of the paper. 
It enabled the author to find a simple proof of Theorem \ref{thm:exactness}.
The author's gratitude also goes to Bruno Kahn who encouraged him to find a conceptual formulation of the cd-structure, which led to a considerable improvement of the paper. 
The author also thanks the referee for correcting some errors in the first version of the paper, and for providing helpful suggestions.

\bigskip
\subsection*{Notation and convention}
Throughout the paper, we fix a base field $k$. 
Let $\Sm$ be the category of separated smooth schemes of finite type over $k$,
and let $\Sch$ be the category of separated schemes of finite type over $k$.
For any scheme $X$ and for any closed subscheme $F \subset X$, we denote by $\Bl_F (X)$ the blow-up of $X$ along $F$.

\section{Basics on modulus pairs}\label{section:basics}

In this section, we introduce basic notions which we use throughout the paper.

\subsection{Category of modulus pairs}

We recall basic definitions on modulus pairs, introduced in \cite{modsheaf1}.
We will also introduce new notations.
Especially, \emph{the canonical model of fiber product} is often useful (see Definition \ref{def:canonical-model}). 
Though our main interest in this paper is on \emph{proper} modulus pairs, we introduce the general definition of modulus pairs for later use. 

\begin{defn}\label{def:mod-pair}
\ 

\begin{enumerate}
\item \label{def:mod-pair1}
A \emph{modulus pair} is a pair $M = (\ol{M},M^\infty)$ consisting of a scheme $\ol{M} \in \Sch$ and an effective Cartier divisor $M^\infty$ on $\ol{M}$ such that 
\begin{itemize}
\item \emph{the ambient space} $\ol{M} \in \Sch$, and
\item \emph{the modulus divisor} $M^\infty$, i.e., an effective Cartier divisor on $\ol{M}$
\end{itemize}
such that 
\begin{itemize}
\item \emph{the interior} $M^\o := \ol{M} \setminus |M^\infty |$ belongs to $\Sm$, where $|M^\infty|$ denotes the support of $M^\infty$.
\end{itemize}
Note that $M^\o$ is a dense open subset of $\ol{M}$. 
Moreover, we can prove that $\ol{M}$ must be a reduced scheme by using the smoothness of $M^\o$ and the assumption that $M^\infty$ is an effective Cartier divisor. 

\item \label{def:mod-pair2}
A modulus pair $M$ is called \emph{proper} if the ambient space $\ol{M}$ is proper over $k$.

\item \label{def:mod-pair3}
An \emph{admissible morphism} $f : M \to N$ of modulus pairs is a morphism between the interiors $f^\o : M^\o \to N^\o$ in $\Sm$ which satisfies 
\emph{the properness condition}:
\begin{itemize} 
\item Let $\Gamma$ be the graph of the rational map $\ol{f} : \ol{M} \dashrightarrow \ol{N}$ which is induced by $f^\o$. 
Then the natural morphism $\Gamma \to \ol{M}$ is proper.
\end{itemize}
and \emph{the modulus condition}:
\begin{itemize}
\item Let $\Gamma^N$ be the normalization of $\Gamma$. 
Then we have the following inequality 
\[
M^\infty |_{\Gamma^N} \geq N^\infty |_{\Gamma^N},
\]
of effective Cartier divisors on $\Gamma^N$, where $M^\infty |_{\Gamma^N}$ and $N^\infty |_{\Gamma^N}$ denote the pullbacks $M^\infty$ and $N^\infty$ along the natural morphisms $\Gamma^N \to \ol{M}$ and $\Gamma^N \to \ol{N}$. 
Note that the pullbacks are defined since the rational map $\ol{f}$ restricts to a morphism $f^\o$, and since $M^\o$ is dense in $\ol{M}$.
\end{itemize}
If $f : M \to N$ and $g : N \to L$ are admissible morphisms, then the composite $g^\o \circ f^\o : M^\o \to L^\o$ defines an admissible morphism $M \to N$ (cf. \cite{modsheaf1}).
If $N$ is proper, then the properness condition above is always satisfied. 

\item \label{def:mod-pair4}
We let $\ulMSm$ denote the category whose objects are modulus pairs and whose morphisms are admissible morphisms. 
The full subcategory of $\ulMSm$ consisting of proper modulus pairs is denoted by $\MSm$. 

\item \label{def:mod-pair5}
A morphism $f : M \to N$ in $\ulMSm$ is called \emph{ambient} if $f^\o : M^\o \to N^\o$ extends to a morphism $\ol{M} \to \ol{N}$ in $\Sch$. 
Such an extension is unique since $\ol{M}$ is reduced, $M^\o$ is dense in $\ol{M}$, and $\ol{N}$ is separated. 
We let $\ulMSm^\fin$ (resp. $\MSm^\fin$) denote the (non-full) subcategory of $\ulMSm$ (resp. $\MSm$) whose objects are modulus pairs (resp. proper modulus pairs) and whose morphisms are ambient morphisms. 

\item \label{def:mod-pair6}
A morphism $f : M \to N$ in $\ulMSm$ is called \emph{minimal} if $f$ is ambient and satisfies $M^\infty = \ol{f}^\ast N^\infty$.

\item \label{def:mod-pair7}
We let $\ul{\Sigma}_\fin$ denote the subcategory of $\ulMSm$ whose objects are the same as $\ulMSm$ and whose morphisms are those morphisms $f : M \to N$ in $\ulMSm^\fin$ such that $f$ is minimal, $\ol{f} : \ol{M} \to \ol{N}$ is proper and $f^\o : M^\o \to N^\o$ is an isomorphism in $\Sm$.
Then the canonical functor $\ulMSm^\fin \to \ulMSm$ induces an equivalence of categories $\ul{\Sigma}_\fin^{-1} \ulMSm^\fin \xrightarrow{\simeq} \ulMSm$ \cite[Prop. 1.9.2]{modsheaf1}.

\item \label{def:mod-pair8}
Let $\Sq$ be the product category $[0] \times [0]$, where $[0]= \{0 \to 1\}$.
For any category $\mathcal{C}$, we define $\mathcal{C}^\Sq$ to be the category of functors from $\Sq$ to $\mathcal{C}$.
An object $T$ of $\mathcal{C}^\Sq$ is given by a commutative diagram
\[\xymatrix{
T(00) \ar[r] \ar[d] & T(01) \ar[d] \\
T(10) \ar[r] & T(11).
}\]
in $\mathcal{C}$, and a morphism $T_1 \to T_2$ in $\mathcal{C}^\Sq$ is given by a set of morphisms $T_1 (ij) \to T_2 (ij)$, $i,j=0,1$, which are compatible with all the edges of the squares.

\item \label{def:mod-pair9}
A morphism $T_1 \to T_2$ in $\ulMSm^\Sq$ is called \emph{ambient} if for any $i,j=0,1$, the morphisms $T_1 (ij) \to T_2 (ij)$ in $\ulMSm$ are ambient.
A square $T \in \ulMSm^\Sq$ is called \emph{ambient} if it is contained in $(\ulMSm^\fin )^\Sq \subset \ulMSm^\Sq$.
\end{enumerate}
\end{defn}

The following lemma is often useful.

\begin{lemma}\label{lem:ambient-diagram}
For any square $T \in \ulMSm^\Sq$, there exists an ambient square $T'$ which admits an ambient morphism $T' \to T$ which is an isomorphism in $\ulMSm^\Sq$.
\end{lemma}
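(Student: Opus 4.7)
The plan is to construct $T'$ by replacing each vertex of $T$ with the graph of the combined rational map encoding all admissible morphisms emanating from it. For $(ij) \in \Sq$, let $S(ij) \subset \Sq$ denote the set of strict successors of $(ij)$, so $S(00) = \{(01),(10),(11)\}$, $S(01) = S(10) = \{(11)\}$, and $S(11) = \emptyset$. Writing $M_{ij} := T(ij)$, the admissible morphisms in $T$ induce a locally closed immersion
\[
M_{ij}^\o \hookrightarrow \ol{M}_{ij} \times \prod_{(kl) \in S(ij)} \ol{M}_{kl}.
\]
I let $\ol{M}'_{ij}$ be its scheme-theoretic closure (so $\ol{M}'_{11} = \ol{M}_{11}$), equipped with modulus divisor $(M'_{ij})^\infty := p_{ij}^*\, M_{ij}^\infty$, where $p_{ij} : \ol{M}'_{ij} \to \ol{M}_{ij}$ is the first projection. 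The interior of $M'_{ij}$ agrees with $M_{ij}^\o$, so $M'_{ij}$ is a well-defined modulus pair, and I set $T'(ij) := M'_{ij}$.

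First, each $p_{ij}$ is projective, birational, minimal, and induces an isomorphism on interiors, so the corresponding morphism $M'_{ij} \to M_{ij}$ in $\ulMSm$ belongs to $\ul{\Sigma}_\fin$ and is in particular an isomorphism in $\ulMSm$. Second, for any edge $(ij) \to (kl)$ in $\Sq$, the inclusion $\{(kl)\} \cup S(kl) \subseteq S(ij)$ together with the commutativity of $T$ implies that the obvious coordinate projection carries the image of $M_{ij}^\o$ into that of $M_{kl}^\o$; passing to closures then yields an ambient morphism $\ol{M}'_{ij} \to \ol{M}'_{kl}$ which restricts on interiors to the given morphism of $T$. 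Commutativity of $T'$ and the naturality of $T' \to T$ are automatic from the naturality of product projections.

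The only step requiring genuine care is the modulus condition for each new ambient edge $\ol{a}' \colon \ol{M}'_{ij} \to \ol{M}'_{kl}$. I let $\Gamma \subset \ol{M}_{ij} \times \ol{M}_{kl}$ be the graph of the rational map induced by the original edge $a \colon M_{ij} \to M_{kl}$. The projection to the two coordinates $(ij),(kl)$ factors as $\ol{M}'_{ij} \to \Gamma \hookrightarrow \ol{M}_{ij} \times \ol{M}_{kl}$, and passing to normalizations yields a map $(\ol{M}'_{ij})^N \to \Gamma^N$. Pulling back the modulus inequality $M_{ij}^\infty|_{\Gamma^N} \geq M_{kl}^\infty|_{\Gamma^N}$, which is precisely the modulus condition for $a$, along this map gives the required inequality $(M'_{ij})^\infty|_{(\ol{M}'_{ij})^N} \geq \ol{a}'^*(M'_{kl})^\infty|_{(\ol{M}'_{ij})^N}$, since $\ol{a}'$ is already a morphism and its graph is $\ol{M}'_{ij}$ itself. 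This is the main technical point, but it is a formal consequence of the universal property of normalization; the conceptual content of the lemma is that a single graph closure simultaneously ambient-resolves all admissible morphisms emanating from a fixed vertex.
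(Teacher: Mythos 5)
Your construction is precisely the ``repeated use of the graph trick'' that the paper invokes (via \cite[Lemma 1.3.6]{modsheaf1}), packaged efficiently by taking at each vertex the closure of the simultaneous graph of all outgoing edges; the argument is correct and fills in the details the paper leaves to the reader. The only wrinkle is that $p_{ij}$ is proper rather than (obviously) projective --- properness holds because $\ol{M}'_{ij}$ is closed in the fiber product over $\ol{M}_{ij}$ of the individual graphs $\Gamma_{kl}$, each proper over $\ol{M}_{ij}$ by the properness condition on the edges of $T$ --- and properness is all that membership in $\ul{\Sigma}_\fin$ requires.
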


\begin{proof}
This is just a consequence of a repeated use of the graph trick (\cite[Lemma 1.3.6]{modsheaf1}).
Or the reader can consult the calculus of fractions in \cite[Prop. 1.9.2]{modsheaf1}). The details are left to the reader.
\end{proof}

\subsection{Fiber products}

We discuss fiber products in $\ulMSm$ and $\MSm$. 

\begin{lemma}\label{lem:univ-sup}
Let $X$ be a scheme, and let $D_1$ and $D_2$ be effective Cartier divisors on $X$.
Assume that the scheme-theoretic intersection $\inf (D_1 ,D_2) := D_1 \times_X D_2$ is also an effective Cartier divisor on $X$.
Set $X^\infty := D_1 + D_2 - \inf (D_1 ,D_2)$.

Then, for any morphism $f : Y \to X$ in $\Sch$ such that $Y$ is normal and the image of any irreducible component of $Y$ is not contained in $|X^\infty| = |D_1| \cup |D_2|$, we have 
\[
f^\ast X^\infty = \sup (f^\ast D_1 , f^\ast D_2),
\]
where $\sup$ is the supremum of Weil divisors on the normal scheme $Y$.
\end{lemma}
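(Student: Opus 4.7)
The plan is to localize the problem on $X$ and $Y$ and then reduce everything to a multiplicity computation at each prime divisor of $Y$, exploiting the fact that the Cartier hypothesis on $\inf(D_1,D_2)$ makes the local defining equations of $D_1,D_2$ coprime after dividing by their ``greatest common factor''.

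First I would pass to the affine case: the assertion is local on $X$ and $Y$, so I may assume $X=\Spec A$ and that $D_1,D_2$ are cut out by $f_1,f_2\in A$.  The Cartier assumption on $\inf(D_1,D_2)=D_1\times_X D_2$ means that $(f_1,f_2)\subset A$ is locally principal; after further shrinking I write $(f_1,f_2)=(g)$ with $f_1=ga$ and $f_2=gb$ for elements $a,b\in A$ satisfying $(a,b)=A$.  Then $X^\infty=D_1+D_2-\inf(D_1,D_2)$ is defined locally by $gab$.

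The hypothesis that no irreducible component of $Y$ is contained in $|D_1|\cup|D_2|$ guarantees that the images of $f_1,f_2,g,a,b,gab$ in $\sO_Y$ are non-zero-divisors, so their associated Cartier divisors pull back correctly and, by normality of $Y$, may be compared with Weil divisors.  The main calculation is: for each prime divisor $V\subset Y$ with associated valuation $v_V$ on $\sO_{Y,V}$, the relation $(a,b)=A$ passes to $(f^*a,f^*b)=\sO_{Y,V}$, which forces $\min(v_V(f^*a),v_V(f^*b))=0$, and hence
\[
\max(v_V(f^*a),v_V(f^*b))=v_V(f^*a)+v_V(f^*b)=v_V(f^*(ab)).
\]
Adding $v_V(f^*g)$ to both sides and using $f_1=ga$, $f_2=gb$ yields
\[
\max(v_V(f^*f_1),v_V(f^*f_2))=v_V(f^*(gab)),
\]
which says exactly that the multiplicities of $\sup(f^*D_1,f^*D_2)$ and of $f^*X^\infty$ at $V$ coincide.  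Since this holds for every prime divisor $V\subset Y$, the lemma follows.

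I do not expect a serious obstacle: the only subtlety is to check that all pullbacks are well-defined as Cartier divisors before passing to Weil-divisor language, but this is exactly what the assumption on the irreducible components of $Y$ provides.  The decomposition $f_i=g\cdot(a\text{ or }b)$ into a common factor $g$ and a coprime pair $(a,b)$ is the conceptual point that makes the identity $\max=\text{sum minus }\min$ translate into the identity of divisors being claimed.
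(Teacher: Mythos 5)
Your proof is correct and follows essentially the same route as the paper's (which is only a sketch deferring details to \cite{modsheaf1}): the paper's observation that $D_1-\inf(D_1,D_2)$ and $D_2-\inf(D_1,D_2)$ are disjoint is exactly your local factorization $f_i=g\cdot(a\text{ or }b)$ with $(a,b)=A$, from which $\max=\text{sum}-\min$ with $\min=0$ at every prime divisor. Your valuation-by-valuation verification on $Y$ is a legitimate, slightly more explicit execution of the same idea.
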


\begin{proof}
Since $\inf (D_1,D_2) \times_X Y = \inf (f^\ast D_1,f^\ast D_2)$, we are reduced to the case $X=Y$.
Moreover, an easy local computation shows that $D_1 - \inf (D_1,D_2)$ and $D_2 - \inf (D_1,D_2)$ do not intersect. The assertion immediately follows from this.
See \cite[Lem. 1.10.1, Def. 1.10.2 Rem. 1.10.3]{modsheaf1} for more details.
\end{proof}

\begin{defn}\label{def:canonical-model}
Let $f_1 : M_1 \to N$ and $f_2 : M_2 \to N$ be morphisms in $\ulMSm^\fin$, and
assume that the fiber product $P^\o := M_1^\o \times_{N^\o} M_2^\o$ exists in $\Sm$.
We define a modulus pair $P$ as follows. 
Let $\ol{P}_0$ be the scheme-theoretic closure of $P^\o$ in $\ol{M} \times_{\ol{N}} \ol{M}_2$,
and let $\ol{p}_{0,i} : \ol{P}_0 \to \ol{M}_1 \times_{\ol{N}} \ol{M}_2 \xrightarrow{\pr_i} \ol{M}_i$ be the composite of the closed immersion followed by the $i$-th projection for $i=1,2$.
Let
\[
\ol{P} := \Bl_{(\ol{p}_{0,1}^\ast M_1^\infty ) \times_{\ol{P}_0} (\ol{p}_{0,2}^\ast M_2^\infty )} (\ol{P}_0) \xrightarrow{\pi_P} \ol{P}_0
\]
be the blow-up of $\ol{P}_0$ along the closed subscheme $(\ol{p}_{0,1}^\ast M_1^\infty ) \times_{\ol{P}_0} (\ol{p}_{0,2}^\ast M_2^\infty )$. 
Set 
\[
P^\infty := \pi_P^\ast \ol{p}_{0,1}^\ast M_1^\infty + \pi_P^\ast \ol{p}_{0,2}^\ast M_2^\infty - E,
\]
where $E := \pi_P^{-1} ((\ol{p}_{0,1}^\ast M_1^\infty ) \times_{\ol{P}_0} (\ol{p}_{0,2}^\ast M_2^\infty ))$ denotes the exceptional divisor. 
Then we have $\ol{P} - |P^\infty | = P^\o \in \Sm$ by construction, and we obtain a modulus pair $P=(\ol{P},P^\infty)$.

We call $P$ \emph{the canonical model of fiber product of $f_1$ and $f_2$}, and we often write 
\[
M_1 \ctimes_N M_2 := P.
\] 
By construction, we have a commutative diagram 
\[\xymatrix{
M_1 \ctimes_N M_2 \ar[r]^{p_2} \ar[d]_{p_1} & M_2 \ar[d]^{f_2} \\
M_1 \ar[r]_{f_1} & N
}\]
in $\ulMSm^\fin$.
Moverover, we have $(M_1 \ctimes_N M_2)^\o \cong M_1^\o \times_{N^\o} M_2^\o$.
\end{defn}

\begin{thm}\label{thm:fiber-product-fin}
Let $f_1 : M_1 \to N$ and $f_2 : M_2 \to N$ be morphisms in $\ulMSm^\fin$.
Assume that the fiber product $M_1^\o \times_{N^\o} M_2^\o$ exists in $\Sm$.
Then the canonical model of fiber product $M_1 \ctimes_N M_2$ represents the fiber product $M_1 \times_N M_2$ in $\ulMSm$.
Moreover, if $M_1, M_2$, $N$ are proper, then $M_1 \ctimes_N M_2$ (hence $M_1 \times_N M_2$) is proper. 
\end{thm}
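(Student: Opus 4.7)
The plan is to verify the universal property of the fiber product in $\ulMSm$ directly. Uniqueness of a lift $g : L \to P$ is automatic, because admissible morphisms are determined by their interiors and by construction $P^\o = M_1^\o \times_{N^\o} M_2^\o$; so the entire argument concentrates on constructing $g$ and on checking that $p_1, p_2 : P \to M_i$ are themselves admissible.

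The first step is to show that $p_1, p_2$ belong to $\ulMSm^\fin$. They are ambient by construction, so only the modulus inequality needs to be verified. Writing $D_i := \ol{p}_{0,i}^\ast M_i^\infty$ and $E$ for the exceptional divisor, we have $D_1 \times_{\ol{P}_0} D_2 \subset D_i$ as closed subschemes, so $E = \pi_P^{-1}(D_1 \times_{\ol{P}_0} D_2) \leq \pi_P^\ast D_i$ as effective Cartier divisors on $\ol{P}$ for $i = 1, 2$. The defining formula $P^\infty = \pi_P^\ast D_1 + \pi_P^\ast D_2 - E$ then gives $P^\infty \geq \pi_P^\ast D_i = p_i^\ast M_i^\infty$, which is the modulus condition.

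For existence of the lift, suppose $g_1 : L \to M_1$ and $g_2 : L \to M_2$ are admissible with $f_1 g_1 = f_2 g_2$. Completing the span into a commutative square in $\ulMSm^\Sq$ with bottom-right corner $N$ and applying Lemma \ref{lem:ambient-diagram}, I may assume $g_1$ and $g_2$ are ambient, at the cost of replacing $L$ by an isomorphic object in $\ulMSm$. Since $\ol{L}$ is reduced, $L^\o$ is dense in $\ol{L}$, and its image under $(\ol{g_1}, \ol{g_2})$ lies in $P^\o \subset \ol{P}_0$, the induced map $\ol{L} \to \ol{M}_1 \times_{\ol{N}} \ol{M}_2$ factors through $\ol{P}_0$. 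To lift further through $\pi_P$, I would blow up $\ol{L}$ along the pullback ideal $J = \ol{g_1}^\ast I_{M_1^\infty} + \ol{g_2}^\ast I_{M_2^\infty}$, which defines the scheme-theoretic intersection $\ol{g_1}^\ast M_1^\infty \cap \ol{g_2}^\ast M_2^\infty$. The resulting blow-up $\pi_L : \ol{L'} \to \ol{L}$ is an isomorphism over $L^\o$, because $V(J) \subset |L^\infty|$ by admissibility of $g_1$ and $g_2$; equipping $\ol{L'}$ with the pullback modulus ${L'}^\infty := \pi_L^\ast L^\infty$ therefore produces a modulus pair $L'$ together with a morphism $L' \to L$ in $\ul{\Sigma}_\fin$. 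The universal property of the blow-up then provides an ambient morphism $g : L' \to P$ extending $(g_1^\o, g_2^\o)$, and compositions $p_i \circ g = g_i$ hold on interiors, hence in $\ulMSm$.

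To finish, I would verify the modulus condition for $g$. Passing to the normalization $(\ol{L'})^N$, Lemma \ref{lem:univ-sup} identifies the pullback of $P^\infty$ with $\sup(\ol{g_1}^\ast M_1^\infty, \ol{g_2}^\ast M_2^\infty)|_{(\ol{L'})^N}$ as Weil divisors; since each term is bounded by ${L'}^\infty|_{(\ol{L'})^N}$ by admissibility of $g_1$ and $g_2$, so is the supremum. The properness statement is then immediate: $\ol{M}_1 \times_{\ol{N}} \ol{M}_2$ is proper, hence so is its closed subscheme $\ol{P}_0$, and hence so is its blow-up $\ol{P}$. The main obstacle I expect lies precisely in this last modulus verification: the blow-up $\ol{P}$ is generally not normal, so one cannot compare $p_i^\ast M_i^\infty$ and $P^\infty$ directly as Weil divisors on $\ol{P}$ itself, and Lemma \ref{lem:univ-sup} must be invoked carefully on a normal model, with attention paid to the interplay between the blow-up of $\ol{L}$ and the calculus of fractions $\ul{\Sigma}_\fin^{-1}\ulMSm^\fin \simeq \ulMSm$ so that the construction of $g$ yields a well-defined morphism in $\ulMSm$.
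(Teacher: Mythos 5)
Your proposal is correct and follows essentially the same route as the paper's proof: reduce to the case where $g_1,g_2$ are ambient, then check admissibility of the induced map to $P$ by applying Lemma \ref{lem:univ-sup} on a normal model to identify the pullback of $P^\infty$ with $\sup(\ol{g}_1^\ast M_1^\infty,\ol{g}_2^\ast M_2^\infty)$ and bound it by the pullback of $L^\infty$. The only differences are cosmetic --- the paper works with the normalized closure of the graph of $g^\o$ in $\ol{L}\times\ol{P}$ where you blow up $\ol{L}$ to produce an explicit ambient model, and the reduction to ambient $g_i$ should invoke \cite[Lemma 1.3.6]{modsheaf1} (which keeps $M_1,M_2,N$ fixed) rather than Lemma \ref{lem:ambient-diagram} (which would replace all four corners of the square).
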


\begin{remark}
$M_1 \ctimes_N M_2$ does not necessarily represent a fiber product in $\ulMSm^\fin$, and it is not functorial in $\ulMSm^\fin$.
However, under some minimality conditions, they behave nicely in $\ulMSm^\fin$.
\end{remark}

\begin{proof}
We prove that $P:=M_1 \ctimes_N M_2$ satisfies the universal property of fiber product in $\ulMSm$. 
Let $g_1 : L \to M_1$ and $g_2 : L \to M_2$ be morphisms in $\ulMSm$ which coincide at $N$.
Since $\ulMSm \cong \ul{\Sigma}_\fin^{-1} \ulMSm^\fin$, we can find morphisms $L_1 \to L$ in $\ul{\Sigma}_\fin$ such that the composite morphisms $L_1 \to L \to M_i$ are ambient for $i=1,2$, and such that $\ol{L}_1$ is normal. 
Since $L_1 \to L$ is an isomorphism in $\ulMSm$, we replace $L$ with $L_1$ and assume that $\ol{L}$ is normal, and that $g_1$ and $g_2$ are ambient. 
Let $p_1 : P \to M_1$ and $p_2 : P \to M_2$ be the ambient morphisms as in Def. \ref{def:canonical-model}.

There exists a unique morphism $g^\o : L^\o \to P^\o = M_1^\o \times_{N^\o} M_2^\o$ in $\Sm$ which is compatible with $g_1^\o$, $g_2^\o$, $p_1^\o$ and $p_2^\o$.
It suffices to prove that $g^\o$ defines a morphism $L \to P$ in $\ulMSm$.
Let $\Gamma \subset \ol{L} \times \ol{P}$ be the closure of the graph of $g^\o$, and let $\Gamma^N$ be the normalization of $\Gamma$.
Let $s : \Gamma^N \to \ol{L}$ and $t : \Gamma^N \to \ol{P}$ be the natural projections.

Then, for $i=1,2$, we obtain a commutative diagram 
\[\xymatrix{
\Gamma^N \ar[r]^{t} \ar[d]_{s} & \ol{P} \ar[d]^{\ol{p}_i} \\
\ol{L} \ar[r]_{\ol{g}_i} \ar@{.>}^{g^\o}[ru] & \ol{M}_i
}\]
where the commutativity follows from the fact that $\ol{p}_i t$ and $\ol{g}_i s$ coincide on the dense open subset $s^{-1} (L^\o) \subset \Gamma^N$.

By the construction of $P$ and by Lemma \ref{lem:univ-sup}, we have
\begin{align*}
t^\ast P^\infty 
&= \sup (t^\ast \ol{p}_1^\ast M_1^\infty , t^\ast \ol{p}_2^\ast M_2^\infty ) \\
&= \sup (s^\ast \ol{g}_1^\ast M_1^\infty , s^\ast \ol{g}_1^\ast M_2^\infty ),
\end{align*}
where the second equality follows from the commutativity of the above diagram. 
Since $g_1$ and $g_2$ are ambient and $\ol{L}$ is normal, we have $\ol{g}_i^\ast M_i^\infty \leq L^\infty$. 
Therefore, we obtain 
\[
t^\ast P^\infty \leq s^\ast L^\infty,
\]
which shows that $g^\o$ defines a morphism $g : L \to P$. 
This proves the first assertion. 
The last assertion is obvious by construction. 
This finishes the proof. 
\end{proof}

\begin{cor}\label{thm:fiber-product}
Let $f_1 : M_1 \to N$ and $f_2 : M_2 \to N$ be morphisms in $\ulMSm$.
Assume that the fiber product $M_1^\o \times_{N^\o} M_2^\o$ exists in $\Sm$.
Then there exists a fiber product $M_1 \times_N M_2$ in $\ulMSm$.
Moreover, if $M_1$, $M_2$, and $N$ are proper, then $M_1 \times_N M_2$ is proper. 
\end{cor}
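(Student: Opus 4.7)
The plan is to reduce to the ambient case already settled by Theorem \ref{thm:fiber-product-fin}, by using the equivalence $\ul{\Sigma}_\fin^{-1}\ulMSm^\fin \xrightarrow{\simeq} \ulMSm$ of Definition \ref{def:mod-pair}(\ref{def:mod-pair7}) (i.e., the graph trick). The point is that the target $N$ stays fixed while only the sources $M_1, M_2$ are modified by an isomorphism in $\ulMSm$.

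First, I would apply the graph trick (\cite[Lemma 1.3.6]{modsheaf1}, cf.\ Lemma \ref{lem:ambient-diagram}) to each of $f_1$ and $f_2$ separately: for $i=1,2$ there exists a morphism $\sigma_i : M_i' \to M_i$ in $\ul{\Sigma}_\fin$ such that the composite $f_i' := f_i \circ \sigma_i : M_i' \to N$ is ambient. Since $\sigma_i$ is an isomorphism in $\ulMSm$, the two cospans
\[
M_1 \xrightarrow{f_1} N \xleftarrow{f_2} M_2, \qquad M_1' \xrightarrow{f_1'} N \xleftarrow{f_2'} M_2'
\]
are isomorphic in the category of cospans over $N$ in $\ulMSm$. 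Also, because $\sigma_i^\o$ is an isomorphism in $\Sm$, we have $(M_i')^\o \cong M_i^\o$, so the fiber product $(M_1')^\o \times_{N^\o} (M_2')^\o$ exists in $\Sm$.

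By Theorem \ref{thm:fiber-product-fin} applied to the ambient morphisms $f_1', f_2'$, the canonical model $P := M_1' \ctimes_N M_2'$ represents the fiber product $M_1' \times_N M_2'$ in $\ulMSm$. Transporting along the isomorphism of cospans, $P$ also represents $M_1 \times_N M_2$ in $\ulMSm$, which proves the first assertion.

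For the properness claim, assume $M_1, M_2, N \in \MSm$. By the definition of $\ul{\Sigma}_\fin$, the morphism $\ol{\sigma_i} : \ol{M_i'} \to \ol{M_i}$ is proper, and since $\ol{M_i}$ is proper over $k$, so is $\ol{M_i'}$; thus $M_i' \in \MSm$. The last assertion of Theorem \ref{thm:fiber-product-fin} then gives that $P = M_1' \ctimes_N M_2'$ is proper, completing the proof. No real obstacle arises: the only subtle point is the compatibility of the fiber product construction with the Ore-type localization at $\ul{\Sigma}_\fin$, which is immediate because the replacement $\sigma_i$ is an isomorphism in $\ulMSm$ and the graph trick preserves the target of the given cospan.
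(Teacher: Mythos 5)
Your proposal is correct and follows essentially the same route as the paper: replace each $M_i$ by $M_i'$ via the graph trick so that the composites to $N$ become ambient, invoke Theorem \ref{thm:fiber-product-fin}, and transport the fiber product back along the isomorphisms $M_i' \cong M_i$ in $\ulMSm$. Your properness argument (that $\ol{M_i'}$ is proper because $\ol{\sigma_i}$ is proper and $\ol{M_i}$ is proper over $k$) is in fact slightly more explicit than the paper's one-line appeal to the construction of the canonical model.
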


\begin{proof}
By \cite[Lemma 1.3.6]{modsheaf1}, for each $i=1,2$, there exists a morphism $M'_i \to M_i$ in $\ulMSm^\fin$ which is invertible in $\ulMSm$ and such that the composite $M'_i \to M_i \to N$ is ambient.
Theorem \ref{thm:fiber-product-fin} shows that the fiber product $M'_1 \times_N M'_2$ exists in $\ulMSm$. 
This also represents a fiber product $M_1 \times_N M_2$, proving the first assertion. 
The second assertion follows from the construction of the canonical model of fiber product. 
This finishes the proof.
\end{proof}

\begin{remark}\label{rem:tau-fb}
The inclusion functor $\tau_s : \MSm \to \ulMSm$ preserves fiber products by construction. 
\end{remark}

Given some minimality assumptions, we can say more about the canonical model of fiber product.
We will not need this in this paper, but it will be used in the other papers, including \cite{cofinality}.

\begin{prop}
\ 
\begin{enumerate}
\item  Let $f_1 : M_1 \to N$ and $f_2 : M_2 \to N$ be morphisms in $\ulMSm^\fin$, and assume that $f_1$ is minimal, $M_1^\o \times_{N^\o} M_2^\o$ is smooth over $k$ and $\ol{M}_1 \times_{\ol{N}} M_2^\infty$ is an effective Cartier divisor on $\ol{M}_1 \times_{\ol{N}} \ol{M}_2$.
Then we have 
\[
M_1 \ctimes_{N} M_2 = (\ol{M}_1 \times_{\ol{N}} \ol{M}_2 , \ol{M}_1 \times_{\ol{N}} M_2^\infty ).
\]

\item Consider the following commutative diagram 
\[\xymatrix{
U_1 \ar[r] \ar[d]_{j_1} & V \ar[d]_j & \ar[l] \ar[d]^{j_2} U_2 \\
M_1 \ar[r] & N & \ar[l] M_2
}\]
in $\ulMSm^\fin$, such that $j_1$ and $j_2$ are minimal, and such that $M_1^\o \times_{N^\o} M_2^\o$ and $U_1^\o \times_{V^\o} U_2^\o$ are smooth over $k$.
Then the morphism 
\[
j_1 \times j_2 : U_1 \ctimes_V U_2 \to M_1 \ctimes_N M_2
\]
 in $\ulMSm$, induced by the universal property of fiber product, belongs to $\ulMSm^\fin$ and is minimal. 

\item In the situation of (2), if $\ol{j}, \ol{j}_1,\ol{j}_2$ are open immersions, if $U_1 \to V$ is minimal and if $\ol{U}_1 \times_{\ol{V}} \ol{U}_2$ is normal, then 
\[
\ol{j_1 \times j_2} : \ol{U}_1 \times_{\ol{V}} \ol{U}_2 = \ol{U_1 \ctimes_V U_2} \to \ol{M_1 \ctimes_N M_2}
\]
is an open immersion, where the equality follows by (1).
\end{enumerate}
\end{prop}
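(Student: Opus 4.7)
The plan is to prove (1) by directly unpacking the canonical construction of Definition~\ref{def:canonical-model}, and then to deduce (2) and (3) from (1) using the universal property of fibre products together with functoriality of blow-ups.

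For (1), the minimality of $f_1$ gives $M_1^\infty = \ol{f}_1^\ast N^\infty$, so on $\ol{M}_1 \times_{\ol{N}} \ol{M}_2$ one has $\pr_1^\ast M_1^\infty = \pr_2^\ast \ol{f}_2^\ast N^\infty$ by commutativity of the fibre square. The admissibility of $f_2$ (after passage to normalisation where needed) gives $\ol{f}_2^\ast N^\infty \leq M_2^\infty$, so $\pr_1^\ast M_1^\infty \leq \pr_2^\ast M_2^\infty$ as effective Cartier divisors. A domination $D_1 \leq D_2$ forces the scheme-theoretic intersection $D_1 \times_X D_2$ to coincide with $D_1$ (locally $D_2 \in (D_1)$), so the blow-up centre of Definition~\ref{def:canonical-model} is already Cartier; the blow-up $\ol{P}\to \ol{P}_0$ is an isomorphism with exceptional divisor $E = \pr_1^\ast M_1^\infty$. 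The Cartier hypothesis on $\ol{M}_1 \times_{\ol{N}} M_2^\infty$ then forces $\ol{M}_1 \times_{\ol{N}} \ol{M}_2$ to be reduced with $M_1^\o \times_{N^\o} M_2^\o$ dense in it (no associated point lies in a Cartier divisor, and the complement of $\pr_2^\ast M_2^\infty$ is identified set-theoretically with $M_1^\o \times_{N^\o} M_2^\o$ using minimality of $f_1$ and admissibility of $f_2$), whence $\ol{P}_0 = \ol{M}_1 \times_{\ol{N}} \ol{M}_2$. Substituting into $P^\infty = \pr_1^\ast M_1^\infty + \pr_2^\ast M_2^\infty - E$ collapses to $P^\infty = \pr_2^\ast M_2^\infty = \ol{M}_1 \times_{\ol{N}} M_2^\infty$, as claimed.

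For (2), Theorem~\ref{thm:fiber-product-fin} already supplies $j_1 \times j_2$ as a morphism in $\ulMSm$; I would verify ambience and minimality by running Definition~\ref{def:canonical-model} in parallel on the two squares. Functoriality of scheme-theoretic closure furnishes a morphism between the two closures, and the minimality of $j_1$ and $j_2$ identifies the pullback of the $(M_1,M_2,N)$-blow-up centre with the $(U_1,U_2,V)$-centre, so the universal property of blow-ups produces an ambient morphism $\ol{U_1 \ctimes_V U_2} \to \ol{M_1 \ctimes_N M_2}$. The same minimality identifies $(j_1 \times j_2)^\ast (M_1 \ctimes_N M_2)^\infty$ with $(U_1 \ctimes_V U_2)^\infty$, giving the minimality statement.

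For (3), I would apply (1) to the pair $U_1 \to V \leftarrow U_2$, which is legitimate because $U_1 \to V$ is minimal by hypothesis and because $\ol{U}_1 \times_{\ol{V}} U_2^\infty$ is Cartier (normality of $\ol{U}_1 \times_{\ol{V}} \ol{U}_2$ and the density of $U_1^\o \times_{V^\o} U_2^\o$, guaranteed by the open-immersion hypotheses and minimality, reduce the Cartier check to a codimension-one statement). Thus $\ol{U_1 \ctimes_V U_2} = \ol{U}_1 \times_{\ol{V}} \ol{U}_2$. Under the open-immersion hypotheses on $\ol{j}, \ol{j}_1, \ol{j}_2$, base change makes $\ol{U}_1 \times_{\ol{V}} \ol{U}_2 \hookrightarrow \ol{M}_1 \times_{\ol{N}} \ol{M}_2$ an open immersion, and its preimage in $\ol{M_1 \ctimes_N M_2}$ — namely the blow-up of this open subset along the restriction of the $(M_1,M_2,N)$-centre — is trivial once again by (1), so $\ol{j_1 \times j_2}$ coincides with this open immersion. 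The hardest step is the reduction-and-density argument in the proof of (1), together with its sibling Cartier check in (3); once these structural statements are pinned down the rest is a mechanical application of the blow-up construction and its universal property.
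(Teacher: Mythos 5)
Your treatment of (2) and (3) follows the same route as the paper's proof: for (2) you pass to the closures of the interiors, use the minimality of $j_1,j_2$ to identify the pullback of the blow-up centre over $\ol{M}_1\times_{\ol{N}}\ol{M}_2$ with the centre over $\ol{U}_1\times_{\ol{V}}\ol{U}_2$, invoke the universal property of blow-ups to produce the ambient lift, and verify minimality by the same pullback computation of the modulus divisors; for (3) you observe that the centre restricted to the open subset $\ol{U}_1\times_{\ol{V}}\ol{U}_2$ is already an effective Cartier divisor, so the blow-up is an isomorphism over it and the open immersion lifts. For (1), where the paper only cites \cite[Cor.\ 1.10.7]{modsheaf1}, your expansion is essentially the intended argument, and the reducedness/density step (associated points of $\ol{M}_1\times_{\ol{N}}\ol{M}_2$ avoid the Cartier divisor $\pr_2^\ast M_2^\infty$, whose complement is exactly $M_1^\o\times_{N^\o}M_2^\o$ by minimality of $f_1$) is correct and well executed.

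The one step you should not wave through is the inequality $\pr_1^\ast M_1^\infty\le \pr_2^\ast M_2^\infty$. By minimality of $f_1$ this amounts to $\pr_2^\ast \ol{f}_2^\ast N^\infty\le\pr_2^\ast M_2^\infty$, and the modulus condition on $f_2$ only gives $\ol{f}_2^\ast N^\infty\le M_2^\infty$ \emph{after pullback to the normalization} of $\ol{M}_2$ (see Definition \ref{def:mod-pair}). An inequality of Cartier divisors does not in general descend along the normalization of a non-normal reduced scheme: on $\Spec k[t^2,t^3]$ the inequality $\div(t^2)\le\div(t^2+t^3)$ holds after pullback to $\Spec k[t]$ but fails at the origin, since $1+t$ is not regular there. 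So ``after passage to normalisation where needed'' is not a proof of the inequality on $\ol{M}_1\times_{\ol{N}}\ol{M}_2$. This matters because the conclusion $P^\infty=\pr_2^\ast M_2^\infty$ is \emph{equivalent} to that inequality: it forces the blow-up centre $(\pr_1^\ast M_1^\infty)\times_{\ol{P}_0}(\pr_2^\ast M_2^\infty)$ to coincide with $\pr_1^\ast M_1^\infty$. Hence this is the crux of (1) and needs a genuine justification --- an appeal to the precise form of \cite[Cor.\ 1.10.7]{modsheaf1}, a normality hypothesis, or a direct verification in your setting. Everything else in the proposal is sound.
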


\begin{proof}
(1): This follows from the construction of canonical model of fiber product (see also \cite[Corollary 1.10.7]{modsheaf1}). 

(2): Let $\ol{P}$ (resp. $\ol{Q}$) be the closure of $M_1^\o \times_{N^\o} M_2^\o$ (resp. $U_1^\o \times_{V^\o} U_2^\o$) in $\ol{M}_1 \times_{\ol{N}} \ol{M}_2$ (resp. $\ol{U}_1 \times_{\ol{V}} \ol{U}_2$).
Then the morphisms $\ol{j}_1$ and $\ol{j}_2$ induce a morphism \[\ol{J} : \ol{Q} \to \ol{P}.\]
Then we obtain the following commutative diagrams:
\[\xymatrix{
\ol{Q} \ar[r]^{\ol{J}} \ar[d]_{q_i} & \ol{P} \ar[d]^{p_i} \\
\ol{U}_i \ar[r]^{\ol{j}_i} & \ol{M}_i ,
}\]
in $\Sch$ for $i=1,2$, where $p_i$ and $q_i$ are the natural $i$-th projections.
Set $F := p_1^\ast M_1^\infty \times_{\ol{P}} p_2^\ast M_2^\infty \subset \ol{P}$ and $G := q_1^\ast U_1^\infty \times_{\ol{Q}} p_2^\ast U_2^\infty \subset \ol{Q}$.
Then the commutativity of the diagrams shows
\begin{align*}
\ol{J}^{-1} F &:= F \times_{\ol{P}} \ol{Q} = (q_i^\ast \ol{j}_1^\ast M_1^\infty ) \times_{\ol{Q}} (q_i^\ast \ol{j}_2^\ast M_2^\infty ) \\
&= q_i^\ast U_1^\infty  \times_{\ol{Q}} q_i^\ast U_2^\infty \\
&= G,
\end{align*}
where the equality in the second line follows from the minimality of $j_1$ and $j_2$.
Let $\pi_P : \Bl_{F} (\ol{P}) \to \ol{P}$ and $\pi_Q : \Bl_{G} (\ol{Q}) \to \ol{Q}$ be the  blow-ups.
Then, by the universal property of blow-up, $\ol{J}$ lifts to a morphism 
\[
\ol{J}_1 : \ol{U_1 \ctimes_V U_2} = \Bl_{G} (\ol{Q}) \to \Bl_{F} (\ol{P}) = \ol{M_1 \ctimes_N M_2},
\]
which makes the diagram 
\[\xymatrix{
\Bl_{G} (\ol{Q}) \ar[d]_{\pi_Q} \ar[r]^{\ol{J}_1} & \Bl_{F} (\ol{P}) \ar[d]^{\pi_P} \\
\ol{Q} \ar[r]^{\ol{J}} & \ol{P}
}\]
commute. 
Moreover, letting $F':=\pi_P^{-1} (F), G':=\pi_Q^{-1} (G)$ be the exceptional divisors, the commutativity of the two diagrams as above shows
\begin{align*}
\ol{J}_1^\ast (M_1 \ctimes_N M_2)^\infty &= \ol{J}_1^\ast (\pi_P^\ast p_1^\ast M_1^\infty + \pi_P^\ast p_1^\ast M_1^\infty - F') \\
&= \pi_Q^\ast \ol{J}^\ast p_1^\ast M_1^\infty + p_Q^\ast \ol{J}^\ast \pi_2^\ast M_2^\infty - G' \\
&= \pi_Q^\ast q_1^\ast \ol{j}_1^\ast M_1^\infty + \pi_Q^\ast q_2^\ast \ol{j}_2^\ast M_2^\infty - G' \\
&= \pi_Q^\ast q_1^\ast U_1^\infty + \pi_Q^\ast q_2^\ast U_2^\infty - G' \\
&= (U_1 \ctimes_V U_2)^\infty
\end{align*}
where the equality in the fourth line follows from the minimality of $j_1$ and $j_2$.
Therefore, the morphism $\ol{J}_1$ defines a minimal morphism $U_1 \ctimes_V U_2 \to M_1 \ctimes_N M_2$, as desired. 

(3): We take the notation as above. 
Then $\ol{U}_1 \times_{\ol{V}} \ol{U}_2$ is an open subset of $\subset \ol{P}$.
Since $\ol{J}^\ast F = G$, the minimality of $U_1 \to V$ shows $F \cap \ol{U}_1 \times_{\ol{V}} \ol{U}_2 = \ol{U}_1 \times_{\ol{V}} U_2^\infty$, where the right hand side is an effective Cartier divisor on $\ol{U}_1 \times_{\ol{V}} \ol{U}_2$.
Therefore, the blow-up $\pi_P$ is an isomorphism over $\ol{U}_1 \times_{\ol{V}} \ol{U}_2$, and the open immersion $\ol{U}_1 \times_{\ol{V}} \ol{U}_2 \to \ol{P}$ uniquely lifts to an open immersion $\ol{U}_1 \times_{\ol{V}} \ol{U}_2 \to \Bl_F (\ol{P})$. This finishes the proof.
\end{proof}

\subsection{A remark on elementary correspondences}

In this subsection, we will observe a relationship between cartesian squares and elementary correspondences. 
First we provide some definitions.

\begin{defn}\label{def:elem-set}
For any $M_1,M_2 \in \ulMSm$, we define $\ulMCorel$ to be the set of elementary finite correspondence $V : M_1^\o \to M_2^\o$ which satisfies the following \emph{admissibility conditions}: let $\ol{V}$ be the closure of $V$ in $\ol{M}_1 \times \ol{M}_2$, and let $\ol{V}^N \to \ol{V}$ be the normalization of $\ol{V}$.
Let $\pr_i:\ol{V}^N \to \ol{M}_i$ be the $i$-th projections. 
\begin{enumerate}
\item $\mathrm{pr}_1$ is proper.
\item $\pr_1^\ast M_1^\infty \geq \pr_2^\ast M_2^\infty$.
\end{enumerate}
\end{defn}

\begin{defn}[{\protect\cite[Def. 1.1.1, 1.3.3]{modsheaf1}}]\label{def:MCor}
A category $\ulMCor$ is defined as follows: the objects are the same as $\ulMSm$, and and for $M,N \in \ulMCor$, the set of morphisms is defined as the free abelian group generated on $\ulMCorel (M,N)$.
Note that $\ulMCor (M,N) \subset \Cor (M^\o,N^\o)$ by definition. 
The composition is given by the composition of finite correspondences. 
Define $\MCor$ as the full subcategory of $\ulMCor$ whose objects are proper modulus pairs. 
\end{defn}

\begin{prop}\label{prop:functor-elem}
For any modulus pair $M$, for any $f : N \to L$ in $\ulMSm$ and for any $V \in \ulMCorel (M,N)$, the image 
\[ f_+ (V):=(\id_{M^\o} \times f^\o )(V) \subset M^\o \times L^\o \]
 is an irreducible closed subset, and we have $f_+ (V) \in \ulMCorel (M,L)$.
 
Thus, any modulus pair $M$ is associated a covariant functor 
\[
\ulMCorel (M,-) : \ulMSm \to \Set .
\]
\end{prop}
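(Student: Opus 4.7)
First, I would reduce to the case that $f$ is ambient. By the graph trick \cite[Lemma 1.3.6]{modsheaf1}, there exists a minimal morphism $s: \widetilde{N} \to N$ in $\ulMSm^\fin$ that is an isomorphism in $\ulMSm$, with $\ol{\widetilde{N}}$ normal, such that $f \circ s: \widetilde{N} \to L$ is ambient. Since $\widetilde{N}^\o = N^\o$, we may view $V$ as a subset of $M^\o \times \widetilde{N}^\o$; the minimality condition $\ol{s}^\ast N^\infty = \widetilde{N}^\infty$ together with the properness of $\ol{s}$ gives $V \in \ulMCorel(M, \widetilde{N})$, by pulling back the admissibility of $V$ along the morphism $\widetilde{\ol{V}}^N \to \ol{V}^N$ furnished by the universal property of normalization. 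Clearly $(f \circ s)_+(V) = f_+(V)$, so we may assume from the outset that $f$ is ambient, and hence $\ol{f}: \ol{N} \to \ol{L}$ is a morphism.

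Under this assumption, the morphism $(\id_{M^\o} \times f^\o)|_V : V \to M^\o \times L^\o$ is a morphism over $M^\o$. Since $V$ is finite (hence proper) over $M^\o$ and $M^\o \times L^\o$ is separated over $M^\o$, this restriction is proper, so $f_+(V)$ is closed and irreducible in $M^\o \times L^\o$. Moreover, fibers of $f_+(V) \to M^\o_\alpha$ are images of the finite fibers of $V \to M^\o_\alpha$ and are therefore finite; combined with properness, this shows $f_+(V)$ is finite surjective over $M^\o_\alpha$, establishing that it is an elementary correspondence. For the properness condition of admissibility, set $\ol{W} := \ol{f_+(V)}$. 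The admissibility of $V$ gives $\ol{V}^N \to \ol{M}$ proper; combined with finite surjectivity of $\ol{V}^N \to \ol{V}$, a standard descent argument yields $\ol{V} \to \ol{M}$ proper. Hence $\ol{V} \to \ol{M} \times \ol{L}$ is proper (the target being separated over $\ol{M}$), its image equals $\ol{W}$, and $\ol{W}^N \to \ol{W} \to \ol{M}$ is proper.

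For the modulus condition on $\ol{W}^N$, the universal property of normalization supplies a unique morphism $\pi: \ol{V}^N \to \ol{W}^N$ over $\ol{M}$, which is dominant and generically finite because $\dim \ol{V}^N = \dim \ol{W}^N = \dim M^\o_\alpha$. Commutativity of the projections yields $\pi^\ast \pr_M^\ast M^\infty = \pr_M^\ast M^\infty$ and $\pi^\ast \pr_L^\ast L^\infty = \pr_N^\ast \ol{f}^\ast L^\infty$ on $\ol{V}^N$. Combining the admissibility of $V$ on $\ol{V}^N$ (which gives $\pr_M^\ast M^\infty \geq \pr_N^\ast N^\infty$) with the pullback of the admissibility of $f$ to $\ol{V}^N$ (which gives $\pr_N^\ast N^\infty \geq \pr_N^\ast \ol{f}^\ast L^\infty$) produces the inequality $\pi^\ast \pr_M^\ast M^\infty \geq \pi^\ast \pr_L^\ast L^\infty$ on $\ol{V}^N$. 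Since $\pi$ is dominant and generically finite between normal integral schemes, every prime divisor of $\ol{W}^N$ is dominated by some prime divisor of $\ol{V}^N$ with positive ramification index, so the inequality descends to $\ol{W}^N$, establishing the modulus condition.

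The main obstacle is the pullback of the admissibility of $f$ to $\ol{V}^N$ in the step above, since the projection $\ol{V}^N \to \ol{N}$ need not be dominant and thus need not factor through the normalization $\ol{N}^N$ to which the admissibility of $f$ refers. I would address this by forming the fiber product $\ol{V}^N \times_{\ol{N}} \ol{N}^N$, taking the normalization of its irreducible components that dominate $\ol{V}^N$, and exploiting the fact that finite surjective morphisms between normal integral schemes both preserve and reflect effectiveness of Weil divisors; pulling back the inequality $N^\infty|_{\ol{N}^N} \geq \ol{f}^\ast L^\infty|_{\ol{N}^N}$ to this normalization and then descending along the finite surjection to $\ol{V}^N$ yields the required inequality. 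Finally, the functoriality of $\ulMCorel(M,-): \ulMSm \to \Set$ is immediate from the set-theoretic identities $(\id_N)_+(V) = V$ and $(g \circ f)_+(V) = g_+(f_+(V))$.
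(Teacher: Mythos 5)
Your proof is correct, but it takes a genuinely different route from the paper's. The paper disposes of the proposition in a few lines: since $f$ is a morphism of modulus pairs, its graph $\Gamma_{f^\o}$ is an admissible correspondence from $N$ to $L$, so the composition law for admissible correspondences \cite[Prop.\ 1.2.3]{modsheaf1} gives $\Gamma_{f^\o}\circ V\in\ulMCor(M,L)$; one then checks that $f_+(V)$ is precisely the support of this composite, hence a component of an admissible cycle, hence itself lies in $\ulMCorel(M,L)$. You instead re-prove the relevant special case of that composition law from scratch: reduction to an ambient $f$ with normal ambient space via the graph trick, direct verification that $f_+(V)$ is closed, irreducible, finite and surjective over a component of $M^\o$, descent of properness from $\ol{V}^N$ to $\ol{W}^N$, and descent of the divisor inequality along the surjection $\pi:\ol{V}^N\to\ol{W}^N$ of normal integral schemes --- the last step being exactly Lemma \ref{lKL} (quoted later in the paper from \cite{KP}), which you essentially re-derive via ramification indices. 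All of these steps check out. The paper's approach buys brevity by reusing infrastructure already established in \cite{modsheaf1}; yours buys a self-contained argument at the cost of length. One remark: your closing paragraph about the ``main obstacle'' that $\ol{V}^N\to\ol{N}$ need not factor through $\ol{N}^N$ is superfluous given your own first reduction --- once $\ol{\widetilde N}$ is normal, the modulus condition for the ambient morphism $f\circ s$ is an inequality of effective Cartier divisors on $\ol{\widetilde N}$ itself, and such an inequality pulls back to $\ol{V}^N$ along any morphism whose image meets the interior, so the fiber-product detour is not needed (it would, however, be the right move had you not normalized $\ol{N}$ first).
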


\begin{proof}
By \cite[Prop. 1.2.3]{modsheaf1}, the composition of finite correspondences $W := \Gamma_{f^\o} \circ V$ belongs to $\ulMCor (M,L)$, where $\Gamma_{f^\o}$ denotes the graph of $f^\o : M^\o \to N^\o$. 
By the definition of composition, we can verify that $|W| = f_+ (V)$.
This implies that $f_+ (V)$ is a component of $W$.
Therefore, we have $W \in \ulMCor (M,L)$, as desired. 
\end{proof}

\begin{prop}\label{prop:fb-fc}
Let $T$ be a pull-back square in $\ulMSm$ of the form
\begin{equation}\label{eq:square-T}\begin{gathered}\xymatrix{
T(00) \ar[r]^{v_T} \ar[d]_{q_T} & T(01) \ar[d]^{p_T} \\
T(10) \ar[r]^{u_T} & T(11)
}\end{gathered}\end{equation}
and let $M$ be a modulus pair. 
Consider the associated commutative diagram of sets 
\[\xymatrix{
\ulMCorel (M,T(00)) \ar[r]^{v_{T+}} \ar[d]_{q_{T+}} & \ulMCorel (M,T(01)) \ar[d]^{p_{T+}} \\
\ulMCorel (M,T(10)) \ar[r]^{u_{T+}} & \ulMCorel (M,T(11)),
}\]
and set
\[
\Pi := \ulMCorel (M,T(10)) \times_{\ulMCorel (M,T(11))} \ulMCorel (M,T(01)).
\]
Then the induced map $\rho : \ulMCorel (M,T(00)) \to \Pi$ is surjective. 
Moreover, it is bijective if $v_T^\o$ is an immersion. 
\end{prop}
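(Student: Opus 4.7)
The plan is to first reduce to the case where $T$ is ambient via Lemma~\ref{lem:ambient-diagram}, so that Theorem~\ref{thm:fiber-product-fin} identifies $T(00)$ with the canonical model $T(10)\ctimes_{T(11)} T(01)$ and $T(00)^\o=T(10)^\o\times_{T(11)^\o} T(01)^\o$ in $\Sm$. Given $(V_1,V_2)\in\Pi$, I set $W:=u_{T+}(V_1)=p_{T+}(V_2)$ and form the scheme-theoretic fibre product $V_1\times_W V_2$, a closed subscheme of $M^\o\times T(00)^\o$. Since the projection $V_1\times_W V_2\to V_1$ is a base change of the finite surjection $V_2\to W$, it is finite surjective, so $V_1\times_W V_2$ is equidimensional of dimension $\dim V_1=\dim M^\o$; a finite morphism preserves dimension on components, so every irreducible component surjects onto $V_1$ and, by symmetry, onto $V_2$. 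I take $V$ to be any such component with its reduced structure: it is then integral and finite surjective over a component of $M^\o$, and the surjectivity statements give $q_{T+}(V)=V_1$ and $v_{T+}(V)=V_2$.

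\textbf{Admissibility.} The main step is to check $V\in\ulMCorel(M,T(00))$. Let $\ol V$ be the closure of $V$ in $\ol M\times\ol{T(00)}$ and $\ol V^N$ its normalisation. The canonical projections $\ol{T(00)}\to\ol{T(10)},\ol{T(01)}$ send $\ol V$ into $\ol{V_1},\ol{V_2}$, so the universal property of normalisation yields morphisms $\ol V^N\to\ol{V_i}^N$ compatible over $\ol M$. For the modulus condition, the description $T(00)^\infty=\pi_P^\ast p_1^\ast T(10)^\infty+\pi_P^\ast p_2^\ast T(01)^\infty-E$ from Definition~\ref{def:canonical-model} combined with Lemma~\ref{lem:univ-sup} gives $\pr^\ast T(00)^\infty=\sup(\text{pullback of }T(10)^\infty,\text{pullback of }T(01)^\infty)$ on $\ol V^N$, and each term is dominated by the pullback of $M^\infty$ by admissibility of $V_1,V_2$.

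The harder part is properness of $\ol V^N\to\ol M$. I plan to factor it as
\[
\ol V^N\longrightarrow \ol V_{\mathrm{big}}^N\longrightarrow \ol{V_1}^N\times_{\ol M}\ol{V_2}^N\longrightarrow \ol M,
\]
where $\ol V_{\mathrm{big}}$ denotes the closure of $V$ in $\ol M\times\ol{T(10)}\times\ol{T(01)}$. The third arrow is proper, being a composition of base changes of the admissibility projections $\ol{V_i}^N\to\ol M$. The middle arrow is finite: both source and target are finite over $\ol{V_1}\times_{\ol M}\ol{V_2}$ (the source via the closed immersion $\ol V_{\mathrm{big}}\subset\ol{V_1}\times_{\ol M}\ol{V_2}$ followed by finite normalisation, the target as a product of the finite normalisations), so cancellation yields finiteness. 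The first arrow is proper because $\ol{T(00)}\to\ol{T(10)}\times\ol{T(01)}$ is proper (blow-up, then closed immersion $\ol{T(00)}_0\hookrightarrow\ol{T(10)}\times_{\ol{T(11)}}\ol{T(01)}$, then closed immersion into the product), which restricts to a proper map $\ol V\to\ol V_{\mathrm{big}}$; normalisation and another cancellation give properness of $\ol V^N\to\ol V_{\mathrm{big}}^N$.

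\textbf{Injectivity when $v_T^\o$ is an immersion.} In this case $q_T^\o$, being a base change, is also an immersion, so $\id_{M^\o}\times q_T^\o$ is an immersion; consequently $V_1\times_W V_2\to V_1$ is a surjective immersion, and the reduced preimage of $V_1$ is the unique integral closed subscheme of $M^\o\times T(00)^\o$ mapping to $V_1$ under $q_{T+}$. Any lift $V$ must equal this, giving injectivity of $\rho$ and thus bijectivity.
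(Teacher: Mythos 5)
Your overall route is the paper's: reduce to an ambient square by Lemma~\ref{lem:ambient-diagram}, realize $T(00)$ as (the normalization of) the canonical model $T(10)\ctimes_{T(11)}T(01)$ so that $T(00)^\infty$ is a universal supremum, construct the lift inside $V_1\times_W V_2\subset M^\o\times T(00)^\o$, and check admissibility via Lemma~\ref{lem:univ-sup} together with a properness factorization through $\ol{V}_1\times_{\ol M}\ol{V}_2$; your modulus and properness checks are sound (the paper argues properness directly on closures, and your normalized version with cancellation amounts to the same thing). The genuine gap is in the existence of the component $V$: the inference ``$V_1\times_W V_2\to V_1$ is finite surjective, hence $V_1\times_W V_2$ is equidimensional and every irreducible component surjects onto $V_1$'' is false. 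A finite surjective morphism onto an integral scheme can have lower-dimensional irreducible components in its source: if $V_1=V_2=\widetilde{W}$ is the normalization of a nodal curve $W$ with node preimages $a,b$, then $V_1\times_W V_2$ is the diagonal copy of $\widetilde{W}$ together with the two isolated points $(a,b)$ and $(b,a)$, which are components dominating neither factor. For such a bad component your later steps collapse: it need not be finite surjective over a component of $M^\o$, and the dominant maps $\ol{V}^N\to\ol{V}_i^N$ needed for the modulus estimate do not exist. The repair is precisely the paper's key move, which your write-up is missing: since $u_{T+}(V_1)=p_{T+}(V_2)=W$, the generic points $\xi_1,\xi_2$ of $V_1,V_2$ both lie over the generic point of $W$, so $k(\xi_1)\otimes_{k(W)}k(\xi_2)\neq 0$ and there is a point $\zeta$ of $V_1\times_W V_2$ lying over both; taking $V:=\ol{\{\zeta\}}$ yields an irreducible closed subset dominating both $V_1$ and $V_2$, after which your admissibility argument goes through verbatim.

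Your injectivity argument also contains a false step: ``$q_T^\o$, being a base change, is also an immersion.'' In the cartesian square $q_T^\o$ is the base change of $p_T^\o$ along $u_T^\o$; it is $v_T^\o$ that is the base change of $u_T^\o$, and nothing forces $p_T^\o$ to be an immersion. Concretely, for the squares this proposition is built for, take $p_T^\o$ \'etale of degree $2$ split over the open subset $u_T^\o:T(10)^\o\subset T(11)^\o$: then $T(00)^\o=T(10)^\o\sqcup T(10)^\o$, $v_T^\o$ is an open immersion, yet $q_T^\o$ is two-to-one, so $V_1\times_W V_2\to V_1$ need not be an immersion and uniqueness of the lift cannot be read off from it. The conclusion itself is fine and has a one-line proof, which is the paper's: $\pr_2\circ\rho=v_{T+}$, and $v_{T+}$ is injective when $v_T^\o$ is an immersion because $\id_{M^\o}\times v_T^\o$ is then injective on integral closed subschemes; alternatively, your uniqueness argument is repaired by using the \emph{other} projection $V_1\times_W V_2\to V_2$, which is the restriction of $\id_{M^\o}\times v_T^\o$ and is surjective as a base change of $V_1\to W$.
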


\begin{remark}
We can formulate another statement by replacing $\ulMCorel$ with $\ulMCor$ and $(-)_+$ with $(-)_\ast$, but it will be false.
Indeed, if $\alpha_1$ and $\alpha_2$ are distinct elementary correspondences which have the same image $\beta$ under $p_{T\ast}$, then the image of the (non-elementary) finite correspondence $\alpha := \alpha_1 - \alpha_2$ is zero, which is trivially contained in the image of $u_{T\ast}$. But there is no reason why $\alpha$ is contained in the image of $v_{T\ast}$.
\end{remark}

\begin{proof}
The latter statement is clear, since the composite $\mathrm{pr}_2 \circ \rho$ is equal to $v_{T+}$, which is injective if $v_T^\o$ is an immersion. 

We prove the surjectivity of $\rho$.
Take any $\alpha_1 \in \ulMCorel (M,T(10))$ and $\alpha_2 \in \ulMCorel (M,T(01))$, and assume $\beta := u_{T+} (\alpha_1) = p_{T+} (\alpha_2)$.
Let $\xi_i$ be the generic point of $\alpha_i$ for $i=1,2$.

We need to prove that there exists an element $\gamma \in \ulMCorel (M,T(00))$ which maps to $\alpha_1$ and $\alpha_2$.

Let 
\begin{align*}
\zeta 
&\in (M^\o \times T(10)^\o) \times_{M^\o \times T^\o (11)} (M^\o \times T(01)^\o) \\
&\cong M^\o \times T(10)^\o \times_{T^\o (11)} T(01)^\o \\
&\cong M^\o \times T(00)^\o.
\end{align*}
 be a point which lies over $\xi_1$ and $\xi_2$.
Let $\gamma := \ol{\{\zeta\}}$ be the closure of $\zeta$ in $M^\o \times T(00)^\o$, endowed with the reduced scheme structure.

\begin{claim}
$\gamma$ is an elementary correspondence from $M^\o$ to $T(00)^\o$.
\end{claim}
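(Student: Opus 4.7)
The plan is to verify directly the three conditions defining an elementary finite correspondence: $\gamma$ should be an irreducible closed subset of $M^\o \times T(00)^\o$, the first projection $\gamma \to M^\o$ should be finite, and its image should be an irreducible component of $M^\o$.

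Irreducibility is immediate since $\gamma$ is the reduced closure of the single point $\zeta$. For the component property, I exploit the pullback description
\[
M^\o \times T(00)^\o \;\cong\; (M^\o \times T(10)^\o) \times_{M^\o \times T(11)^\o} (M^\o \times T(01)^\o),
\]
which is available because $T$ is a pullback square in $\ulMSm$, so by Corollary \ref{thm:fiber-product} we have $T(00)^\o \cong T(10)^\o \times_{T(11)^\o} T(01)^\o$ in $\Sm$. Under this identification $\zeta$ projects to $\xi_1$; composing with the first projection to $M^\o$ and using that $\alpha_1$ is elementary, the image of $\zeta$ in $M^\o$ is a generic point of an irreducible component. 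Passing to closures shows that $\gamma \to M^\o$ surjects onto that component.

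The key step is the finiteness of $\gamma \to M^\o$. Continuing with the pullback description, the image of $\gamma$ under each of the two projections to $M^\o \times T(10)^\o$ and $M^\o \times T(01)^\o$ is contained in the closure of the image of $\zeta$, namely in $\alpha_1$ and $\alpha_2$ respectively. Since $u_{T+}(\alpha_1) = p_{T+}(\alpha_2) = \beta$, both $\alpha_1$ and $\alpha_2$ map into $\beta \subset M^\o \times T(11)^\o$, so I obtain a closed embedding
\[
\gamma \;\hookrightarrow\; \alpha_1 \times_\beta \alpha_2.
\]
The morphism $\alpha_1 \to \beta$ is a morphism over $M^\o$ between schemes finite over $M^\o$, with target separated over $M^\o$, hence is itself finite. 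Base change along $\alpha_2 \to \beta$ produces finiteness of $\alpha_1 \times_\beta \alpha_2 \to \alpha_2$, and composition with the finite map $\alpha_2 \to M^\o$ gives finiteness of $\alpha_1 \times_\beta \alpha_2 \to M^\o$. Restricting to the closed subscheme $\gamma$ concludes the verification.

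The main obstacle is this finiteness step: one must first realize $\gamma$ geometrically as sitting inside the ``combinatorial'' fiber product $\alpha_1 \times_\beta \alpha_2$, after which all finiteness claims reduce formally to finiteness of $\alpha_1$, $\alpha_2$, $\beta$ over $M^\o$ together with separatedness of $\beta$ over $M^\o$. Irreducibility and the component property are then formal consequences of $\gamma$ being the closure of a single point whose image in $M^\o$ is already known to be a generic point.
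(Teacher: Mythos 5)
Your proof is correct and follows essentially the same route as the paper: the paper likewise realizes $\gamma$ as a closed subscheme of a fibre product of $\alpha_1$ and $\alpha_2$, deduces finiteness of $\gamma \to M^\o$ from finiteness of the $\alpha_i$ over $M^\o$, and gets surjectivity onto a component by factoring through $\alpha_1$. The only immaterial difference is that the paper uses $\alpha_1 \times_{M^\o} \alpha_2$ rather than $\alpha_1 \times_{\beta} \alpha_2$, which makes finiteness over $M^\o$ immediate and renders your intermediate step (finiteness of $\alpha_1 \to \beta$) unnecessary.
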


\begin{proof}
We have to prove that $\gamma$ is finite and surjective over a component of $M^\o$.
Since $\zeta = (\xi_1,\xi_2) \in \alpha_1 \times_{M^\o} \alpha_2$, the scheme $\gamma$ is naturally a closed subscheme of $\alpha_1 \times_{M^\o} \alpha_2$. 
Moreover, since $\zeta$ maps to $\xi_i$ via the projection $\pr_i : \alpha_1 \times_{M^\o} \alpha_2 \to \alpha_i$ for each $i=1,2$, we obtain dominant maps $\gamma \to \alpha_i$. 
These maps are finite (hence surjective) since each $\alpha_i$ is finite over $M^\o$.
Since the natural map $\gamma \to M^\o$ factors as $\gamma \to \alpha_1 \to M^\o$, and since $\alpha_1$ is finite and surjective over a component, we obtain the claim.
\end{proof}

\begin{claim}
$\gamma \in \ulMCorel (M,T(00))$.
\end{claim}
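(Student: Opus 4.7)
The plan is to verify the two admissibility conditions of Definition \ref{def:elem-set} for $\gamma$, namely properness of $\overline{\gamma}^N \to \overline{M}$ and the modulus inequality $M^\infty|_{\overline{\gamma}^N} \geq T(00)^\infty|_{\overline{\gamma}^N}$. First I would reduce to a convenient presentation: by Lemma \ref{lem:ambient-diagram} together with the canonical-model construction of Definition \ref{def:canonical-model} and Theorem \ref{thm:fiber-product-fin}, I may replace $T$ by an isomorphic pull-back square in $\ulMSm^\Sq$ in which all four morphisms are ambient and $T(00) = T(10) \ctimes_{T(11)} T(01)$; this changes neither the set $\ulMCorel(M,T(00))$ nor the equality $u_{T+}(\alpha_1) = p_{T+}(\alpha_2)$. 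Under this reduction $\overline{T(00)}$ is a blow-up of the closure $\overline{P}_0$ of $T(00)^\o$ inside $\overline{T(10)} \times_{\overline{T(11)}} \overline{T(01)}$, and we obtain a canonical morphism $\overline{T(00)} \to \overline{T(10)} \times \overline{T(01)}$ which is proper, being the composition of a blow-down with the closed immersion coming from the separatedness of $\overline{T(11)}$.

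For the properness condition, I would introduce the auxiliary closure $\overline{\gamma}^\sharp$ of $\gamma$ inside $\overline{M} \times \overline{T(10)} \times \overline{T(01)}$, via the embedding induced by $T(00)^\o \hookrightarrow T(10)^\o \times T(01)^\o$. Since $\gamma \subset \alpha_1 \times_{M^\o} \alpha_2$, the closure $\overline{\gamma}^\sharp$ lies in $\overline{\alpha}_1 \times_{\overline{M}} \overline{\alpha}_2$, which is itself the set-theoretic image of $\overline{\alpha}_1^N \times_{\overline{M}} \overline{\alpha}_2^N$. The latter is proper over $\overline{M}$ by the admissibility of $\alpha_1$ and $\alpha_2$, so its image in the separated $\overline{M}$-scheme $\overline{M} \times \overline{T(10)} \times \overline{T(01)}$ is closed and proper over $\overline{M}$, and hence so is the closed subscheme $\overline{\gamma}^\sharp$. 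Combined with the properness of $\overline{M} \times \overline{T(00)} \to \overline{M} \times \overline{T(10)} \times \overline{T(01)}$ noted above, this forces the induced morphism $\overline{\gamma} \to \overline{\gamma}^\sharp$ to be proper, and therefore $\overline{\gamma}^N \to \overline{\gamma} \to \overline{M}$ is proper as well.

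For the modulus condition, I would invoke Lemma \ref{lem:univ-sup} applied to the divisor formula of Definition \ref{def:canonical-model}: $T(00)^\infty = D_1 + D_2 - \inf(D_1,D_2)$, where $D_i := \pi_P^\ast \overline{p}_{0,i}^\ast T(1i)^\infty$ and $\inf(D_1,D_2)$ is the exceptional divisor of the blow-up (hence Cartier). Since $\overline{\gamma}^N$ is normal and its generic point lies over $T(00)^\o$, the lemma gives $T(00)^\infty|_{\overline{\gamma}^N} = \sup\bigl(T(10)^\infty|_{\overline{\gamma}^N},\; T(01)^\infty|_{\overline{\gamma}^N}\bigr)$, with both pullbacks computed through the composition $\overline{\gamma}^N \to \overline{T(00)} \to \overline{T(1i)}$. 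Since $\overline{\gamma}^N$ is normal and dominates $\overline{\alpha}_i$, the universal property of normalization provides a factorization $\overline{\gamma}^N \to \overline{\alpha}_i^N$ compatible with the projections to $\overline{T(1i)}$, so the admissibility of $\alpha_i$ gives $M^\infty|_{\overline{\gamma}^N} \geq T(1i)^\infty|_{\overline{\gamma}^N}$ for $i = 1,2$; taking the supremum yields the desired inequality.

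The principal obstacle is the properness step, where one must carefully confirm both that $\overline{\gamma}^\sharp$ really falls inside the image of $\overline{\alpha}_1^N \times_{\overline{M}} \overline{\alpha}_2^N$ and that the blow-down portion of $\overline{M} \times \overline{T(00)} \to \overline{M} \times \overline{T(10)} \times \overline{T(01)}$ is genuinely proper; everything else reduces to bookkeeping with the canonical-model construction and the supremum formula of Lemma \ref{lem:univ-sup}.
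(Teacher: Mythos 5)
Your argument is correct and follows essentially the same route as the paper's proof: reduce to an ambient square with $T(00)$ given by the canonical model $T(10)\ctimes_{T(11)}T(01)$, deduce properness of $\ol{\gamma}\to\ol{M}$ by factoring through (the closure inside) $\ol{\alpha}_1\times_{\ol{M}}\ol{\alpha}_2$ together with the properness of $\ol{T(00)}\to\ol{T(10)}\times\ol{T(01)}$, and obtain the modulus inequality from the admissibility of $\alpha_1,\alpha_2$ via the factorizations $\ol{\gamma}^N\to\ol{\alpha}_i^N$ and the supremum description of $T(00)^\infty$ from Lemma \ref{lem:univ-sup}. The only differences are cosmetic (the explicit auxiliary closure $\ol{\gamma}^\sharp$ and a notational slip writing $T(1i)^\infty$ for $T(10)^\infty$ and $T(01)^\infty$).
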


\begin{proof}
We make a preliminary reduction as follows:
since the assertion depends only on the isomorphism class of $T$ in $\ulMSm^\Sq$, we may assume that $T$ is ambient by Lemma \ref{lem:ambient-diagram}.
Moreover, since $T$ is a pull-back diagram, we have $T(00) \cong T(10) \times^c_{T(11)} T(01)$, where the right hand side is the canonical model of fiber product in Def. \ref{def:canonical-model}. 
Therefore, by replacing $T(00)$ with (the normalization of) $T(10) \ctimes_{T(11)} T(01)$ (this preserves the condition that $T$ is ambient by the construction of canonical model), we may assume that $\ol{q}_T^\ast T(10)^\infty$ and $\ol{v}_T^\ast T^\ast (01)$ have a universal supremum in the sense of \cite[Def. 1.10.2]{modsheaf1} and that $T(00)^\infty = \sup (\ol{q}_T^\ast T(10)^\infty , \ol{v}_T^\ast T(01)^\infty)$.

Let $\ol{\gamma}$ be the closure of $\gamma$ in $\ol{M} \times \ol{T}(00)$.
First we check that $\ol{\gamma}$ is proper over $\ol{M}$.
Note that the natural map $\ol{\gamma} \to \ol{M}$ factors as $\ol{\gamma} \to \ol{\alpha}_1 \times_{\ol{M}} \ol{\alpha}_2 \to \ol{M}$. 
The first map is proper since the natural map $\ol{T}(00) \to \ol{T}(10) \times_{\ol{T}(11)} \ol{T}(01)$ is proper by construction of the canonical model of fiber product, and the latter map is proper since $\ol{
\alpha}_i$ are proper over $\ol{M}$ by assumption.
This shows that $\ol{\gamma} \to \ol{M}$ is proper, as desired.

Next we check the modulus condition.
Let $\ol{\gamma}^N$ be the normalization of $\ol{\gamma}$. 
Similarly, let $\ol{\alpha}_1$ (resp. $\ol{\alpha}_2$) be the closure of $\alpha_1$ (resp. $\alpha_2$) in $\ol{M} \times \ol{T}(10)$ (resp. $\ol{M} \times \ol{T}(01)$), and $\ol{\alpha}_i^N$ the normalization of $\ol{\alpha}_i$. 
By assumption, we have $\alpha_1 \in \ulMCorel (M,T(10))$ and $\alpha_2 \in \ulMCorel (M,T(01))$, which means $M^\infty |_{\ol{\alpha}_1^N} \geq T(10)^\infty |_{\ol{\alpha}_1^N}$ and $M^\infty |_{\ol{\alpha}_2^N} \geq T (01)^\infty |_{\ol{\alpha}_2^N}$.
Since $\gamma \to \alpha_i$ are dominant for $i=1,2$, we obtain morphisms $\ol{\gamma}^N \to \ol{\alpha}_i^N$ by the universal property of normalization. 
Therefore, the above inequalities imply 
\[
M^\infty |_{\ol{\gamma}^N} \geq \ol{q}_T^\ast T(10)^\infty |_{\ol{\gamma}^N}, \ \ M^\infty |_{\ol{\gamma}^N} \geq \ol{v}_T^\ast T(01)^\infty |_{\ol{\gamma}^N}.
\]
Thus, since $\ol{q}_T^\ast T(10)^\infty$ and $\ol{v}_T^\ast T(01)^\infty$ have a universal supremum and since $T(00)^\infty = \sup (\ol{q}_T^\ast T(10)^\infty, \ol{v}_T^\ast T(01)^\infty)$ by assumption, we obtain  
\begin{align*}
M^\infty |_{\ol{\gamma}^N} 
&\geq \sup (\ol{q}_T^\ast T(10)^\infty |_{\ol{\gamma}^N} ,  \ol{v}_T^\ast T(01)^\infty |_{\ol{\gamma}^N}) \\
&= \sup (\ol{q}_T^\ast T(10)^\infty ,  \ol{v}_T^\ast T(01)^\infty)  |_{\ol{\gamma}^N} \\
&= T(00)^\infty |_{\ol{\gamma}^N} .
\end{align*}
by \cite[Remark 1.10.3 (3)]{modsheaf1}.
This finishes the proof of the claim. 
\end{proof}

By construction, we have $\alpha_1 = q_{T+} (\gamma)$ and $\alpha_2 = v_{T+} (\gamma)$.\
This finishes the proof of Proposition \ref{prop:fb-fc}.
\end{proof}

\section{Off-diagonal functor}\label{section-OD}
\setcounter{subsection}{1}

We introduce the ``off-diagonal'' functor, which is a key notion used in the definition of the cd-structure on $\MSm$.

\begin{defn}
Define $\ulMEt$ as a category such that 
\begin{enumerate}
\item objects are  those morphisms $f : M \to N$ in $\ulMSm$ such that $f^\o : M^\o \to N^\o$ is \'etale, and
\item morphisms of $f : M_1 \to N_1$ and $g : M_2 \to N_2$ are those pairs of morphisms $(s: M_1 \to M_2 , t:N_1 \to N_2)$ which are compatible with $f,g$ such that $s^\o$ and $t^\o$ are \emph{open immersions}.
\end{enumerate}

Define $\MEt$ as 
the full subcategory of $\ulMEt$ 
consisting of those $f : M \to N$ such that $M,N \in \MSm$.
\end{defn}

\begin{defn}
For modulus pairs $M$ and $N$, we define \emph{the disjoint union of $M$ and $N$} by 
\[
M \sqcup N := (\ol{M} \sqcup \ol{N}, M^\infty \sqcup N^\infty ).
\]
We have $(M \sqcup N)^\o = M^\o \sqcup N^\o$, and $M \sqcup N$ represents a coproduct of $M$ and $N$ in the category $\ulMSm$.
\end{defn}

\begin{thm}\label{thm:def-OD}
There exists a functor 
\[
\OD : \ulMEt \to \ulMSm
\]
such that for any $f :  M \to N$, there exists a functorial decomposition 
\[
M \times_N M \cong M \sqcup \OD(f).
\]
Moreover, we have $\OD (f)^\o = M^\o \times_{N^\o} M^\o \setminus \Delta (M^\o)$, where $\Delta : M^\o \to M^\o \times_{N^\o} M^\o$ is the diagonal morphism. 
In particular, if $f^\o$ is an open immersion, then $\OD (f)^\o = \emptyset$, hence $\OD (f) = \emptyset$.
Moreover, the functor $\OD$ restricts to a functor 
\[
\OD : \MEt \to \MSm.
\]
We call the functors \emph{the off-diagonal functors}.
\end{thm}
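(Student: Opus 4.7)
The plan is to build $\OD(f)$ concretely by using the fact that an étale morphism has an open-and-closed diagonal, and then lift this clopen decomposition from the interior to the full modulus pair by means of the canonical model of fiber product and the calculus of fractions $\ul{\Sigma}_\fin^{-1}\ulMSm^\fin \simeq \ulMSm$.

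\emph{Step 1.} Since $f^\o$ is étale and hence unramified, $\Delta^\o : M^\o \to M^\o \times_{N^\o} M^\o$ is an open immersion; being automatically a closed immersion, it is clopen. Thus
\[
M^\o \times_{N^\o} M^\o = \Delta^\o(M^\o) \sqcup W_f,
\]
where $W_f$ denotes the open-and-closed complement of the diagonal.

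\emph{Step 2.} By Corollary~\ref{thm:fiber-product}, the fiber product $P := M \times_N M$ exists in $\ulMSm$; applying the graph trick as in the proof of that corollary, we may assume $f$ is ambient and take $P$ to be the canonical model $M \ctimes_N M$. Let $\ol{W}$ be the scheme-theoretic closure of $W_f$ in $\ol{P}$. Since $\ol{W}$ is reduced and no component of $\ol{W}$ is contained in $|P^\infty|$ (each contains a dense open subscheme of the smooth $W_f \subset P^\o$), the restriction $P^\infty|_{\ol{W}}$ is an effective Cartier divisor and $\ol{W}\setminus|P^\infty|_{\ol{W}}| = W_f$ is smooth. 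We set
\[
\OD(f) := (\ol{W},\, P^\infty|_{\ol{W}}).
\]

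\emph{Step 3.} Let $\ol{D}$ be the closure of $\Delta^\o(M^\o)$ in $\ol{P}$. The natural morphism
\[
(\ol{D},P^\infty|_{\ol{D}}) \sqcup \OD(f) \longrightarrow P
\]
is ambient, minimal, proper, and an isomorphism on interiors, so it lies in $\ul{\Sigma}_\fin$ and is therefore an isomorphism in $\ulMSm$. It remains to identify $(\ol{D},P^\infty|_{\ol{D}})$ with $M$. Using the blow-up description $\ol{P} = \Bl_{F}(\ol{P_0})$ with $F = \ol{p}_{0,1}^{\ast}M^\infty \times_{\ol{P_0}} \ol{p}_{0,2}^{\ast}M^\infty$, the trace of $F$ on the embedded diagonal $\Delta(\ol{M}) \subset \ol{P_0}$ equals the Cartier divisor $M^\infty \subset \ol{M}$; hence the strict transform of $\Delta(\ol{M})$ in $\ol{P}$ is $\Bl_{M^\infty}(\ol{M}) = \ol{M}$, so $\ol{D} \cong \ol{M}$. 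Because both projections $p_1,p_2$ restrict to the identity on $\ol{D}$, a direct calculation with the formula $P^\infty = \pi_P^\ast \ol{p}_{0,1}^\ast M^\infty + \pi_P^\ast \ol{p}_{0,2}^\ast M^\infty - E$ gives $P^\infty|_{\ol{D}} = M^\infty$, completing the identification. Functoriality in $(s,t)\in\ulMEt$ follows from the universal property of fiber products together with the observation that $s^\o$, being an open immersion, is injective, hence sends off-diagonal to off-diagonal. The special cases (open-immersion $f^\o$ gives $W_f=\emptyset$; proper $M,N$ give proper $P$ and hence proper $\OD(f)$) are immediate.

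\emph{Main obstacle.} The one nontrivial verification is the identification $(\ol{D},P^\infty|_{\ol{D}}) \cong M$ in Step 3: the claim that the canonical-model blow-up restricts to an isomorphism over the diagonal because the blow-up locus cuts the diagonal in a Cartier divisor. Everything else is formal once this local geometric calculation is in hand.
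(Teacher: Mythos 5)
Your construction is essentially the one in the paper: decompose $M^\o\times_{N^\o}M^\o$ into the diagonal and its clopen complement using that the \'etale separated $f^\o$ has clopen diagonal, take scheme-theoretic closures of the two pieces in the canonical model $\ol{P}$ of $M\ctimes_N M$, equip them with the restricted divisor $P^\infty$, and observe that the resulting minimal proper morphism to $P$ is an interior isomorphism, hence lies in $\ul{\Sigma}_\fin$ and is invertible in $\ulMSm$; functoriality via injectivity of the open immersion $s^\o$ is also the paper's argument.

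The one place where you genuinely diverge is the identification of the diagonal component with $M$, which you single out as the main obstacle. You prove it by a blow-up computation: the center $F=\ol{p}_{0,1}^{\ast}M^\infty\times_{\ol{P}_0}\ol{p}_{0,2}^{\ast}M^\infty$ meets $\Delta(\ol{M})$ in the Cartier divisor $M^\infty$, so the strict transform is $\Bl_{M^\infty}(\ol{M})\cong\ol{M}$ and $P^\infty|_{\ol{D}}=M^\infty+M^\infty-M^\infty=M^\infty$. This is correct (the needed facts --- that $\ol{D}$ is the strict transform because $\Delta(M^\o)$ is dense in $\Delta(\ol{M})\setminus F$, and that $E|_{\ol{D}}$ is the exceptional divisor of $\Bl_{M^\infty}(\ol{M})$, namely $M^\infty$ itself --- all hold), and it yields the stronger statement that the ambient spaces agree. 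The paper avoids this computation entirely: it notes that $\Delta\colon M\to P$ factors through $\Delta(f)$ (on interiors it lands in the diagonal component) and that $\Delta(f)\to P\xrightarrow{\pr_1}M$ is admissible, and since morphisms in $\ulMSm$ are determined by their interiors the two composites are identities. Your route buys an explicit scheme-level description of $\ol{\Delta(f)}$ at the cost of a local verification; the paper's buys brevity by working only up to isomorphism in $\ulMSm$, which is all the theorem requires. Either way the argument is complete.
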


\begin{proof}
First, we prove that for any $f : M \to N$ in $\ulMEt$, there exists a morphism $i : X \to M \times_N M$ such that the induced morphism 
\[
M \sqcup X \xrightarrow{\Delta \sqcup i} M \times_N M
\]
is an isomorphism in $\ulMSm$.
Take any object $f : M \to N$ in $\ulMEt$.
Since $f^\o$ is \'etale and separated by the assumption, the diagonal morphism $\Delta : M^\o \to M^\o \times_{N^\o} M^\o$ is an open and closed immersion.
Therefore, we obtain a decomposition into two connected components:
\[
M^\o \times_{N^\o} M^\o = \Delta (M^\o) \sqcup (M^\o \times_{N^\o} M^\o - \Delta (M^\o)).
\]

Let $P$ denote the canonical model of fiber product $M \ctimes_N M$ as in Def. \ref{def:canonical-model}.
Note that $P^\o = M^\o \times_{N^\o} M^\o$.

Define a closed immersion $\ol{i}_{\Delta} : \ol{\Delta}(f) \to \ol{P}$ as the scheme-theoretic closure of the open immersion $\Delta (M^\o) \to P^\o \to \ol{P}$.
Set $\Delta (f)^\infty := \ol{i}_{\Delta}^\ast P^\infty$ and $\Delta (f):= (\ol{\Delta}(f), \Delta (f)^\infty)$.
Then $\ol{i}_\Delta$ induces a minimal morphism $i_\Delta : \Delta (f) \to P$. 
Moreover, we have $\Delta (f)^\o = \Delta (M^\o)$.

Similarly, define a closed immersion $\ol{i}_{\OD} : \ol{\OD (f)} \to \ol{P}$ as the scheme-theoretic closure of the open immersion $M^\o \times_{N^\o} M^\o - \Delta (M^\o) \to P^\o \to \ol{P}$.
Set $\OD(f)^\infty := \ol{i}_{\OD}^\ast P^\infty$ and $\OD (f) := (\ol{\OD (f)}, \OD (f)^\infty)$.
Then $\ol{i}_\OD$ induces a minimal morphism $i_\OD : \OD (f) \to P$.
Moreover, we have $\OD (f)^\o = M^\o \times_{N^\o} M^\o - \Delta (M^\o)$.

The morphisms $i_\Delta$ and $i_\OD$ induce a minimal morphism in $\ulMSm^\fin$:
\[
i_\Delta \sqcup i_\OD : \Delta (f) \sqcup \OD(f) \to P.
\]
By \eqref{def:mod-pair7} in Definition, \ref{def:mod-pair}, this morphism is an isomorphism in $\ulMSm$ (not in $\ulMSm^\fin$) since $(i_\Delta \sqcup i_\OD)^\o = i_\Delta^\o \sqcup i_\OD^\o : \Delta (f)^\o \sqcup \OD (f)^\o \to P^\o \cong M^\o \times_{N^\o} M^\o$ is an isomorphism in $\Sm$, and since $\ol{i}_\Delta \sqcup \ol{i}_\OD : \ol{\Delta (f)} \sqcup \ol{\OD (f)} \to \ol{P}$ is proper by construction. 

We claim $\Delta (f) \cong M$.
Let $\Delta : M \to P (\cong M \times_N M)$ be the diagonal morphism. 
Then, the composite  $M \xrightarrow{\Delta} P \cong \Delta (f) \sqcup \OD(f)$ factors through $\Delta (f)$.
The inverse morphism is given by $\Delta (f) \to P \xrightarrow{\mathrm{pr}_1} M$, where $\mathrm{pr}_1$ denotes the first projection $P \cong M \times_N M \to M$.

Thus, for any $f : M \to N$ in $\ulMEt$, we have obtained a decomposition 
\[
M \times_N M \cong M \sqcup \OD(f).
\]
Next we check the functoriality of $\OD(f)$.
Let $(f_1 : M_1 \to N_1) \to (f_2 : M_2 \to N_2)$ be a morphism in $\ulMEt$, i.e., a commutative diagram 
\[\xymatrix{
M_1 \ar[r]^{s} \ar[d]_{f_1} & M_2 \ar[d]^{f_2} \\
N_1 \ar[r]^{t} & N_2
}\]
where $f_1$, $f_2$, $s$ and $t$ are morphisms in $\ulMSm$ such that $f_1^\o$ and $f_2^\o$ are \'etale and $s^\o$ and $t^\o$ are open immersions.

We claim that there exists a unique morphism $\OD(f_1) \to \OD(f_2)$ such that the following diagram commutes:
\[\xymatrix{
M_1 \times_{N_1} M_1 \ar[r]  & M_2 \times_{N_2} M_2  \\
M_1 \sqcup \OD(f_1) \ar[r] \ar[u]^{\cong} & M_2 \sqcup \OD(f_2) \ar[u]_{\cong} .
}\]
The uniqueness is obvious by the commutativity of the above diagram.
For the existence, we need to show that the composite 
\[
\OD(f_1) \to  M_1 \times_{N_1} M_1 \to M_2 \times_{N_2} M_2 \cong M_2 \sqcup \OD(f_2)
\]
factors through $\OD(f_2)$.
To see this, it suffices to prove that the image of the morphism 
\[
M_1^\o \times_{N_1^\o} M_1^\o \setminus \Delta (M_1^\o) \to  M_1^\o \times_{N_1^\o} M_1^\o \xrightarrow{s^\o \times s^\o} M_2^\o \times_{N_2^\o} M_2^\o
\]
lands in $M_2^\o \times_{N_2^\o} M_2^\o \setminus \Delta (M_2^\o)$, which easily follows from the injectivity of the open immersion $s^\o$.
This finishes the proof.
\end{proof}

The off-diagonal functor is compatible with base change. 

\begin{prop}\label{prop:pullback-OD}
Let $f : M \to N$ be an object of $\ulMEt$, and $N' \to N$ any morphism in $\ulMSm$.
Then the base change $g:= f \times_N N'$ belongs to $\ulMEt$, and we have a natural isomorphism $\OD (g) \cong \OD (f) \times_N N'$.
\end{prop}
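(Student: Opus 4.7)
The plan is to begin from the fundamental decomposition $M \times_N M \cong M \sqcup \OD(f)$ furnished by Theorem \ref{thm:def-OD}, base-change it along $N' \to N$, and compare with the analogous decomposition for $g$. First I would check that $g \in \ulMEt$: by Corollary \ref{thm:fiber-product}, $g = f \times_N N'$ exists in $\ulMSm$ with $g^\o = f^\o \times_{N^\o} N'^\o$, and this morphism is \'etale and separated since these properties are stable under base change in $\Sm$. So $\OD(g)$ is defined, with $\OD(g)^\o = (M \times_N N')^\o \times_{N'^\o} (M \times_N N')^\o \setminus \Delta$.

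Next, associativity of fiber products in $\ulMSm$ yields
\[
(M \times_N N') \times_{N'} (M \times_N N') \cong (M \times_N M) \times_N N',
\]
into which I would substitute $M \times_N M \cong M \sqcup \OD(f)$. The intermediate ingredient is the distributivity
\[
(A \sqcup B) \times_C D \cong (A \times_C D) \sqcup (B \times_C D)
\]
in $\ulMSm$. I would verify this via the explicit canonical model of Definition \ref{def:canonical-model}: starting from $(\ol{A} \sqcup \ol{B}) \times_{\ol{C}} \ol{D} = (\ol{A} \times_{\ol{C}} \ol{D}) \sqcup (\ol{B} \times_{\ol{C}} \ol{D})$, both the closure of the interior and the subsequent blow-up along the universally-supremum locus split across the two open-and-closed components, so $(A \sqcup B) \ctimes_C D$ is literally the disjoint union of $A \ctimes_C D$ and $B \ctimes_C D$; it then represents the fiber product in $\ulMSm$ by Theorem \ref{thm:fiber-product-fin}.

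Combining the two ingredients gives
\[
M' \times_{N'} M' \cong (M \sqcup \OD(f)) \times_N N' \cong M' \sqcup (\OD(f) \times_N N'),
\]
where $M' := M \times_N N'$. On the other hand, Theorem \ref{thm:def-OD} applied to $g$ provides $M' \times_{N'} M' \cong M' \sqcup \OD(g)$. To identify the two $\OD$-summands, I would check that in both decompositions the $M'$-factor is the image of the diagonal: for the second this holds by construction, and for the first it follows by base-changing the diagonal $\Delta : M \to M \times_N M$ along $N' \to N$. Since interiors are preserved and $\OD(g)^\o = (\OD(f) \times_N N')^\o$ by the direct computation above, the remaining summands have identical interiors; their ambient structures agree by the uniqueness of the minimal decomposition established in the proof of Theorem \ref{thm:def-OD} (or, equivalently, by applying the canonical-model construction to each side). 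Naturality in $N' \to N$ is then formal from the uniqueness portion of Theorem \ref{thm:def-OD}.

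The main obstacle is the distributivity of fiber products over disjoint unions in $\ulMSm$: this is not automatic because the canonical model involves a closure-and-blow-up procedure, and one must check that each step respects the splitting of the ambient space. Once this is isolated, everything else is a formal consequence of the universal property of $\OD$.
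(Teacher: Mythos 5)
Your proposal is correct and follows essentially the same route as the paper: base-change the decomposition $M \times_N M \cong M \sqcup \OD(f)$ along $N' \to N$, use associativity of fiber products to identify the result with $M' \times_{N'} M'$, and match the off-diagonal summands. The only difference is that you explicitly justify the distributivity of fiber products over disjoint unions via the canonical model, a step the paper simply records as a ``natural isomorphism'' in its diagram; this is a worthwhile detail to have spelled out, but it does not change the argument.
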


\begin{proof}
The first assertion holds since $g^\o = f^\o \times_{N^\o} N'^\o$ is \'etale as a base change of an \'etale morphism.
We prove the second assertion. 
Note $(M \times_N M) \times_{N} N' \cong M' \times_{N'} M'$, where $M' := M \times_{N} N'$. 
Consider the following diagram in $\ulMSm$:
\[\xymatrix{
(M \times_N M) \times_{N} N'   \ar[d] & \ar[l] (M \sqcup \OD (f)) \times_{N} N' & \ar[l] \ar[d]^{h} M' \sqcup (\OD (f) \times_N N') \\
M' \times_{N'} M' & & \ar[ll] M' \sqcup \OD (g)
}\]
where all the arrows, except for $h$, are natural isomorphisms in $\ulMSm$, and $h$ is defined to be the composite. 
By diagram chase, $h$ restricts to the identity map on $M'$ and an isomorphism $\OD (f) \times_N N' \to \OD (g)$. 
This finishes the proof.
\end{proof}

\section{The cd-structure}\label{section:cd-structure}

In this section, we introduce a cd-structure on $\MSm$, and prove its fundamental properties. 

\subsection{$\protect\ulMV$-squares}

First, we recall from \cite{modsheaf1} the cd-structure on $\ulMSm$.

\begin{defn}\label{def:ulMV}
\ 
\begin{enumerate}
\item An \emph{$\ulMVfin$-square} is a square $S \in (\ulMSm^\fin )^\Sq$ such that the morphisms in $S$ are minimal, and such that the resulting square
\[\xymatrix{
\ol{S}(00) \ar[r] \ar[d] & \ol{S}(01) \ar[d] \\
\ol{S}(10) \ar[r] & \ol{S}(11)
}\]
is an elementary Nisnevich square (on $\Sch$).

\item An $\ulMV$-square is a square $S \in \ulMSm^\Sq$ which belongs to the essential image of the inclusion functor $(\ulMSm^\fin )^\Sq \to \ulMSm^\Sq$.
\end{enumerate}
\end{defn}

\begin{prop}[{\protect\cite[Prop. 3.2.2]{modsheaf1}}]
The $\ulMV$-squares form a complete and regular cd-structure $P_{\ulMV}$ on $\ulMSm$. \qed
\end{prop}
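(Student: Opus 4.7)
The plan is to verify the completeness and regularity axioms of \cite{cdstructures} by pulling everything back to the elementary Nisnevich cd-structure on $\Sch$ via the ambient-space functor $M \mapsto \ol{M}$, which by Definition \ref{def:ulMV} sends $\ulMVfin$-squares to elementary Nisnevich squares. The two workhorses will be the existence of fiber products in $\ulMSm$ (Corollary \ref{thm:fiber-product}) together with the ambient-replacement principle (Lemma \ref{lem:ambient-diagram} and the graph trick underlying it), which let me move back and forth between $\ulMSm$ and $\ulMSm^{\fin}$ at will.

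For completeness, suppose $S$ is a distinguished square with bottom-right vertex $X := S(11)$ and $f : M \to X$ is an arbitrary morphism in $\ulMSm$. I would first replace $f$ by an ambient morphism using the graph trick, and then form, for each $(i,j)$, the canonical model $M \ctimes_X S(ij)$ from Definition \ref{def:canonical-model}; by Theorem \ref{thm:fiber-product-fin} these realize the fiber products $M \times_X S(ij)$ in $\ulMSm$. Since the arrows of $S$ are minimal, Lemma \ref{lem:univ-sup} forces the pullback modulus divisor on $\ol{M} \times_{\ol{X}} \ol{S(ij)}$ to agree with the naive pullback (no blow-up is needed), so the resulting square is itself minimal with ambient square the pullback of an elementary Nisnevich square. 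By the standard stability of elementary Nisnevich squares under base change, this ambient pullback is again elementary Nisnevich, and the refinement is an $\ulMV$-square.

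For regularity, I would verify the three standard axioms in turn. Cartesianness in $\ulMSm$ follows from the construction: minimality of the arrows combined with cartesianness on ambient spaces forces the top-left corner to represent the canonical-model fiber product of the two adjacent arrows. The bottom arrow $S(10) \to S(11)$ is a monomorphism because its ambient part is an open immersion and two admissible morphisms agreeing after composition must already agree on interiors. The main obstacle will be the diagonal axiom for the right vertical arrow $p : S(01) \to S(11)$: since $p^{\o}$ is \'etale, $p$ lies in $\ulMEt$, so by Theorem \ref{thm:def-OD} we obtain a decomposition $S(01) \times_{S(11)} S(01) \cong S(01) \sqcup \OD(p)$. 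I would then argue that the resulting diagonal square---with $S(01)$ in one corner, $\OD(p)$ in another, and the decomposition in between---is again an $\ulMV$-square: its arrows are minimal by construction of the off-diagonal via scheme-theoretic closures inside the canonical model, and its ambient square is a standard diagonal elementary Nisnevich square on the normal ambient spaces. Tracking the modulus divisor through this decomposition via Lemma \ref{lem:univ-sup}, so as to confirm that the ``diagonal'' piece reproduces $S(01)$ with its original modulus rather than some thickening, is where the real work lies.
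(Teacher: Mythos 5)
The paper does not prove this proposition at all: it is quoted from \cite[Prop.~3.2.2]{modsheaf1} and closed with a tombstone, so there is no internal argument to compare yours against line by line. Judged on its own, your reconstruction is essentially sound, and it follows the same blueprint the paper later uses for the proper analogue (Theorems \ref{thm:completeness} and \ref{thm:regularity}): reduce to Voevodsky's sufficient criteria \cite[Lemmas 2.5 and 2.11]{cdstructures}, check stability under base change using the canonical model of Definition \ref{def:canonical-model} and Theorem \ref{thm:fiber-product-fin}, and handle the derived diagonal square via the decomposition $S(01)\times_{S(11)}S(01)\cong S(01)\sqcup\OD(p_S)$ of Theorem \ref{thm:def-OD} together with $\OD(q_S)\cong\OD(p_S)$ (Proposition \ref{prop:MV-OD}). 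Two points deserve to be made explicit. First, your ``no blow-up is needed'' step is exactly part (1) of the proposition following Remark \ref{rem:tau-fb}; to invoke it you must check that the naive pullback of the modulus divisor is still Cartier and that the interior is dense in the naive fiber product, both of which hold here only because the ambient maps in an elementary Nisnevich square are flat (\'etale or open immersions). Second, the worry you flag at the end --- that the diagonal component might reappear with a thickened modulus --- is already settled by Theorem \ref{thm:def-OD}, whose proof shows $\Delta(f)\cong S(01)$ in $\ulMSm$; combining Lemma \ref{lem:et-OD} with the minimality of $u_S$ also gives minimality of $\OD(q_S)\to\OD(p_S)$, so the derived square is an $\ulMVfin$-square up to isomorphism, as required. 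Finally, note that the $\OD$ formalism is introduced in the present paper, not in \cite{modsheaf1}, so your route is anachronistic relative to the cited source; it is nonetheless not circular, since Section \ref{section-OD} does not depend on this proposition.
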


\begin{defn}
The topology on $\ulMSm$ associated with the cd-structure $P_{\ulMV}$ is called \emph{the $\ulMV$-topology}.
\end{defn}

In the following, we describe $\OD$ for $\ulMV^\fin$ \& $\ulMV$-squares.

\begin{lemma}\label{lem:et-OD}
Let $f : U \to M$ be a minimal morphism such that $\ol{f} : \ol{U} \to \ol{M}$ is \'etale.
Then we have
\begin{align*}
&\ol{\OD (f)} = \ol{U} \times_{\ol{M}} \ol{U} - \Delta (\ol{U}) \\
&\OD (f)^\infty = \pi^\ast M^\infty \cap \ol{\OD (f)} ,
\end{align*}
where $\Delta : \ol{U} \to \ol{U} \times_{\ol{M}} \ol{U}$ is the diagonal, and $\pi : \ol{U} \times_{\ol{M}} \ol{U} \to \ol{M}$ is the natural morphism. 
\end{lemma}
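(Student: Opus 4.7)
The plan is to compute $\OD(f)$ by unpacking the constructions in Theorem~\ref{thm:def-OD} and Definition~\ref{def:canonical-model}, and observing that both the étaleness of $\ol{f}$ and the minimality of $f$ force the closures and blow-ups appearing there to be trivial.

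First, I would identify the scheme-theoretic closure $\ol{P}_0$ of $U^\o \times_{M^\o} U^\o$ in $\ol{U} \times_{\ol{M}} \ol{U}$ with $\ol{U} \times_{\ol{M}} \ol{U}$ itself. Since $\ol{f}$ is étale, the fiber product $\ol{U} \times_{\ol{M}} \ol{U}$ is étale, hence reduced, over the reduced scheme $\ol{U}$; and since $f$ is minimal, $U^\o = \ol{f}^{-1}(M^\o)$, so $U^\o \times_{M^\o} U^\o$ coincides with $\pi^{-1}(M^\o)$. Flatness makes $\pi$ send generic points to generic points, which forces every irreducible component of $\ol{U} \times_{\ol{M}} \ol{U}$ to meet $\pi^{-1}(M^\o)$ in its generic point; hence $U^\o \times_{M^\o} U^\o$ is open and dense, and its scheme-theoretic closure equals the whole reduced ambient space.

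Next, I would observe that minimality yields $\ol{p}_{0,1}^\ast U^\infty = \pi^\ast M^\infty = \ol{p}_{0,2}^\ast U^\infty$, so the blow-up center $(\ol{p}_{0,1}^\ast U^\infty)\times_{\ol{P}_0}(\ol{p}_{0,2}^\ast U^\infty)$ agrees as a closed subscheme with the Cartier divisor $\pi^\ast M^\infty$. Since blowing up along a Cartier divisor is an isomorphism, this gives $\ol{P} = \ol{U} \times_{\ol{M}} \ol{U}$, exceptional divisor $E = \pi^\ast M^\infty$, and $P^\infty = \pi^\ast M^\infty + \pi^\ast M^\infty - \pi^\ast M^\infty = \pi^\ast M^\infty$.

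Finally, since $\ol{f}$ is étale and separated, the diagonal $\Delta(\ol{U}) \subset \ol{U} \times_{\ol{M}} \ol{U}$ is open and closed; its complement is therefore reduced and contains $U^\o \times_{M^\o} U^\o \setminus \Delta(U^\o)$ as a dense open subset, by the same flatness argument as before. Thus the scheme-theoretic closure defining $\ol{\OD(f)}$ in the proof of Theorem~\ref{thm:def-OD} equals the reduced closed subscheme $\ol{U} \times_{\ol{M}} \ol{U} \setminus \Delta(\ol{U})$, and restricting $P^\infty = \pi^\ast M^\infty$ along the resulting closed immersion $\ol{\OD(f)} \hookrightarrow \ol{P}$ yields the stated formula $\OD(f)^\infty = \pi^\ast M^\infty \cap \ol{\OD(f)}$. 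I do not expect a serious obstacle: the only care needed is to verify the two density claims and the collapse of the blow-up along a Cartier divisor.
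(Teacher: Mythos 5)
Your proposal is correct and follows essentially the same route as the paper: the paper's (much terser) proof likewise rests on the density of $U^\o \times_{M^\o} U^\o \setminus \Delta(U^\o)$ in $\ol{U}\times_{\ol{M}}\ol{U}\setminus\Delta(\ol{U})$ and on the identification $\ol{p}_{0,1}^\ast U^\infty = \ol{p}_{0,2}^\ast U^\infty = \pi^\ast M^\infty$ forced by minimality, which is exactly what makes the blow-up in the canonical model collapse. Your write-up just makes explicit the steps the paper leaves to ``the construction of $\OD(f)$''.
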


\begin{proof}
Since $U^\o \times_{M^\o} U^\o - \Delta (U^\o)$ is dense in $\ol{U} \times_{\ol{M}} \ol{U} - \Delta (\ol{U})$ (as a complement of the divisor $U^\infty \times_{\ol{M}} \ol{U} \setminus \Delta (\ol{U})$), and since $U^\infty \times_{\ol{M}} \ol{U} = \ol{U} \times_{\ol{M}} U^\infty = \pi^\ast M^\infty$, the assertion follows from the construction of $\OD (f)$. This finishes the proof.
\end{proof}

\begin{prop}\label{prop:MV-OD}
Let $S$ be an $\ulMV^\fin$-square of the form 
\[\xymatrix{
S(00) \ar[r]^{v_S} \ar[d]_{q_S} & S(01) \ar[d]^{p_S} \\
S(10) \ar[r]^{u_S} & S(11).
}\]
Then the morphism $\OD (q_S) \to \OD (p_S)$ is an isomorphism in $\ulMSm^\fin$.
\end{prop}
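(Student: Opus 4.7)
The plan is to make both $\OD(q_S)$ and $\OD(p_S)$ completely explicit using Lemma \ref{lem:et-OD}, and then check directly that the induced ambient morphism is an isomorphism of schemes and that the modulus divisors match. Since the morphism $\OD(q_S)\to\OD(p_S)$ produced by the functoriality of $\OD$ comes from the ambient morphism of pairs $(v_S,u_S)$ (which is legitimate because the elementary Nisnevich condition makes $\ol{v_S}$ and $\ol{u_S}$ open immersions), it is automatically ambient and minimal in $\ulMSm^\fin$, so it will suffice to identify it on the level of ambient schemes and check the modulus divisor.

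First, because $q_S$ and $p_S$ are minimal with \'etale ambient maps, Lemma \ref{lem:et-OD} gives
\[
\ol{\OD(q_S)}=\ol{S(00)}\times_{\ol{S(10)}}\ol{S(00)}-\Delta(\ol{S(00)}),\qquad
\ol{\OD(p_S)}=\ol{S(01)}\times_{\ol{S(11)}}\ol{S(01)}-\Delta(\ol{S(01)}),
\]
with modulus divisors $\pi_q^{\ast}S(10)^\infty\cap\ol{\OD(q_S)}$ and $\pi_p^{\ast}S(11)^\infty\cap\ol{\OD(p_S)}$ respectively, where $\pi_q$ and $\pi_p$ denote the obvious projections to $\ol{S(10)}$ and $\ol{S(11)}$. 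Using the cartesianness of the underlying Nisnevich square, I would rewrite
\[
\ol{S(00)}\times_{\ol{S(10)}}\ol{S(00)}\;\cong\;\bigl(\ol{S(01)}\times_{\ol{S(11)}}\ol{S(01)}\bigr)\times_{\ol{S(11)}}\ol{S(10)},
\]
which identifies $\ol{\OD(q_S)}$ with the open preimage of $\ol{S(10)}$ inside $\ol{\OD(p_S)}$ (here the diagonal on the left corresponds to $\Delta(\ol{S(01)})\times_{\ol{S(11)}}\ol{S(10)}$, so removing the diagonals is compatible).

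The main obstacle is then to show this open inclusion is an equality, i.e.\ that the morphism $\ol{\OD(p_S)}\to\ol{S(11)}$ induced by $\pi_p$ set-theoretically lands inside $\ol{S(10)}$; once that is checked at the level of points it upgrades to a scheme-theoretic factorization because $\ol{S(10)}\hookrightarrow\ol{S(11)}$ is an open immersion. This is where the elementary Nisnevich condition $(\ol{S(01)}-\ol{S(00)})_{\red}\iso(\ol{S(11)}-\ol{S(10)})_{\red}$ is decisive: a point of $\ol{\OD(p_S)}$ corresponds to a pair $(x,y)$ of distinct points of $\ol{S(01)}$ with a common image $z$ in $\ol{S(11)}$, and if $z\notin\ol{S(10)}$ then $x,y\in\ol{S(01)}-\ol{S(00)}$ would give two preimages of $z$ in $(\ol{S(01)}-\ol{S(00)})_{\red}$, contradicting the injectivity supplied by the Nisnevich isomorphism. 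Hence $\ol{\OD(q_S)}\to\ol{\OD(p_S)}$ is an isomorphism of schemes.

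Finally I would verify the modulus divisors coincide. Under the identification just obtained, $\pi_q$ equals the restriction of $\pi_p$, and the minimality of $u_S$ gives $S(10)^\infty=\ol{u_S}^{\ast}S(11)^\infty$, so
\[
\pi_q^{\ast}S(10)^\infty=\pi_q^{\ast}\ol{u_S}^{\ast}S(11)^\infty=\pi_p^{\ast}S(11)^\infty\big|_{\ol{\OD(q_S)}}=\OD(p_S)^\infty\big|_{\ol{\OD(q_S)}}.
\]
Combined with the previous step, this shows that the natural map $\OD(q_S)\to\OD(p_S)$ is a minimal ambient morphism whose ambient part is an isomorphism, hence an isomorphism in $\ulMSm^\fin$.
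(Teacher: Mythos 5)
Your proof follows essentially the same route as the paper's: the paper simply records the natural isomorphism between the two off-diagonal schemes as a standard property of elementary Nisnevich squares and then invokes Lemma \ref{lem:et-OD} together with minimality of $u_S$, $p_S$, $q_S$; you supply the details of that scheme-level isomorphism, which is a welcome addition.

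One small imprecision in the point-level step: a point of $\ol{\OD(p_S)}$ lying over $z$ need not project to two \emph{distinct} points of $\ol{S}(01)$. For instance, if $L/k(z)$ is a nontrivial finite separable extension, then $\Spec(L\otimes_{k(z)}L)$ has off-diagonal components whose two projections land on the \emph{same} point, so injectivity of $(\ol{S}(01)-\ol{S}(00))_{\red}\to(\ol{S}(11)-\ol{S}(10))_{\red}$ alone does not yield the contradiction. You also need that this map is an isomorphism on residue fields: for $z\notin\ol{S}(10)$ the whole fibre $\ol{S}(01)_z$ lies in $\ol{S}(01)-\ol{S}(00)$ and is reduced (being \'etale over $k(z)$), hence is either empty or a single $k(z)$-point, so $\ol{S}(01)_z\times_{k(z)}\ol{S}(01)_z$ is entirely diagonal and $\ol{\OD(p_S)}$ has no points over $z$. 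With that adjustment the argument is complete.
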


\begin{proof}
Let $S$ be an $\ulMV^\fin$-square. 
Then, since $\ol{S}$ is an elementary Nisnevich square, we have a natural isomorphism
\[
\ol{S}(00) \times_{\ol{S}(10)} \ol{S}(00) - \Delta_0 (\ol{S}(00)) \xrightarrow{\sim} \ol{S}(01) \times_{\ol{S}(11)} \ol{S}(01) - \Delta_1 (\ol{S}(01)),
\]
where $\Delta_i : \ol{S}(0i) \to \ol{S}(0i) \times_{\ol{S}(1i)} \ol{S}(0i)$ is the diagonal for each $i=0,1$.
Then, in view of Lemma \ref{lem:et-OD}, the minimality of $u_S,p_S,q_S$ shows that the isomorphism as above induces an isomorphism $\OD (q_S) \to \OD (p_S)$ in $\ulMSm^\fin$.
This finishes the proof.
\end{proof}

\begin{cor}
Let $S$ be an $\ulMV$-square. Then the natural morphism $\OD (q_S) \to \OD (p_S)$ is an isomorphism in $\ulMSm$.
\end{cor}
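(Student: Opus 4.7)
The plan is to reduce to the $\ulMVfin$ case (Proposition \ref{prop:MV-OD}) via the definitional isomorphism and then invoke functoriality of $\OD$. By Definition \ref{def:ulMV}(2), there exist an $\ulMVfin$-square $S'$ and an isomorphism $\phi : S' \to S$ in $\ulMSm^\Sq$. Each component $\phi(ij) : S'(ij) \to S(ij)$ is then an isomorphism in $\ulMSm$, so $\phi(ij)^\o$ is an isomorphism of schemes and in particular an open immersion.

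First I would verify that all morphisms in play live in $\ulMEt$. Since $S'$ is an $\ulMVfin$-square, the verticals $q_{S'}, p_{S'}$ are minimal with \'etale ambient part, so their interiors are \'etale; similarly the horizontal interiors $v_{S'}^\o, u_{S'}^\o$ are open immersions. Transporting across the componentwise isomorphisms $\phi(ij)^\o$ shows that $q_S^\o, p_S^\o$ are \'etale and $v_S^\o, u_S^\o$ are open immersions (both being properties of morphisms of schemes invariant under isomorphism). Hence $(v_S, u_S) : q_S \to p_S$ and $(v_{S'}, u_{S'}) : q_{S'} \to p_{S'}$ are morphisms in $\ulMEt$, while $(\phi(00), \phi(10)) : q_{S'} \to q_S$ and $(\phi(01), \phi(11)) : p_{S'} \to p_S$ are \emph{iso}morphisms in $\ulMEt$; the naturality of $\phi$ makes the obvious square in $\ulMEt$ commute.

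Next I would apply the off-diagonal functor $\OD : \ulMEt \to \ulMSm$ from Theorem \ref{thm:def-OD} to this commutative square, obtaining a commutative diagram
\[\xymatrix{
\OD(q_{S'}) \ar[r] \ar[d]_{\cong} & \OD(p_{S'}) \ar[d]^{\cong} \\
\OD(q_S) \ar[r] & \OD(p_S)
}\]
in $\ulMSm$. The vertical arrows are isomorphisms because any functor preserves isomorphisms, and the top horizontal arrow is an isomorphism in $\ulMSm^\fin \subset \ulMSm$ by Proposition \ref{prop:MV-OD} applied to $S'$. The two-out-of-three property for isomorphisms forces the bottom arrow---which by functoriality of $\OD$ is precisely the natural morphism $\OD(q_S) \to \OD(p_S)$ from the statement---to be an isomorphism in $\ulMSm$. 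The only real obstacle is the mild bookkeeping of confirming that $\phi$ produces genuine morphisms in $\ulMEt$; once this is checked, the corollary is immediate from Proposition \ref{prop:MV-OD} and the functoriality of $\OD$.
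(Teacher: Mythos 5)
Your proposal is correct and follows the same route as the paper: pass to an $\ulMVfin$-square $S'$ isomorphic to $S$, note that $\OD(q_S)\cong\OD(q_{S'})$ and $\OD(p_S)\cong\OD(p_{S'})$, and conclude from Proposition \ref{prop:MV-OD}. You merely spell out in more detail (via the functoriality of $\OD$ on $\ulMEt$ and two-out-of-three) what the paper leaves implicit, and your bookkeeping that the componentwise isomorphisms yield morphisms in $\ulMEt$ is accurate.
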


\begin{proof}
By definition of $\ulMV$-square, there exists an $\ulMVfin$-square $S'$ which is isomorphic to $S$. Then, noting that there are natural isomorphisms $\OD (q_{S}) \cong \OD (q_{S'})$ and $\OD (p_{S}) \cong \OD (p_{S'})$ in $\ulMSm$, the assertion follows from Proposition \ref{prop:MV-OD}.
\end{proof}

\subsection{$\protect\MV$-squares}

\begin{defn}\label{def:new-MV}
Let $T$ be an object of $\MSm^\Sq$ of the form \eqref{eq:square-T}.
Then $T$ is called an \emph{$\MV$-square} if the following conditions hold:
\begin{enumerate}
\item \label{thm:new-MV1} $T$ is a pull-back square in $\MSm$.
\item \label{thm:new-MV2} There exist an $\ulMV$-square $S$ such that $S(11) \in \MSm$ and a morphism $S \to T$ in $\ulMSm^\Sq$ such that the induced morphism $S^\o \to T^\o$ is an isomorphism in $\Sm^\Sq$ and $S(11) \to T(11)$ is an isomorphism in $\MSm$. 
In particular, $T^\o$ is an elementary Nisnevich square. 
\item \label{thm:new-MV3} $\OD(q_T) \to \OD (p_T)$ is an isomorphism in $\MSm$.
\end{enumerate}

We let $P_{\MV}$ be the cd-structure on $\MSm$ consisting of $\MV$-squares. 
The topology on $\MSm$ associated with the cd-structure $P_{\MV}$ is called \emph{the $\MV$-topology} for short.
\end{defn}

\begin{remark}\label{rem:new-OD}
\ 

\begin{enumerate}
\item \label{rem:new-OD1} For any $T \in \MSm^\Sq$ with $T^\o$ an elementary Nisnevich square, the induced morphism $\OD (p_T)^\o \to \OD (q_T)^\o$ between interiors is an isomorphism in $\Sm$.
This follows easily from the definition of elementary Nisnevich squares.
\item \label{rem:new-OD2} If $p_T^\o$ and $q_T^\o$ are open immersions, then $\OD (q_T) = \OD (p_T) = \emptyset$. In particular, we have $\OD (q_T ) \cong \OD (p_T)$.
\end{enumerate}
\end{remark}

\begin{prop}\label{MV-bc}
Let $T$ be a square in $\MSm^\Sq$ which satisfies Condition \eqref{thm:new-MV1} (resp. \eqref{thm:new-MV2}, resp. \eqref{thm:new-MV3}).
Then, for any morphism $M \to T(11)$ in $\MSm$, the base change square $T_M := T \times_{T(11)} M$ satisfies  \eqref{thm:new-MV1} (resp. \eqref{thm:new-MV2}, resp. \eqref{thm:new-MV3}).
\end{prop}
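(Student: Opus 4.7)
The plan is to verify each of the three defining conditions of $\MV$-squares separately. Conditions (1) and (3) will follow formally from the existence of fibre products in $\MSm$ and from the compatibility of $\OD$ with base change; condition (2) is the technical heart.

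For condition (1), a direct categorical argument suffices: fibre products exist in $\MSm$ by Corollary \ref{thm:fiber-product}, and since $T(00) \cong T(01) \times_{T(11)} T(10)$, associativity of fibre products gives $T_M(00) \cong T_M(01) \times_{T_M(11)} T_M(10)$, so $T_M$ is cartesian. For condition (3), the hypothesis implicitly provides $q_T, p_T \in \MEt$, and since étaleness on interiors is preserved by base change, $q_{T_M}, p_{T_M} \in \MEt$ as well. Proposition \ref{prop:pullback-OD} then yields
$$
\OD(q_{T_M}) \cong \OD(q_T) \times_{T(10)} T_M(10) \cong \OD(q_T) \times_{T(11)} M,
$$
and similarly $\OD(p_{T_M}) \cong \OD(p_T) \times_{T(11)} M$. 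The given isomorphism $\OD(q_T) \xrightarrow{\sim} \OD(p_T)$ is a morphism over $T(11)$, being induced by the functoriality of $\OD$ applied to the morphism of $\ulMEt$-objects $q_T \to p_T$ provided by $T$ itself. Pulling this isomorphism back along $M \to T(11)$ yields the required isomorphism $\OD(q_{T_M}) \cong \OD(p_{T_M})$.

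For condition (2), let $S$ be an $\ulMV$-square with $S(11) \cong T(11)$ and $\phi: S \to T$ a morphism as in the hypothesis. I will define $S' \in \ulMSm^\Sq$ componentwise by $S'(ij) := S(ij) \times_{S(11)} M$ for $ij \neq 11$, using the identification $S(11) \cong T(11)$ and Corollary \ref{thm:fiber-product}, and $S'(11) := M$. The morphism $S' \to T_M$ induced by $\phi$ and the universal property of fibre products restricts to an isomorphism on interiors in $\Sm^\Sq$, and is the identity at the $(11)$-corner. The crux is to verify that $S'$ is an $\ulMV$-square. To this end, I would realize $S$ by an ambient $\ulMVfin$-square $S^\fin$ via Lemma \ref{lem:ambient-diagram}, and, using the equivalence $\ulMSm \simeq \ul{\Sigma}_\fin^{-1}\ulMSm^\fin$, arrange that $M \to S^\fin(11)$ is represented by an ambient morphism. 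Since the ambient morphisms in $\ol{S^\fin}$ are étale or open immersions (hence flat), the preceding proposition on canonical fibre products applies and gives the clean formula
$$
S^\fin(ij) \ctimes_{S^\fin(11)} M \cong (\overline{S^\fin(ij)} \times_{\overline{S^\fin(11)}} \overline{M},\ \overline{S^\fin(ij)} \times_{\overline{S^\fin(11)}} M^\infty),
$$
together with minimality of the resulting structure morphisms. The ambient square is then the base change of the elementary Nisnevich square $\ol{S^\fin}$ along $\ol{M} \to \ol{S^\fin(11)}$, which is again an elementary Nisnevich square in $\Sch$. Thus $S'$ admits an $\ulMVfin$-representative and is an $\ulMV$-square.

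The main obstacle is condition (2): carrying out the reduction to ambient morphisms and checking that the canonical fibre product of an $\ulMVfin$-square with $M$ yields an $\ulMVfin$-square. Once this is established, conditions (1) and (3) are essentially formal.
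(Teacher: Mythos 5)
Your proof is correct and follows the same overall strategy as the paper: conditions (1) and (3) are handled formally (stability of pull-back squares under base change, and Proposition \ref{prop:pullback-OD} together with the fact that the canonical morphism $\OD(q_T)\to\OD(p_T)$ lives over $T(11)$), and condition (2) is handled by base-changing the witnessing $\ulMV$-square $S$ along $M\to S(11)\cong T(11)$ to obtain a witness $S_M\to T_M$ with $S_M(11)\cong T_M(11)$ and $S_M^\o\cong T_M^\o$. The one place you diverge is the verification that $S_M$ is again an $\ulMV$-square: the paper simply invokes \cite[Theorem 4.1.2]{modsheaf1}, whereas you re-derive this stability by reducing to an ambient $\ulMVfin$-representative and computing the canonical models $S^\fin(ij)\ctimes_{S^\fin(11)}M$ via the proposition of \S 2.2, using flatness of the \'etale/open ambient morphisms to see that the blow-up is trivial and that the resulting ambient square is the Nisnevich base change of $\ol{S^\fin}$. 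That argument is sound and self-contained, but it is doing work the paper outsources; if you prefer not to reprove it, you can replace that paragraph by the citation.
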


\begin{proof}
Since base change of a pull-back diagram is a pull-back diagram, Condition \eqref{thm:new-MV1} is preserved by base change. 
Prop. \ref{prop:pullback-OD} shows that \eqref{thm:new-MV3} is preserved by the base change. 

Finally, we prove that Condition \eqref{thm:new-MV2} is preserved by base change. 
Let $S \to T$ be a morphism as in \eqref{thm:new-MV2}, and let $M \to T(11)$ be any morphism in $\MSm$. 
Then we obtain a morphism $S_M \to T_M$, where $S_M := S \times_{S(11)} M$ and $T_M := T \times_{T(11)} M$. 
Since $S(11) \cong T(11)$, we obtain $S_M (11) \cong T_M(11)$. 
Moreover, $S_M$ is an $\ulMV$-square as the base change of an $\ulMV$-square (see \cite[Theorem 4.1.2]{modsheaf1}), and we have $S_M^\o \cong T_M^\o$.
Therefore, the morphism $S_M \to T_M$ satisfies the requirement in \eqref{thm:new-MV2}.
This finishes the proof.
\end{proof}

\subsection{Completeness}

\begin{thm}\label{thm:completeness}
The cd-structure $P_{\MV}$ is complete. 
\end{thm}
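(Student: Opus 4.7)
The plan is to deduce completeness directly from the base-change stability of $\MV$-squares established in Proposition \ref{MV-bc}, combined with the existence of fiber products in $\MSm$. Recall from \cite[Lemma 2.5]{cdstructures} (a criterion equivalent to Voevodsky's Definition 2.3) that a cd-structure $P$ on a category admitting fiber products is complete provided that, for every distinguished square $T \in P$ with $T(11) = X$ and every morphism $f : M \to X$, the base change $T \times_X M$ is again a distinguished square in $P$.

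In our setting, fiber products in $\MSm$ are supplied by Corollary \ref{thm:fiber-product} together with Remark \ref{rem:tau-fb}, so the base-changed square $T_M := T \times_{T(11)} M$ is well-defined in $\MSm^{\Sq}$. By Proposition \ref{MV-bc}, each of the three defining conditions \eqref{thm:new-MV1}, \eqref{thm:new-MV2}, \eqref{thm:new-MV3} of an $\MV$-square is preserved under base change along $f$, so $T_M$ is an $\MV$-square, i.e.\ $T_M \in P_{\MV}$. This is exactly what the completeness criterion demands.

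I do not expect a serious obstacle: once Proposition \ref{MV-bc} is available, completeness is essentially formal, and no new geometric input is needed. The only small matter worth double-checking is the edge case $T(11) = \emptyset$, which by Definition \ref{def:new-MV}\eqref{thm:new-MV1} forces $T$ itself to be the empty square, and for which the base-change assertion is trivial. Thus the theorem will follow immediately from Proposition \ref{MV-bc} and Corollary \ref{thm:fiber-product}, with all the substantive work having been carried out in the previous subsection.
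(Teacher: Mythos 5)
Your proposal is correct and follows essentially the same route as the paper: invoke \cite[Lemma 2.5]{cdstructures} and reduce completeness to base-change stability of $\MV$-squares, which is exactly Proposition \ref{MV-bc}. The only cosmetic difference is that the paper verifies the other hypothesis of that lemma as ``any morphism with values in $\emptyset$ is an isomorphism'' (strictness of the initial object) rather than your edge-case discussion of $T(11)=\emptyset$, but both are trivial observations.
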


\begin{proof}
By \cite[Lemma 2.5]{cdstructures}, it suffices to prove the following assertions:
\begin{enumerate}
\item Any morphism with values in $\emptyset = (\emptyset , \emptyset)$ is an isomorphism. 
\item For any $T \in P_{\MV}$ and for any $M \to T(11)$ in $\MSm$, the square $T_M := T \times_{T(11)} M$, which is obtained by base change, belongs to $P_{\MV}$.
\end{enumerate}
(1) is obvious, and (2) is a direct consequence of Proposition \ref{MV-bc}.
\end{proof}

\subsection{Regularity}

\begin{thm}\label{thm:regularity}
The cd-structure $P_{\MV}$ is regular. 
\end{thm}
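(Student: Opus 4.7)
The plan is to verify the three conditions of regularity in \cite[Def.~2.10]{cdstructures}: every $T \in P_{\MV}$ is a pullback, the horizontal arrow $u_T$ is a monomorphism, and the associated ``diagonal square'' is distinguished. The first condition is built directly into Definition~\ref{def:new-MV}(1), so nothing needs to be checked there.

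For the monomorphism condition, I would exploit Definition~\ref{def:new-MV}(2), which guarantees that $T^\o$ is an elementary Nisnevich square in $\Sm$; in particular $u_T^\o$ is an open immersion, hence a monomorphism in $\Sm$. Since two admissible morphisms with the same source and target agree if and only if their restrictions to interiors agree (an admissible morphism in $\MSm$ is by construction a morphism on interiors satisfying the properness and modulus conditions, cf.\ Definition~\ref{def:mod-pair}\eqref{def:mod-pair3}), this lifts to the statement that $u_T$ is a monomorphism in $\MSm$. The identical argument shows $v_T$ is a monomorphism.

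For the diagonal square condition, the key input is the off-diagonal functor. Since $u_T^\o$ is an open immersion, the diagonal $\Delta : T(10)^\o \to T(10)^\o \times_{T(11)^\o} T(10)^\o$ is already an isomorphism in $\Sm$; in particular $u_T \in \MEt$ and $\OD(u_T)^\o = \emptyset$. By the last sentence of Theorem~\ref{thm:def-OD}, this forces $\OD(u_T) = \emptyset$, so the decomposition $T(10) \times_{T(11)} T(10) \cong T(10) \sqcup \OD(u_T)$ collapses to an isomorphism $T(10) \cong T(10) \times_{T(11)} T(10)$ realized by the diagonal. Running the same reasoning for $v_T$ gives $T(00) \cong T(00) \times_{T(01)} T(00)$. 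Consequently the diagonal square reduces to the trivial square whose horizontal arrows are identities, which is vacuously distinguished.

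The only real content in the argument is the vanishing $\OD(f) = \emptyset$ whenever $f^\o$ is an open immersion, and this is precisely what Theorem~\ref{thm:def-OD} provides. The subtlety I would flag is the passage from an isomorphism on interiors to a genuine isomorphism in $\MSm$; this is not automatic in general, but it is exactly the functorial decomposition $M \times_N M \cong M \sqcup \OD(f)$ supplied by Theorem~\ref{thm:def-OD} that bridges the gap. After this observation the proof is essentially a formal unwinding.
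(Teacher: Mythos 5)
There is a genuine gap, and it comes from an orientation error in forming the derived square. In Voevodsky's Lemma 2.11 (and in the paper's proof), the derived square $d(T)$ attached to
\[\xymatrix{
T(00) \ar[r]^{v_T} \ar[d]_{q_T} & T(01) \ar[d]^{p_T} \\
T(10) \ar[r]^{u_T} & T(11)
}\]
is built from the self-fiber-products along the \emph{vertical} (\'etale) legs, namely $T(01) \times_{T(11)} T(01)$ and $T(00) \times_{T(10)} T(00)$, with the diagonals $\Delta_{p_T}$ and $\Delta_{q_T}$ as its vertical arrows. You instead formed the self-fiber-products along the \emph{horizontal} legs $u_T$ and $v_T$, i.e.\ $T(10) \times_{T(11)} T(10)$ and $T(00) \times_{T(01)} T(00)$. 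Because $u_T^\o$ and $v_T^\o$ are open immersions those do collapse, but that is not the square Lemma 2.11 asks about. For the correct derived square, $p_T^\o$ and $q_T^\o$ are \'etale and generally \emph{not} open immersions, so $\OD(p_T)$ and $\OD(q_T)$ are generally nonempty and $d(T)$ does not reduce to a trivial square. Your remark that ``the only real content is the vanishing $\OD(f)=\emptyset$ for $f^\o$ an open immersion'' therefore misidentifies where the work lies.

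The actual content of the paper's argument is: (a) identify $d(T)$ with the square whose bottom row is $T(00)\sqcup\OD(q_T) \to T(01)\sqcup\OD(p_T)$ via Theorem \ref{thm:def-OD}; (b) verify condition \eqref{thm:new-MV3} for $d(T)$ by noting that the diagonals $\Delta_{p_T}^\o$, $\Delta_{q_T}^\o$ are open immersions, so their off-diagonals vanish (this is the only place your observation applies); (c) check $d(T)$ is a pullback; and (d) — the step your proposal cannot reach — verify condition \eqref{thm:new-MV2} for $d(T)$ by exhibiting an auxiliary $\ulMV$-square $d(T)_0$ with corners $(T(00)^\o,\emptyset)$, which is possible precisely because $\OD(q_T)\cong\OD(p_T)$ by hypothesis \eqref{thm:new-MV3} on $T$. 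That hypothesis is the essential input to regularity and plays no role in your argument, which is a sign the argument is proving the wrong statement. Your verification of the pullback and monomorphism conditions is fine and agrees with the paper.
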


\begin{proof}
By \cite[Lemma 2.11]{cdstructures}, it suffices to prove that for any $T \in P_{\MV}$, the following assertions hold:
\begin{enumerate}
\item $T$ is a pull-back square in $\MSm$.
\item $u_T : T(10) \to T(11)$ is a monomorphism.
\item The fiber products $T(01) \times_{T(11)} T(01)$ and $T(00) \times_{T(10)} T(00)$ exist in $\MSm$, and the derived square
\[\xymatrix{
T(00) \ar[r] \ar[d]_{\Delta_{q_T}} & T(01) \ar[d]^{\Delta_{p_T}} \\
T(00) \times_{T(10)} T(00) \ar[r] & T(01) \times_{T(11)} T(01)
}\]
which we denote by $d(T)$, belongs to $P_{\MV}$.
\end{enumerate}

(1) is by the definition of $\MV$-squares. 
(2) holds since $u_T^\o : T^\o (10) \to T^\o (11)$ is an open immersion. 
We prove (3): we check the conditions in Def. \ref{def:new-MV} for $d(T)$.

Since $\Delta_{p_T}^\o$ and $\Delta_{q_T}^\o$ are open immersions, we have $\OD (\Delta_{q_T}) \cong \emptyset \cong \OD (\Delta_{p_T})$ by Theorem \ref{thm:def-OD}. 
Hence $d(T)$ satisfies \eqref{thm:new-MV3} in Def. \ref{def:new-MV}.

Note that $d(T)$ is isomorphic in $\MSm^\Sq$ to the following diagram:
\[\xymatrix{
T(00) \ar[r] \ar[d]_{} & T(01) \ar[d]^{} \\
T(00) \sqcup \OD (q_T) \ar[r]^{} & T(01) \sqcup \OD (p_T)
}\]
where the vertical maps are the canonical inclusions, and the horizontal maps are induced by $v_T$.
It is easy to see that this diagram is a pull-back diagram, i.e., $d(T)$ satisfies \eqref{thm:new-MV1} in Def. \ref{def:new-MV}.
Indeed, suppose that we are given a pair of morphisms $f : M \to T(01)$ and $g : M \to T(00) \sqcup \OD (q_T)$ which coincide at $T(01) \sqcup \OD (p_T)$.
Then, one sees that $g^\o : M^\o \to T(00)^\o \sqcup \OD (q_T)^\o$ factors through $T(00)^\o$, which implies that $g$ factors through $T(00)$.

We are reduced to checking the condition \eqref{thm:new-MV2} for $d(T)$.
Consider the following diagram in $\ulMSm$:
\[\xymatrix{
(T(00)^\o,\emptyset) \ar[r] \ar[d]_{} & T(01) \ar[d]^{} \\
(T(00)^\o,\emptyset)  \sqcup \OD (q_T) \ar[r]^{} & T(01) \sqcup \OD (p_T)
}\]
which we denote by $d(T)_0$, 
where the vertical maps are the canonical inclusions. 
Then $d(T)_0$ is an $\ulMV$-square since $\OD (q_T) \cong \OD (p_T)$, and there exists a natural morphism $d(T)_0 \to d(T)$.
It induces an isomorphism $d(T)_0^\o \cong d(T)^\o$, and we have $d(T)_0(11) \cong d(T)(11)$.
Therefore, $d(T)$ satisfies \eqref{thm:new-MV2} in Def. \ref{def:new-MV}.
This finishes the proof.
\end{proof}

\begin{thm}\label{thm:sheaf-criterion}
Let $F$ be a presheaf with values in $\Sets$ on $\MSm$. 
Then $F$ is a sheaf with respect to the $\MV$-topology if and only if $F(\emptyset ) = 0$ and for any $\ulMV$-square $T \in P_{\MV}$, the square 
\[\xymatrix{
F(T(11)) \ar[r] \ar[d] & F(T(10)) \ar[d] \\ 
F(T(01)) \ar[r] & F(T(00))
}\]
is cartesian. 
\end{thm}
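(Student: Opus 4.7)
The plan is to invoke Voevodsky's general criterion \cite[Cor. 2.17]{cdstructures}, which states that for any complete and regular cd-structure $P$ on a category $\mathcal{C}$ with initial object $\emptyset$, a presheaf of sets $F$ on $\mathcal{C}$ is a sheaf for the associated Grothendieck topology if and only if $F(\emptyset) = \{\ast\}$ and $F$ sends every distinguished square $T \in P$ to a cartesian square of sets. This is exactly the shape of the criterion in the statement.

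Concretely, I would first note that $\MSm$ has $\emptyset = (\emptyset,\emptyset)$ as an initial object (one checks trivially from Definition \ref{def:mod-pair} that any admissible morphism from $\emptyset$ exists uniquely). Then I would invoke the two preceding theorems: $P_{\MV}$ is complete by Theorem \ref{thm:completeness} and regular by Theorem \ref{thm:regularity}. These two inputs are exactly the hypotheses needed in \cite[Cor. 2.17]{cdstructures}, so the equivalence of the two conditions follows without further work.

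Thus the proof is a one-line reference, with the real content being packaged into Theorems \ref{thm:completeness} and \ref{thm:regularity}. There is no genuine obstacle at this stage; the only subtlety to point out is that although Voevodsky originally formulated the criterion for presheaves on a small category, the result applies in our setting because the distinguished squares in $P_{\MV}$ are stable under the isomorphisms needed, and $\MSm$ admits the fiber products required in the definition of regularity (Corollary \ref{thm:fiber-product} and Remark \ref{rem:tau-fb}). With these remarks in place, the theorem is immediate.
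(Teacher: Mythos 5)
Your proposal is correct and is essentially identical to the paper's own proof, which likewise cites \cite[Cor. 2.17]{cdstructures} together with Theorem \ref{thm:completeness} and Theorem \ref{thm:regularity}. The extra remarks about the initial object and the existence of fiber products are harmless but not needed beyond what those two theorems already package.
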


\begin{proof}
This follows from \cite[Corollary 2.17]{cdstructures}, Theorem \ref{thm:completeness} and Theorem \ref{thm:regularity}.
\end{proof}

\subsection{Subcanonicity}\label{subsection:subcanonicality}

In this subsection, we prove the following result.
Recall that a Grothendieck topology is \emph{subcanonical} if every representable presheaf is a sheaf. 

\begin{thm}\label{thm:subcanonicality}
The $\ulMV$-topology and the $\MV$-topology are subcanonical.
\end{thm}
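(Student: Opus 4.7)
My plan is to establish subcanonicity of the $\ulMV$-topology first and then deduce the $\MV$-case. Since $P_{\ulMV}$ is complete and regular, the analogue of the sheaf criterion of Theorem~\ref{thm:sheaf-criterion} for $\ulMSm$ reduces the $\ulMV$-case to the following: for every $L \in \ulMSm$ and every $\ulMV$-square $S$ --- which I may take to be an $\ulMVfin$-square by \eqref{def:mod-pair9} of Def.~\ref{def:mod-pair} --- any compatible pair $\alpha_1 : S(10) \to L$, $\alpha_2 : S(01) \to L$ extends uniquely to $\beta : S(11) \to L$. On interiors, $S^\o$ is an elementary Nisnevich square in $\Sm$, so classical subcanonicity of the Nisnevich topology on $\Sm$ produces a unique $\beta^\o : S(11)^\o \to L^\o$ compatible with $\alpha_i^\o$. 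The content of the proof is to show that $\beta^\o$ is admissible.

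Let $\Gamma \subset \ol{S(11)} \times \ol{L}$ denote the closure of $\mathrm{graph}(\beta^\o)$ and let $\Gamma_i$ denote the closures of $\mathrm{graph}(\alpha_i^\o)$. Because the ambient square of the $\ulMVfin$-square $S$ is an elementary Nisnevich square in $\Sch$, the morphism $\pi : \ol{S(10)} \sqcup \ol{S(01)} \to \ol{S(11)}$ is \'etale and surjective. Using that closure commutes with flat base change in the presence of a dense open (here $\mathrm{graph}(\beta^\o) \subset \Gamma$), one gets canonical identifications $\Gamma \times_{\ol{S(11)}} \ol{S(10)} \cong \Gamma_1$ and $\Gamma \times_{\ol{S(11)}} \ol{S(01)} \cong \Gamma_2$. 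Properness of $\Gamma \to \ol{S(11)}$ then follows from fpqc descent of properness applied to the admissibility of $\alpha_1, \alpha_2$. Normalization commutes with \'etale base change, so one also has $\Gamma^N \times_{\ol{S(11)}} \ol{S(10)} \cong \Gamma_1^N$ and the analogue for $\Gamma_2^N$; combining this with the minimality of the edges of $S$ (so that $S(10)^\infty$ and $S(01)^\infty$ are pullbacks of $S(11)^\infty$), the admissibility inequalities for $\alpha_i$ become precisely the pullback along the faithfully flat cover $\Gamma_1^N \sqcup \Gamma_2^N \to \Gamma^N$ of the desired inequality $S(11)^\infty|_{\Gamma^N} \geq L^\infty|_{\Gamma^N}$. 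Faithfully flat descent of effectiveness of a Cartier divisor then finishes the modulus condition. Uniqueness of $\beta$ is automatic from that of $\beta^\o$, since a morphism in $\ulMSm$ between fixed objects is determined by its interior.

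For the $\MV$-case, let $T$ be an $\MV$-square and $L \in \MSm$. Condition \eqref{thm:new-MV2} of Def.~\ref{def:new-MV} supplies an $\ulMV$-square $S$ with $S(11) \in \MSm$, an isomorphism $\phi : S(11) \xrightarrow{\sim} T(11)$ in $\MSm$, and a morphism $S \to T$ in $\ulMSm^\Sq$ inducing an isomorphism on interiors. Pulling compatible $\alpha_1, \alpha_2$ back to the corners of $S$ and invoking the $\ulMV$-case yields a unique $\beta' : S(11) \to L$ in $\ulMSm$; since $S(11), L \in \MSm$ and $\MSm \subset \ulMSm$ is full, $\beta'$ lies in $\MSm$. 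Transporting along $\phi$ gives the desired $\beta : T(11) \to L$ in $\MSm$, and the required compatibilities $\beta u_T = \alpha_1$, $\beta p_T = \alpha_2$ follow from the matching of interiors and the interior-determines-morphism principle.

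The main technical obstacle is the compatibility of the closure-of-graph and of the normalization with the \'etale base change $\pi$, since $\Gamma$ and $\Gamma^N$ need not be irreducible. Once these identifications are carefully established, by tracking generic points of irreducible components across the \'etale cover, the rest of the argument is a routine application of fpqc descent for properness and for effectiveness of Cartier divisors.
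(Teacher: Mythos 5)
Your proposal is correct and follows essentially the same route as the paper: reduce subcanonicity to the cocartesianness of distinguished squares (the paper's Lemma \ref{lem:subcan-cocart}), obtain the interior morphism from cocartesianness of elementary Nisnevich squares in $\Sm$, verify the modulus condition by descending the divisor inequality along the \'etale surjection $\ol{S}(10)\sqcup\ol{S}(01)\to\ol{S}(11)$ pulled back to the normalized graph, and deduce the $\MV$-case from the $\ulMV$-case via Condition \eqref{thm:new-MV2}. The only (cosmetic) difference is that where you invoke fppf descent of effectiveness of Cartier divisors and compatibility of closure and normalization with flat base change, the paper instead forms an auxiliary square over $\Gamma^N$ and applies the Krishna--Park lemma (Lemma \ref{lKL}) on surjections of normal schemes.
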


We need the following elementary observation. 

\begin{lemma}\label{lem:subcan-cocart}
Let $P$ be a complete and regular cd-structure on a category $\sC$.
Then the topology associated with $P$ is subcanonical if and only if every square $T \in P$ is cocartesian in $\sC$.
\end{lemma}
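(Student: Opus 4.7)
The plan is to apply the sheaf criterion for complete and regular cd-structures (the general form of Theorem \ref{thm:sheaf-criterion}, namely \cite[Corollary 2.17]{cdstructures}): a presheaf $F$ of sets on $\sC$ is a sheaf for the topology associated with $P$ if and only if $F(\emptyset)$ is a singleton and $F$ carries every square $T \in P$ to a cartesian square of sets. Subcanonicity is by definition the assertion that every representable presheaf $h_X = \Hom_\sC(-,X)$ is a sheaf, so the lemma reduces to transporting the cartesian/cocartesian duality through the Yoneda embedding.

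For the ``if'' direction, I would fix $X \in \sC$ and unwind the sheaf condition for $h_X$. The condition $h_X(\emptyset) = \{*\}$ follows from completeness of $P$: by \cite[Lemma 2.5]{cdstructures}, any morphism into $\emptyset$ is an isomorphism, so $\emptyset$ is initial in $\sC$ and $\Hom_\sC(\emptyset,X)$ is a singleton. The cartesian condition on $h_X(T)$ for each $T \in P$ is, by definition of $\Hom$, precisely the universal property that $T(11)$ is the pushout of $T(01) \leftarrow T(00) \rightarrow T(10)$, which holds if $T$ is cocartesian. Hence every $h_X$ is a sheaf, i.e.\ the topology is subcanonical.

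For the ``only if'' direction, I would run the same argument in reverse: if every $h_X$ is a sheaf, then for every $T \in P$ and every $X \in \sC$ the square $h_X(T)$ is cartesian in $\Sets$, which means exactly that the natural map
\[
\Hom_\sC(T(11),X) \to \Hom_\sC(T(10),X) \times_{\Hom_\sC(T(00),X)} \Hom_\sC(T(01),X)
\]
is a bijection for every $X$. By the Yoneda lemma this is the defining universal property of $T(11)$ as the pushout of the span $T(10) \leftarrow T(00) \rightarrow T(01)$, so $T$ is cocartesian in $\sC$. There is no genuine obstacle here; the statement is an essentially formal consequence of the sheaf criterion, and the only point requiring slight care is deducing $h_X(\emptyset) = \{*\}$ from the completeness axiom, which I would address by the initial-object argument above.
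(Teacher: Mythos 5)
Your proof is correct and follows essentially the same route as the paper: both reduce subcanonicity to the cartesianness of the squares $\Hom_\sC(T(-),X)$ via the sheaf criterion \cite[Cor. 2.17]{cdstructures}, and identify that condition with $T$ being cocartesian by Yoneda. One small quibble: ``every morphism into $\emptyset$ is an isomorphism'' does not by itself imply that $\emptyset$ is initial; but this is harmless, since in the cd-structure framework $\sC$ is assumed to have an initial object $\emptyset$, so $h_X(\emptyset)=\Hom_\sC(\emptyset,X)$ is a singleton for free.
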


\begin{proof}
Let $\sY$ denote the Yoneda embedding of $\sC$ into the category of presheaves on $\sC$.
All squares $T \in P$ are cocartesian in $\sC$ if and only if for any $T \in P$ and for any $X \in \sC$, the square 
\begin{equation}\label{eq:YT}\begin{gathered}\xymatrix{
\sY (X) (T(11)) \ar[r]^{u_T^\ast} \ar[d]_{p_T^\ast} & \sY (X) (T(10)) \ar[d]^{q_T^\ast} \\
\sY (X) (T(01)) \ar[r]^{v_T^\ast} & \sY (X) (T(00))
}\end{gathered}\end{equation}
is cartesian in $\sC$.
The latter condition is equivalent to that for any $X \in \sC$, the representable presheaf $\sY (X)$ is a sheaf for the topology associated with $P$ by \cite[Cor. 2.17]{cdstructures}.
This finishes the proof.
\end{proof}

We also need the following results:

\begin{lemma}[\protect{\cite[Lem. 2.2]{KP}}]\label{lKL} 
Let $f:X\to Y$ be a surjective morphism of normal integral schemes, 
and let $D,D'$ be two Cartier divisors on $Y$. 
If $f^*D'\le f^*D$, then $D'\le D$. \qed
\end{lemma}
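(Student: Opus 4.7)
The plan is to reduce to a codimension-one computation on $Y$: setting $E := D-D'$, the hypothesis becomes $f^{*}E \ge 0$ and the goal is $E \ge 0$. Since $Y$ is normal and integral, $E \ge 0$ is detected at each codimension-one point $\eta \in Y$, via the associated valuation $v_\eta$ on the DVR $\sO_{Y,\eta}$. So it suffices to show, for each such $\eta$, that $v_\eta(g) \ge 0$ for any local equation $g \in K(Y)^{\times}$ of $E$ around $\eta$.

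First I would observe that $f$ is automatically \emph{dominant}: since $f$ is surjective and $Y$ is irreducible, the image of the generic point $\eta_X$ of $X$ must be $\eta_Y$ (otherwise $f(X) = \overline{f(\eta_X)} \subsetneq Y$). In particular we get a field extension $K(Y) \hookrightarrow K(X)$, which is how the Cartier divisor $f^{*}E$ is described at the level of rational functions.

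Next I would fix $\eta \in Y$ of codimension one and produce a codimension-one point of $X$ lying over it. Using surjectivity, pick any $\xi_0 \in X$ with $f(\xi_0)=\eta$; the induced local homomorphism $\sO_{Y,\eta} \to \sO_{X,\xi_0}$ sends the uniformizer $\pi$ of $\sO_{Y,\eta}$ to some nonzero element. By Krull's Hauptidealsatz, every minimal prime $\fp$ of $\sO_{X,\xi_0}$ containing $\pi$ has height one, and pulling back along $\sO_{Y,\eta}\hookrightarrow\sO_{X,\xi_0}$ gives $\fp \cap \sO_{Y,\eta}=\fm_\eta$. Thus $\fp$ corresponds to a point $\xi\in X$ with $f(\xi)=\eta$ and $\sO_{X,\xi}=(\sO_{X,\xi_0})_{\fp}$ of Krull dimension one. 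Since $X$ is normal, $\sO_{X,\xi}$ is then a DVR, with valuation $v_\xi$.

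Finally I would compare valuations. The local ring map $\sO_{Y,\eta}\to\sO_{X,\xi}$ is a local homomorphism of DVRs, so there is a ramification index $e:=v_\xi(\pi) \ge 1$, and for every $g \in K(Y)^{\times}$ we have $v_\xi(g) = e\cdot v_\eta(g)$. Applying this to a local equation $g$ of $E$ at $\eta$ (which also serves as a local equation for $f^{*}E$ at $\xi$), the hypothesis $f^{*}E \ge 0$ yields $v_\xi(g)\ge 0$, hence $v_\eta(g) \ge 0$ because $e>0$. This gives $E \ge 0$ at every codimension-one point of $Y$, and therefore $D' \le D$. The one non-formal step, and the main potential obstacle, is Step~(c): extracting from an arbitrary preimage $\xi_0$ a genuinely codimension-one preimage $\xi$; the Hauptidealsatz together with the normality of $X$ is precisely what makes this work without any flatness or finiteness hypothesis on $f$.
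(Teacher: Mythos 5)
Your proof is correct. Note first that the paper contains no internal proof of this statement: it is quoted with a \qed from \cite[Lem.~2.2]{KP} (Krishna--Park), so there is no in-paper argument to compare against; your write-up in effect supplies the missing proof, and it takes the natural valuation-theoretic route. The reduction of $E:=D-D'\ge 0$ to codimension-one points is valid because a Noetherian normal domain is the intersection of its localizations at height-one primes; your dominance remark correctly makes $f^\ast$ meaningful on rational functions; and the genuinely non-formal step works exactly as you say: a minimal prime $\fp$ of $\sO_{X,\xi_0}$ over $\pi$ has height $\le 1$ by the Hauptidealsatz and height $\ge 1$ since $\pi\neq 0$ in a domain, the localization $\sO_{X,\xi}$ is a one-dimensional normal local domain, hence a DVR, and $\fp\cap\sO_{Y,\eta}$ is a nonzero prime of the DVR $\sO_{Y,\eta}$, hence equals $\fm_\eta$, so that $f(\xi)=\eta$ and the induced map of DVRs is local. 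Then $e=v_\xi(\pi)$ is finite (again $\pi\neq 0$) and $\ge 1$, and writing $g=u\pi^{n}$ with $u\in\sO_{Y,\eta}^{\times}$ gives $v_\xi(g)=e\,v_\eta(g)$, so effectivity descends. The one point worth flagging: both the Hauptidealsatz and the codimension-one criterion for effectivity require the local rings to be Noetherian, a hypothesis the lemma as stated does not literally contain; it is harmless here, since in this paper all schemes are separated of finite type over $k$ (and the same holds in the setting of \cite{KP}), but you should state it.
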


\begin{prop}\label{prop:cocartesian}
\ 
\begin{enumerate}
\item Any $\ulMV$-square is cocartesian in $\ulMSm$.
\item Any $\MV$-square is cocartesian in in $\ulMSm$, hence in $\MSm$.
\end{enumerate}
\end{prop}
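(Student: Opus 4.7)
The plan is to prove (1) by étale descent along the ambient Nisnevich cover, and then to deduce (2) as a formal consequence.

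For (1), I would first use the definition of $\ulMV$-square to replace $T$ by a $\ulMVfin$-square, so that $T$ is ambient with minimal edges and $\ol{T}$ is an elementary Nisnevich square in $\Sch$. Given $X \in \ulMSm$ and morphisms $\alpha\colon T(10) \to X$, $\beta\colon T(01) \to X$ agreeing on $T(00)$, the interior square $T^\o$ is an elementary Nisnevich square in $\Sm$, hence cocartesian by subcanonicity of the Nisnevich topology on $\Sm$. This produces a unique $\gamma^\o\colon T(11)^\o \to X^\o$ in $\Sm$ extending $\alpha^\o$ and $\beta^\o$; uniqueness of the corresponding extension in $\ulMSm$ is immediate since morphisms in $\ulMSm$ are determined by their interior part.

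The essential task is then to verify that $\gamma^\o$ is admissible. Let $\Gamma \subset \ol{T}(11) \times \ol{X}$ be the reduced closure of the graph of $\gamma^\o$, with normalization $\Gamma^N$; I need to check properness of $\Gamma \to \ol{T}(11)$ and the inequality $T(11)^\infty|_{\Gamma^N} \geq X^\infty|_{\Gamma^N}$. The strategy is étale descent along the Nisnevich cover $\ol{T}(10) \sqcup \ol{T}(01) \to \ol{T}(11)$. Since $\ol{T}(10) \hookrightarrow \ol{T}(11)$ is an open immersion, a direct topological argument identifies $\Gamma \times_{\ol{T}(11)} \ol{T}(10)$, with its reduced structure, as the graph closure $\Gamma_\alpha$ of $\alpha^\o$ in $\ol{T}(10) \times \ol{X}$. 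A component analysis, exploiting that $\Gamma$ is birational to $\ol{T}(11)$ over each component of $T(11)^\o$ and that étale base change only adds étale branches over generic points, then identifies $\Gamma \times_{\ol{T}(11)} \ol{T}(01)$ with $\Gamma_\beta$. Properness of $\Gamma \to \ol{T}(11)$ descends from the properness of $\Gamma_\alpha \to \ol{T}(10)$ and $\Gamma_\beta \to \ol{T}(01)$ (both given by admissibility of $\alpha, \beta$) by fpqc descent. For the modulus inequality, normalization commutes with étale base change, so $\Gamma_\alpha^N = \Gamma^N \times_{\ol{T}(11)} \ol{T}(10)$ and likewise for $\beta$; the admissibility inequalities on $\Gamma_\alpha^N$ and $\Gamma_\beta^N$, combined with the minimality identities $T(10)^\infty = \ol{u_T}^\ast T(11)^\infty$ and $T(01)^\infty = \ol{p_T}^\ast T(11)^\infty$, become the pullback along the étale surjection $\Gamma_\alpha^N \sqcup \Gamma_\beta^N \to \Gamma^N$ of the inequality we want on $\Gamma^N$. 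After reducing to integral components, Lemma \ref{lKL} concludes.

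For (2), let $T$ be an $\MV$-square. Definition \ref{def:new-MV}(2) supplies an $\ulMV$-square $S$ together with a morphism $S \to T$ in $\ulMSm^\Sq$ inducing isomorphisms $S^\o \cong T^\o$ and $S(11) \cong T(11)$. Given $\alpha\colon T(10) \to X$ and $\beta\colon T(01) \to X$ in $\ulMSm$ agreeing on $T(00)$, precomposition with the morphisms $S(ij) \to T(ij)$ yields compatible data $\alpha_S, \beta_S$ on $S$, which by (1) extends uniquely to $\gamma_S\colon S(11) \to X$. Transporting along the isomorphism $S(11) \cong T(11)$ gives $\gamma\colon T(11) \to X$; a check on interiors, using $S^\o \cong T^\o$ and the cocartesian property of $T^\o$ in $\Sm$, verifies that $\gamma$ restricts to $\alpha$ and $\beta$. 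Uniqueness follows as in (1), and cocartesianness in $\MSm$ follows from that in $\ulMSm$ because $\MSm \hookrightarrow \ulMSm$ is a full subcategory and all vertices of $T$ lie in $\MSm$.

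The main obstacle will be the scheme-theoretic descent in part (1): precisely identifying the étale pullback $\Gamma \times_{\ol{T}(11)} \ol{T}(01)$ with $\Gamma_\beta$, and carrying out the reduction to integral components needed to apply Lemma \ref{lKL} for the modulus inequality on $\Gamma^N$.
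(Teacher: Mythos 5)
Your argument is correct and follows the paper's proof in all essentials: reduce to a $\ulMVfin$-square, extend the unique map obtained from cocartesianness of the interior elementary Nisnevich square, descend the properness and the modulus inequality along the étale surjection onto the normalization of the graph closure using Lemma \ref{lKL}, and deduce (2) by transporting along the auxiliary $\ulMV$-square $S \to T$ exactly as the paper does. The only organizational difference is that the paper base-changes the whole square along $\Gamma^N \to \ol{S}(11)$ to form a new $\ulMVfin$-square $S_1$ on which all rational maps become genuine morphisms of normal schemes (so admissibility of the composites $S_1(10)\to M$, $S_1(01)\to M$ can be invoked directly), whereas you pull the graph closure back along the cover and must separately justify the identifications $\Gamma\times_{\ol{T}(11)}\ol{T}(01)\cong\Gamma_\beta$ and $\Gamma^N\times_{\ol{T}(11)}\ol{T}(01)\cong\Gamma_\beta^N$ --- both true (components of an étale scheme over an integral base dominate it, and normalization commutes with étale base change), but this is extra work that the paper's packaging avoids.
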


\begin{proof}
(1): Let $S$ be an $\ulMV$-square. 
We may assume that $S$ is an $\ulMVfin$-square since cocartesian-ness is stable under isomorphisms. 
Let $S(10) \to M$ and $S(01) \to M$ be morphisms in $\ulMSm$ which coincide after restricted to $S(00)$.
Since $S^\o$ is an elementary Nisnevich square, it is cocartesian in $\Sm$.
Therefore, the morphisms  $S(10)^\o \to M^\o$ and $S(01)^\o \to M^\o$ induce a unique morphism $h^\o : S(11)^\o \to M^\o$.
It suffices to check that $h^\o$ induces a morphism $S(11) \to M$ in $\ulMSm$.

Let $\Gamma$ be the graph of the rational map $\ol{S}(11) \dashrightarrow \ol{M}$, and let $\Gamma^N \to \Gamma$ be the normalization. 
For any $(ij) \in \Sq$, set
\[
S_1 (ij) := (\ol{S}(ij) \times_{\ol{S}(11)} \Gamma^N , S^\infty (ij) \times_{\ol{S}(11)} \Gamma^N).
\]
The minimal morphisms $S_1(ij) \to S_1(kl)$ are induced by $S(ij) \to S(kl)$ for all $(ij) \to (kl)$ in $\Sq$, and they form an $\ulMVfin$-square $S_1$.
Moreover, $S_1(ij)$ are normal for all $(ij) \in Sq$, and the composites 
\[
\ol{h}_{ij} : \ol{S}_1 (ij) \to \ol{S} (11) \dashrightarrow \ol{M}
\]
are morphisms of schemes for all $(ij) \in \Sq$ by construction. 
Moreover, the morphisms $\ol{S}_1(ij) \to \ol{S}(ij)$ are proper (by the properness of $\Gamma$ over $\ol{S}(11)$).
Therefore, by the minimality of $S_1(ij) \to S(ij)$, the morphism $S_1 \to S$ is an isomorphism in $\ulMSm^\Sq$.

\begin{claim}\label{claim:3.4.5}
$S_1^\infty (11) \geq \ol{h}_{11}^\ast M^\infty$.
\end{claim}

\begin{proof}
The admissibilities of $S(10) \to M$ and $S(01) \to M$ implies those of $S_1 (10) \to M$ and $S_1 (01) \to M$.
Since $\ol{S}_1(10)$ and $\ol{S}_1(01)$ are normal, we have 
\[
S_1 (ij)^\infty \geq \ol{h}_{ij}^\ast M^\infty 
\]
for $(ij)=(10), (01)$.
Since $\ol{S}_1(10) \sqcup \ol{S}_1 (01) \to \ol{S}_1 (11)$ is a surjection between normal schemes and since $S_1 (10) \to S_1(11)$ and $S_1(01) \to S_1(11)$ are minimal, Lemma \ref{lKL} implies 
\[
S_1 (11)^\infty \geq \ol{h}_{11}^\ast M^\infty .
\]
This finishes the proof.
\end{proof}

By Claim \ref{claim:3.4.5}, we have a morphism $S_1(11) \to M$ in $\ulMSm^\fin$.
The composite $S(11) \xleftarrow{\sim} S_1(11) \to M$ gives the desired morphism.
The uniqueness of the morphism follows from the fact that the elementary Nisnevich square $S^\o$ is cocartesian in $\Sm$.
This finishes the proof of (1).

(2): Let $T$ be an $\MV$-square. 
Then Condition (2) of Definition \ref{eq:square-T} shows that there exists an $\ulMV$-square $S$ and a morphism $S \to T$ in $\ulMSm^\Sq$ such that $S(11) \cong T(11)$.
Let $f : T(10) \to M$ and $g : T(01) \to M$ be morphisms in $\ulMSm$ which coincide after restricted to $T(00)$.
Then the composites 
\[
f_S : S(10) \to T(10) \to T(11) , \ \ g_S : S(01) \to T(01) \to T(11)
\]
coincide after restricted to $S(00)$.
Then $f_S$ and $g_S$ induce a unique morphism $h : T(11) \cong S(11) \to M$ since $S$ is cocartesian in $\ulMSm$ by (1).
Since $S^\o \cong T^\o$, we have $h \circ u_T = f$ and $h \circ p_T = g$.
This finishes the proof of Proposition \ref{prop:cocartesian}.
\end{proof}

\begin{proof}[Proof of Theorem \ref{thm:subcanonicality}]
This follows from Lemma \ref{lem:subcan-cocart} and Proposition \ref{prop:cocartesian} (1) and (2).
This finishes the proof.
\end{proof}

\section{Mayer-Vietoris sequence}\label{section:Mayer-Vietoris}

\subsection{Easy Mayer-Vietoris}

\begin{defn}\label{def:pre-yoneda}
For any sheaf $F$ on a site $\sC$, we denote by $\Z F$ the sheaf associated with the presheaf $\sC \ni X \mapsto \Z (F(X))$, where for any set $S$, we denote by $\Z S$ the free abelian group generated on $S$.

For any $M \in \ulMSm$ (resp. $\MSm$), we set $\Z (M) := \Z \sY (M)$, where $\sY (M)$ denotes the presheaf of sets represented by $M$.
\end{defn}

\begin{thm}\label{thm:easy-MV}
Let $T$ be an $\MV$-square. 
Then the complex of sheaves on $\MSm$
\[
0 \to \Z (T(00)) \to \Z (T(10)) \oplus \Z (T(01)) \to \Z (T(11)) \to 0
\]
is exact. 
\end{thm}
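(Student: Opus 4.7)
The plan is to reduce the short exactness to two separate verifications: (I) injectivity of $\Z(T(00)) \to \Z(T(10)) \oplus \Z(T(01))$, and (II) identification of $\Z(T(11))$ with the cokernel of that map. Step (II) automatically yields exactness both in the middle and at the right, so the two together complete the proof.

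For (I), Condition \eqref{thm:new-MV2} of Definition \ref{def:new-MV} forces $T^\o$ to be an elementary Nisnevich square, so $v_T^\o$ is an open immersion, hence a monomorphism in $\Sm$. Since a morphism in $\MSm$ is determined by its underlying interior morphism (Definition \ref{def:mod-pair}), $v_T$ is a monomorphism in $\MSm$, and therefore $\sY(v_T)$ is a monomorphism of presheaves of sets. Applying the free-abelian-group functor pointwise and then sheafifying---both of which preserve monomorphisms---yields a monomorphism $\Z(v_T) \colon \Z(T(00)) \to \Z(T(01))$ of abelian sheaves. Up to a sign this agrees with the composition of the map in question with the projection $\Z(T(10)) \oplus \Z(T(01)) \to \Z(T(01))$, which forces the map in question to be injective.

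For (II), Theorem \ref{thm:sheaf-criterion} says that for every sheaf of sets $F$ on $\MSm$ the natural map $F(T(11)) \to F(T(10)) \times_{F(T(00))} F(T(01))$ is a bijection. Using subcanonicality of the $\MV$-topology (Theorem \ref{thm:subcanonicality}) together with the Yoneda lemma, this precisely says that $\sY(T(11))$ is the pushout of $\sY(T(10)) \leftarrow \sY(T(00)) \to \sY(T(01))$ in the category of sheaves of sets on $\MSm$. The free-abelian-sheaf functor $\Z(-)$ is left adjoint to the forgetful functor from abelian sheaves to sheaves of sets, and hence commutes with pushouts, so $\Z(T(11))$ is the pushout of the analogous diagram in abelian sheaves. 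In any abelian category a pushout along a cospan is the cokernel of the associated difference map, so
\[
\Z(T(11)) \cong \Coker\bigl( (q_T, -v_T) \colon \Z(T(00)) \to \Z(T(10)) \oplus \Z(T(01)) \bigr).
\]

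I do not foresee any serious difficulty: the only non-formal input is the sheaf criterion of Theorem \ref{thm:sheaf-criterion}, itself resting on completeness (Theorem \ref{thm:completeness}) and regularity (Theorem \ref{thm:regularity}) of the cd-structure $P_{\MV}$; everything else is routine manipulation of sheafification, the free-abelian-sheaf functor, and left adjoints. Conceptually the proof simply records that each $\MV$-square $T$ realises $T(11)$ as a pushout both in $\MSm$ (Proposition \ref{prop:cocartesian}) and in the associated sheaf topos, from which the abelian Mayer--Vietoris sequence is an immediate formal consequence.
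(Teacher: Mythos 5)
Your proof is correct and is essentially the paper's argument: the paper simply cites \cite[Lemma 2.18]{cdstructures} together with regularity (Theorem \ref{thm:regularity}) and subcanonicality (Theorem \ref{thm:subcanonicality}), and your write-up is an unpacking of exactly that citation. The two non-formal inputs coincide --- the sheaf criterion (from completeness and regularity) exhibits $\sY(T(11))$ as a pushout of sheaves of sets, which the left adjoint $\Z(-)$ converts into the cokernel statement, while the open immersion $v_T^\o$ gives injectivity on the left --- so there is nothing to add.
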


\begin{proof}
This follows from \cite[Lemma 2.18]{cdstructures}, Theorem \ref{thm:regularity} and Theorem \ref{thm:subcanonicality}.
\end{proof}

\subsection{Mayer-Vietoris with transfers}

\begin{thm}\label{thm:exactness}
Let $T \in \MSm^\Sq$.
Assume that $T^\o$ is an elementary Nisnevich square, and that $T$ satisfies Conditions \eqref{thm:new-MV1} and \eqref{thm:new-MV3} in Def. \ref{def:new-MV}.
Recall the following notation from Def. \ref{def:new-MV}:
\begin{equation}\label{eq:T} \vcenter{\xymatrix{
T(00) \ar[r]^{v_T} \ar[d]_{q_T} & T(01) \ar[d]^{p_T} \\
T(10) \ar[r]^{u_T} & T(11).
}} \end{equation}

Then, for any $M \in \ulMSm$, the following complex of abelian groups is exact:
\begin{multline}\label{eq:middle-exactness-tr}
0 \to \Z_\tr (T(00))(M) \xrightarrow{(q_{T_\ast} , v_{T\ast})}  \Z_\tr (T(10))(M) \oplus  \Z_\tr (T(01))(M) \\ \xrightarrow{p_{T\ast } - u_{T\ast}}  \Z_\tr (T(11))(M).
\end{multline}
\end{thm}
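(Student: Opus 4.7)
The plan is to reduce the claim to a cycle-theoretic statement about elementary correspondences and their push-forwards, and then use Proposition \ref{prop:fb-fc} to recover $\gamma$ from compatible pairs $(\alpha,\beta)$.

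First I would dispose of the easy parts. Since $T^\o$ is an elementary Nisnevich square, the horizontals $u_T^\o,v_T^\o$ are open immersions, the verticals $p_T^\o,q_T^\o$ are \'etale, and $p_T^\o$ restricts to an isomorphism $Z_{01}:=T(01)^\o\setminus T(00)^\o \iso Z_1:=T(11)^\o\setminus T(10)^\o$ on the reduced closed complements. Because $u_T^\o$ and $v_T^\o$ are open immersions, $u_{T*}$ and $v_{T*}$ send distinct elementary correspondences to distinct ones with multiplicity one, so both are injective on the corresponding free abelian groups; this yields injectivity of $(q_{T*},v_{T*})$, and the vanishing of $(p_{T*}-u_{T*})\circ(q_{T*},v_{T*})$ follows from functoriality.

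For the middle exactness, let $(\alpha,\beta)$ satisfy $u_{T*}\alpha=p_{T*}\beta$ and write $\beta=\sum_j b_j[V_j]$ with distinct elementary $V_j\in\ulMCorel(M,T(01))$. I would split $\beta=\beta_0+\beta_1$ according to whether $V_j\subset M^\o\times T(00)^\o$ (an open subset of $M^\o\times T(01)^\o$); by irreducibility, each $V_j$ contributing to $\beta_1$ satisfies $V_j\subset M^\o\times Z_{01}$. Split $\ulMCorel(M,T(11))=A\sqcup B$ analogously (containment in $M^\o\times T(10)^\o$ versus $M^\o\times Z_1$). A direct inspection gives $u_{T*}\alpha,\ p_{T*}\beta_0\in\Z A$ and $p_{T*}\beta_1\in\Z B$, so the hypothesis forces $p_{T*}\beta_1=0$. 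Since $\id_{M^\o}\times p_T^\o$ restricts to an isomorphism $M^\o\times Z_{01}\iso M^\o\times Z_1$, the terms in $p_{T*}\beta_1$ are distinct and have multiplicity one, so $\beta_1=0$. Hence $\beta=\beta_0$ is supported on $V_j\subset M^\o\times T(00)^\o$. Proposition \ref{prop:fb-fc} applies (since $T$ is a pull-back in $\ulMSm$ by \eqref{thm:new-MV1} and Remark \ref{rem:tau-fb}, and $v_T^\o$ is an immersion) and lifts each such $V_j$ uniquely to $\gamma_j\in\ulMCorel(M,T(00))$ with $v_{T+}(\gamma_j)=V_j$ and $q_{T+}(\gamma_j)=p_{T+}(V_j)$. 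Setting $\gamma:=\sum_j b_j[\gamma_j]$, we get $v_{T*}\gamma=\beta$ by construction and $u_{T*}q_{T*}\gamma=p_{T*}v_{T*}\gamma=u_{T*}\alpha$, whence $q_{T*}\gamma=\alpha$ by injectivity of $u_{T*}$.

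The main obstacle will be the multiplicity bookkeeping under the \'etale push-forward $p_{T*}$: a priori this multiplies an elementary correspondence by the degree of $V\to p_{T+}(V)$, which need not be one, and naive support tracking would leave a degree-theoretic ambiguity. What rescues the argument is the elementary Nisnevich condition $p_T^\o|_{Z_{01}}\iso Z_1$, which on exactly the ``dangerous'' part of $\beta$ forces degree one. Admissibility of the lifts $\gamma_j$ as genuine elements of $\ulMCorel(M,T(00))$ (rather than mere cycles) is handled inside Proposition \ref{prop:fb-fc} via the canonical model of fiber product and the supremum-of-divisors computation of Lemma \ref{lem:univ-sup}.
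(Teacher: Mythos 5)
Your overall strategy --- reduce to a statement about elementary correspondences and use Proposition \ref{prop:fb-fc} to lift compatible pairs --- is the same as the paper's, and the injectivity and complex-property parts are fine. There is, however, a genuine gap in the middle-exactness step, plus a smaller repairable error. The small one first: the dichotomy ``either $V_j\subset M^\o\times T(00)^\o$ or $V_j\subset M^\o\times Z_{01}$'' is false. An irreducible closed subset not contained in the open set $M^\o\times T(00)^\o$ need not lie in the closed complement $M^\o\times Z_{01}$; it may meet both. The correct split is by the position of the \emph{generic point} of $V_j$, and with that correction your argument that $p_{T*}\beta_1=0$ and hence $\beta_1=0$ does go through (the components of $u_{T*}\alpha$ and of $p_{T*}\beta_0$ have generic point in $M^\o\times T(10)^\o$, while those of $p_{T*}\beta_1$ are contained in $M^\o\times Z_1$, so they cannot coincide).

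The fatal gap is in the treatment of $\beta_0$. To invoke Proposition \ref{prop:fb-fc} for a component $V_j$ you need a partner $W_j\in\ulMCorel(M,T(10))$ with $u_{T+}(W_j)=p_{T+}(V_j)$, i.e.\ you need $p_{T+}(V_j)$ to be \emph{admissible as a correspondence to $T(10)$}: the modulus condition must hold against $T(10)^\infty$, which is in general strictly larger than the pullback of $T(11)^\infty$, so this is not a consequence of $p_{T+}(V_j)\in\ulMCorel(M,T(11))$ nor of any support condition. The only way to get it is to know that $|p_{T+}(V_j)|$ occurs with nonzero coefficient in $p_{T*}\beta=u_{T*}\alpha$ and is therefore a component of the admissible cycle $\alpha$. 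But the \'etale map $p_T^\o$ is not injective over $T(10)^\o$, so distinct components $V_j\neq V_{j'}$ of $\beta_0$ can have the same image support, and their contributions (weighted by the degrees of $V_j\to |p_{T+}(V_j)|$, which need not be $1$ here) can cancel in $p_{T*}\beta$; in that case you obtain no admissibility information and Proposition \ref{prop:fb-fc} cannot be applied. This is exactly the case the paper isolates as the index set $J$ and handles with Lemma \ref{lem:resurgence}: a colliding pair lifts to $T(01)\times_{T(11)}T(01)\cong T(01)\sqcup\OD(p_T)$, lands in the $\OD(p_T)$ summand because the two components are distinct, and is then transported into $T(00)\times_{T(10)}T(00)$ via Condition \eqref{thm:new-MV3}, $\OD(q_T)\cong\OD(p_T)$. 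Your proof never uses Condition \eqref{thm:new-MV3}, which is a non-redundant hypothesis of the theorem; that is the tell-tale sign that this case has been lost.
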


\begin{proof}
The assertion is equivalent to that the commutative square
\[\xymatrix{
\ulMCor (M,T(00)) \ar[r]^{v_{T\ast}} \ar[d]_{q_{T\ast}} & \ulMCor (M,T(01)) \ar[d]^{p_{T\ast}} \\
\ulMCor (M,T(10)) \ar[r]^{u_{T\ast}} & \ulMCor (M,T(11))
}\]
is cartesian. 
Note that the horizontal maps are injective. 

A key observation is the following lemma.
Recall the notation from Proposition \ref{prop:functor-elem}.

\begin{lemma}\label{lem:resurgence}
Let $\alpha_1, \alpha_2 \in \ulMCorel (M,T(01))$ be elementary correspondences with $\alpha_1 \neq \alpha_2$.
Assume that $p_{T+} (\alpha_1) = p_{T+} (\alpha_2)$ holds in  $\ulMCorel (M,T(11))$.
Then $\alpha_1$ and $\alpha_2$ belong to the image of $v_{T\ast}$.
\end{lemma}

\begin{proof}
Set $P := T(01) \times_{T(11)} T(01)$, and consider the commutative diagram
\[\xymatrix{
\ulMCorel (M, P) \ar[r]^{\pr_{1+}} \ar[d]^{\pr_{2+}} & \ulMCorel (M, T(01)) \ar[d] \\
\ulMCorel (M, T(01)) \ar[r] & \ulMCorel (M, T(11))
}\]
in $\Set$.
By Proposition \ref{prop:fb-fc}, there exists an element $\gamma \in \ulMCorel (M,P)$ such that $\pr_{1+} (\gamma) = \alpha_1$ and $\pr_{2+} (\gamma) = \alpha_2$.

We have a canonical identification \[\ulMCorel (M,P) \cong \ulMCorel (M,T(01)) \sqcup \ulMCorel (M,\OD (p_T))\] induced by $P \cong T(01) \sqcup \OD (p_T)$.
Through this identification, we regard $\ulMCorel (M,\OD (p_T ))$ as a subset of $\ulMCorel (M,P)$.

\begin{claim}
$\gamma \in \ulMCorel (M,\OD (p_T ))$.
\end{claim}

\begin{proof}
Let $\xi_1, \xi_2$ and $\zeta$ be the generic points of $\alpha_1, \alpha_2$ and $\gamma$.
Then $\zeta$ lies over $\xi_1$ and $\xi_2$.
Since $\xi_1 \neq \xi_2$ by the assumption that $\alpha_1 \neq \alpha_2$, we have $\zeta \notin M^\o \times \Delta (T(01)^\o)$, where $\Delta (T(01)^\o)$ denotes the image of $\Delta : T(01)^\o \to T(01)^\o \times_{T(11)^\o} T(01)^\o$.
This implies that $\zeta \in M^\o \times \OD (p_T)^\o$.
Therefore, we have 
\[
\gamma \in \Cor (M^\o , \OD (p_T)^\o ) \cap \ulMCorel (M,P) = \ulMCorel (M,\OD (p_T)).
\]
This finishes the proof of the claim.
\end{proof}

By construction, we have $\alpha_i = \pr_{i} (\gamma) = |(\pr_{i})_{\ast} (\gamma)|$, where $\pr_i : T(01)^\o \times_{T(11)^\o} T(01)^\o \to T(01)^\o$, $i=1,2$, are the projections. 
Therefore, in order to prove $\alpha_i \in \ulMCor (M,T(00))$ for $i=1,2$, it suffices to prove that $\gamma \in \ulMCor (M,T(00) \times_{T(10)} T(00))$.
Since $\gamma \in \ulMCor (M,\OD (p_T))$ by the above claim, and since $\OD (q_T) \cong \OD(p_T)$ by Condition \eqref{thm:new-MV3} of Definition \ref{def:new-MV}, we have $\gamma \in \ulMCor (M,\OD(q_T)) \subset \ulMCor (M,T(00) \times_{T(10)} T(00))$. 
This finishes the proof of Lemma \ref{lem:resurgence}.
\end{proof}

Now we are ready to prove that the above square is cartesian.
Let $\alpha \in \ulMCor (M,T(01))$ and assume $p_{T\ast} (\alpha) \in \ulMCor (M,T(10))$. 
Write $\alpha = \sum_{i \in I} m_i \alpha_i$, where $I$ is a finite set, $m_i \in \Z - \{0\}$ and $\alpha_i$  are elementary correspondences which are distinct from each other. 
Then we have $\alpha_i \in \ulMCor (M,T(01))$ for all $i \in I$. 
Set 
\[
J := \{i \in I \ | \ \exists j \in I - \{i\}, \ |p_{T\ast} (\alpha_i)| = |p_{T\ast} (\alpha_{j})| \}.
\]
Then, by Lemma \ref{lem:resurgence}, we have $\alpha_i \in \ulMCor (M,T(00))$ for all $i \in J$. 
Let $i \in I - J$, and set $\beta := |p_{T\ast} (\alpha_i)|$. 
Then the coefficient of $\beta$ in $p_{T\ast} (\alpha)$ is non-zero, and therefore $\beta \in \ulMCorel (M,T(10))$. 
By Proposition \ref{prop:fb-fc}, there exists a unique element $\gamma \in \ulMCorel (M,T(00))$ such that $v_{T+} (\gamma) = \alpha_i$ and $q_{T+} (\gamma) = \beta$. 
Since $T(00)^\o \to T(01)^\o$ is an open immersion, this implies that $\alpha_i = \gamma \in \ulMCorel (M,T(00))$. 
This finishes the proof of the exactness of \eqref{eq:middle-exactness-tr}.
\end{proof}

Recall from \cite[Th. 2 (2)]{modsheaf1} that for any $M \in \ulMSm$, the presheaf $\Z_\tr (M)$ on $\ulMSm$ is a sheaf for the $\ulMV$-topology.

\begin{corollary}\label{cor:Mayer-Vietoris}
Let $T$ be an $\MV$-square. 
Then the following complex of sheaves on $\ulMSm$ for the $\ulMV$-topology is exact:
\begin{multline}\label{eq:exactness-tr}
0 \to \Z_\tr (T(00))\xrightarrow{(q_{T_\ast} , v_{T\ast})}  \Z_\tr (T(10))\oplus  \Z_\tr (T(01))\\ \xrightarrow{p_{T\ast } - u_{T\ast}}  \Z_\tr (T(11)) \to 0.
\end{multline}
\end{corollary}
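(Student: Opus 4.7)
My plan is to split the exactness claim into three parts: injectivity of $(q_{T*}, v_{T*})$, exactness at the middle term, and surjectivity of $p_{T*} - u_{T*}$ as a morphism of $\ulMV$-sheaves. The first two parts follow immediately from Theorem~\ref{thm:exactness}. Indeed, any $\MV$-square $T$ automatically satisfies the hypotheses of Theorem~\ref{thm:exactness}: Conditions \eqref{thm:new-MV1} and \eqref{thm:new-MV3} of Def.~\ref{def:new-MV} hold by definition, and $T^o$ is an elementary Nisnevich square by Def.~\ref{def:new-MV}~\eqref{thm:new-MV2}. Consequently, the complex truncated by removing the final ``$\to 0$'' is already exact as a sequence of presheaves on $\ulMSm$, and since $\ulMV$-sheafification is exact, both injectivity and middle exactness survive after sheafification.

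For the remaining surjectivity, my first approach is to leverage the associated $\ulMV$-square $S$ provided by Def.~\ref{def:new-MV}~\eqref{thm:new-MV2}. There is a morphism $S \to T$ in $\ulMSm^\Sq$ with $S(11) \cong T(11)$ in $\MSm$ and $S^o \cong T^o$ in $\Sm^\Sq$. Granting the analogous Mayer-Vietoris surjectivity for $\ulMV$-squares (which is part of the content of the companion \cite{modsheaf1}, and which can in any case be obtained by applying the proof strategy of Theorem~\ref{thm:exactness} verbatim to $\ulMV$-squares, since Prop.~\ref{prop:fb-fc} and the off-diagonal machinery work uniformly in $\ulMSm$), one sees that $\Z_\tr(S(10)) \oplus \Z_\tr(S(01)) \to \Z_\tr(S(11))$ is an epimorphism of $\ulMV$-sheaves. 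Since this map factors as
\[
\Z_\tr(S(10)) \oplus \Z_\tr(S(01)) \longrightarrow \Z_\tr(T(10)) \oplus \Z_\tr(T(01)) \longrightarrow \Z_\tr(T(11)) = \Z_\tr(S(11)),
\]
the second arrow is then itself an epimorphism of sheaves, as required.

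If the $\ulMV$-case is not directly usable, the alternative is to argue sheaf-theoretically: given $M \in \ulMSm$ and an elementary $\alpha \in \ulMCorel(M, T(11))$, one must produce a $\ulMV$-covering $\{M_j \to M\}$ on which each $\alpha|_{M_j}$ lies in the image of $p_{T*} - u_{T*}$. The Nisnevich Mayer-Vietoris for $T^o$ on smooth schemes shows that, locally on $M^o$, the correspondence $\alpha^o$ splits across $T(10)^o$ and $T(01)^o$. One then has to promote the resulting Nisnevich cover of $M^o$ to a $\ulMV$-cover of $M$ whose ambient modulus divisors are compatible with the admissibility of the lifted correspondences. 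I expect this lifting step to be the main obstacle: it forces a careful use of Lemma~\ref{lem:univ-sup} and the canonical model of fiber product (Def.~\ref{def:canonical-model}) to control the pullback of $M^\infty$ along the normalized graphs, and to match the modulus structures of $T(10), T(01), T(00)$ coherently under the admissibility constraints. The conceptual bridge between the $\MV$-topology on $\MSm$, the $\ulMV$-topology on $\ulMSm$, and the transfer structure on correspondences is where the genuine subtlety of the argument concentrates, and the shortcut through the associated $\ulMV$-square $S$ is what makes the corollary a clean consequence of Theorem~\ref{thm:exactness} rather than a difficult standalone calculation.
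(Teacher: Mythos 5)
Your proposal is correct and follows essentially the same route as the paper: reduce to surjectivity of $p_{T\ast}-u_{T\ast}$ via Theorem \ref{thm:exactness}, then obtain that surjectivity from the associated $\ulMV$-square $S$ of Definition \ref{def:new-MV} \eqref{thm:new-MV2}, the known epimorphism $\Z_\tr(S(10))\oplus\Z_\tr(S(01))\to\Z_\tr(S(11))$ from \cite{modsheaf1}, and the factorization through $\Z_\tr(T(10))\oplus\Z_\tr(T(01))$. Your ``alternative'' sheaf-theoretic argument is unnecessary, exactly as you suspected.
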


\begin{proof}
By Theorem \ref{thm:exactness}, it suffices to prove the surjectivity of the last maps of the complexes. 
Take a morphism $S \to T$ in $\ulMSm^\Sq$ as in \eqref{thm:new-MV2} of Definition \ref{def:new-MV}.
Then the map
\[
\Z_\tr (S(10)) \oplus \Z_\tr (S(01)) \to \Z_\tr (S(11)) = \Z_\tr (T(11)) 
\]
is epi in $\ulMNST$ by \cite[Theorem 4.5.7]{modsheaf1}.
Since the map factors through 
\[
\Z_\tr (T(10)) \oplus \Z_\tr (T(01)),\] we are done. 
\end{proof}

\end{document}